\newtheorem{theorem}{Theorem}[section]
\newtheorem*{theorem*}{Theorem}
\newtheorem{lemma}[theorem]{Lemma}
\newtheorem{prop}[theorem]{Proposition}
\theoremstyle{definition}
\newtheorem{definition}{Definition}
\newtheorem{question}{Question}
\newtheorem*{remark}{Remark}
\newtheorem*{definition*}{Definition}
\newtheorem*{corollary*}{Corollary}
\newtheorem{theostar}{Theorem}
\newcommand{\showcomments}{yes}
\renewcommand{\showcomments}{no}
\newsavebox{\commentbox}
\newcommand{\eps}{\varepsilon}
\newcommand{\wdt}[1]{\widetilde #1}
\newcommand{\ovl}[1]{\overline #1}
\newcommand{\pl}{\partial}
\newcommand{\bs}{\backslash}
\newcommand{\vol}{\operatorname{vol}}
\newcommand{\tr}{\operatorname{tr}}
\newcommand{\aut}{\operatorname{Aut}}
\newcommand{\fix}{\mathrm{Fix}}
\newcommand{\aff}{\operatorname{Aff}}
\newcommand{\PSL}{\mathrm{PSL}}
\newcommand{\SL}{\mathrm{SL}}
\newcommand{\SU}{\mathrm{SU}}
\newcommand{\SO}{\mathrm{SO}}
\newcommand{\OO}{\mathrm{O}}
\newcommand{\GL}{\mathrm{GL}}
\newcommand{\A}{\mathrm{A}}
\newcommand{\Sp}{\mathrm{Sp}}
\renewcommand{\H}{\mathrm{H}}
\newcommand{\sol}{\mathbf{Sol}}
\newcommand{\frH}{\mathfrak{H}}
\newcommand{\frN}{\mathfrak{N}}
\newcommand{\ad}{\mathrm{Ad}}
\newcommand{\abs}{\mathrm{abs}}
\newcommand{\rel}{\mathrm{rel}}
\newcommand{\res}{\mathrm{res}}
\newcommand{\isom}{\mathrm{Isom}}
\newcommand{\TT}{\mathbb T}
\newcommand{\CC}{\mathbb C}
\newcommand{\RR}{\mathbb R}
\newcommand{\ZZ}{\mathbb Z}
\newcommand{\HH}{\mathbb H}
\newcommand{\PP}{\mathbb P}
\newcommand{\QQ}{\mathbb Q}
\DeclareMathOperator{\ind}{ind}
\DeclareMathOperator{\sh}{sh}
\DeclareMathOperator{\arcosh}{arcosh}
\DeclareMathOperator{\diag}{diag}
\newcommand{\normal}{\trianglelefteq}
\DeclareMathOperator{\gal}{Gal}
\newcommand{\hyp}{\nobreakdash-\hspace{0pt}}
\newcommand{\3}[1]{3\hyp}
\providecommand{\Q}{{\mathbb Q}}
\providecommand{\R}{{\mathbb R}}
\providecommand{\C}{{\mathbb C}}
\providecommand{\Z}{{\mathbb Z}}
\newcommand{\cO}{{\mathcal O}}
\newcommand{\cQ}{{\mathcal Q}}
\newcommand{\cT}{{\mathcal T}}
\newcommand{\OK}{\cO_K}
\newcommand{\OD}{\cO_D}
\newcommand{\Opi}{\cO_\pi}
\renewcommand{\a}{\alpha}
\newcommand{\maps}{\colon\thinspace}
\newcommand{\tauC}{\tau_\C}
\newcommand{\tauR}{\tau_\R}
\newcommand{\real}{\mathrm{Re}}
\newcommand{\leftquom}[4]{{\raisebox{-#3}{$#1$}}#4\backslash{\raisebox{#3}{$#2$}}}
\newcommand{\hypquo}[1]{\leftquom{#1}{\H^3}{0.5pt}{\big}}
\DeclareMathOperator{\Isom}{Isom}
\renewcommand{\H}{{\mathbb{H}}}
\renewcommand{\abs}[1]{{\left| #1 \right|}}
\newcommand{\somethingdef}[4]{{\left#1 {#2} \ \left| \ {#3} \vphantom{#2} \right. \right#4}}
\newcommand{\setdef}[2]{\somethingdef{\{}{#1}{#2}{\}}}
\newcommand{\spandef}[2]{\somethingdef{\langle}{#1}{#2}{\rangle}}
\newcommand{\mtext}[1]{\quad \mbox{#1} \quad}
\newcommand{\kernoverline}[3]{{\mkern #1mu\overline{\mkern-#1mu #2\mkern-#3mu}\mkern#3mu}}
\newcommand{\abar}{{\kernoverline{1.5}{a}{0}}}
\newcommand{\bbar}{{\kernoverline{1.0}{b}{-1}}}
\renewcommand{\rel}[1]{\mathit{#1}}
\newcommand{\defgen}[2]{#1 = \mathit{#2}}
\newlength{\pflinesep}
\newcommand{\pfline}[2]{\hspace{-1.0cm}\hspace{#1\pflinesep}#2\par}
\newcommand{\pfassume}[2]{\pfline{#1}{Case $#2 \in P$:}}
\newcommand{\pfassumefinal}[2]{\pfline{#1}{Case $#2 \in P$:  Then $P$ contains the following, which is 1 in $\Gamma$:}}
\newcommand{\pfcontradiction}[2]{\pfline{#1}{$#2$}}
\author[S. Kionke, J. Raimbault]{Steffen Kionke, Jean Raimbault \\ With an appendix by Nathan Dunfield}
  \address{Heinrich-Heine-Universit\"at D\"usseldorf \\ Mathematisches Institut \\ Universit\"atsstr.~1 \\ 40225 D\"usseldorf \\ Germany}
  \email{steffen.kionke@uni-duesseldorf.de} 
  \urladdr{\url{http://www-public.rz.uni-duesseldorf.de/~kionke/}}
  \address{Institut de Math\'ematiques de Toulouse ; UMR5219 \\ Universit\'e de Toulouse ; CNRS \\ UPS IMT, F-31062 Toulouse Cedex 9, France}
  \email{Jean.Raimbault@math.univ-toulouse.fr}
  \urladdr{\url{http://www.math.univ-toulouse.fr/~jraimbau/}}
  \address{ Department of Mathematics, MC-382 \\
          University of Illinois \\
          1409 West Green Street \\
          Urbana, IL 61801 \\ 
          United States}
  \email{nathan@dunfield.info}
  \urladdr{\url{http://dunfield.info}}
\subjclass[2010]{Primary 22E40; Secondary 57M07, 20F65}
\title{On geometric aspects of diffuse groups}
\begin{document}

\begin{abstract}
Bowditch introduced the notion of diffuse groups as a geometric variation of the unique product property.
We elaborate on various examples and non-examples, keeping the geometric point of view from Bowditch's paper.
In particular, we discuss fundamental groups of flat and hyperbolic manifolds.
The appendix settles an open question by providing an example of a group which is diffuse but not left-orderable.
\end{abstract}

\maketitle
\setcounter{tocdepth}{1}
\tableofcontents

\setcounter{tocdepth}{2}


\section{Introduction}

Following B. Bowditch \cite{Bowditch_diff} we say that
a group $\Gamma$ is \emph{diffuse}, if every finite non-empty subset $A \subset \Gamma$ has an \emph{extremal point}, this is, an element $a\in A$ 
such that for any $g \in \Gamma\setminus\{1\}$ either $ga$ or $g^{-1}a$ is not in $A$ (see also \ref{surv_Bowditch} below). 
A non-empty finite set without extremal points will be called a \emph{ravel}\footnote{We think of this as an entangled ball of string.};
thus a group is diffuse if and only if it does not contain a ravel. Every non-trivial finite subgroup of $\Gamma$ is a ravel, hence a diffuse group is torsion-free. 
In this work we use geometric methods to  discuss various examples of diffuse and non-diffuse groups.

The interest in diffuse groups stems from Bowditch's observation that they have the unique product property (see Section \ref{sec_relatedProp} below).
Originally unique products were introduced in the study of group rings of discrete, torsion-free groups.
More precisely, it is easily seen that if a group $\Gamma$ has unique products, then
it satisfies Kaplansky's unit conjecture. In simple terms this means that the units in the group ring $\CC[\Gamma]$ are all \emph{trivial},
i.e.~of the form $\lambda g$ with $\lambda \in \CC^\times$ and $g \in \Gamma$. A similar question can be asked replacing $\CC$ by some integral domain.
A weaker conjecture (Kaplansky's zero divisor conjecture) asserts that $\CC[\Gamma]$ contains no zero divisor, and a still weaker one that it contains no idempotents other than $1_\Gamma$.
There are other approaches to the zero divisor and idempotent conjecture (see for example \cite{BLR}, \cite[Chapter 10]{Lueck_book})
which have succeeded in proving it for large classes of groups, whereas the unit conjecture has (to the best of our knowledge) only been tackled by establishing 
the possibly stronger
unique product property for a given group. In consequence 
it is still unknown if the conclusion of the unit conjecture holds, for example, for all torsion-free groups in
the class of crystallographic groups (see \cite{Craven_Pappas} for more on the subject),
while that of the zero-divisor conjecture is known to hold (among other) for all torsion-free groups in the finite-by-solvable class, as proven by P. Kropholler, P. Linnell and J. Moody in \cite{KLM}. 

There are further applications of the unique product property.
For instance, if $\Gamma$ has unique products, then it satisfies a conjecture of Y. O. Hamidoune on the size of isoperimetric atoms (cf.\ Conjecture 10 in \cite{BPS-isoperimetric}).
Let us further mention that there are torsion-free groups without unique products and
there has been some interest generated by finding such groups: see for instance \cite{Rips-Segev},\cite{Promislow},\cite{Steenbock},\cite{Arzhantseva_Steenbock},\cite{Carter}. 
We note that for the examples in \cite{Rips-Segev} (and their generalization in \cite{Steenbock}) it is not known if the conclusion of the zero-divisor conjecture holds. 

Using Lazard's theory of analytic pro-$p$ groups one can show that every arithmetic group $\Gamma$ has a finite index subgroup $\Gamma'$ such that the group ring $\ZZ[\Gamma']$
satisfies the zero divisor conjecture. This work originated from the idea to similarly study Kaplansky's unit conjecture \emph{virtually}.
In this spirit we establish virtual diffuseness for different classes of groups and, moreover, we discuss examples of diffuse and non-diffuse groups in order to clarify the borderline between the two.
Our results are based on geometric considerations.


\subsection{Results}

\subsubsection{Crystallographic groups}
The torsion-free crystallographic groups, also called Bieberbach groups, are virtually diffuse since free abelian groups are diffuse.
However, already in dimension three there is a Bieberbach group $\Delta_P$ which is not diffuse \cite{Bowditch_diff}. In fact, Promislow even showed that the group $\Delta_P$ 
does not satisfy the unique product property \cite{Promislow}.
On the other hand, the other nine 3-dimensional Bieberbach groups are diffuse. So is there an easy way to decide from a given Bieberbach group
whether it is diffuse or not? In Section \ref{sec_infrasol} we discuss this question and show that in many cases it suffices to know the holonomy group.
\begin{theostar}
 Let $\Gamma$ be a Bieberbach group with holonomy group $G$.
 \begin{itemize}
  \item[(i)]  If $G$ is not solvable, then $\Gamma$ is not diffuse.
  \item[(ii)] If $G$ has only cyclic Sylow subgroups, then $\Gamma$ is diffuse.
 \end{itemize}
\end{theostar}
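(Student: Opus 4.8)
\emph{Setup and common reductions.} Write $1\to L\to\Gamma\to G\to 1$ with $L\cong\ZZ^n$ the translation lattice, so that $\Gamma$ acts freely and cocompactly on $\RR^n$, with $L$ acting by translations and $G$ by the faithful holonomy representation $\rho$ on $\RR^n=L\otimes\RR$. Two elementary facts are used throughout: diffuseness passes to subgroups and to directed unions; and the preimage in $\Gamma$ of a subgroup $H\le G$ is again a Bieberbach group, with translation lattice $L$ and holonomy $H$ acting via $\rho|_H$. Recall also that $b_1(\Gamma)=\dim_\QQ(\QQ^n)^G$, and that when $b_1(\Gamma)>0$ the group $\Gamma$ surjects onto $\ZZ$ with kernel a Bieberbach group of dimension $n-1$ whose holonomy group is a subquotient of $G$.

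\emph{Part (ii).} The key point is: \emph{if every Sylow subgroup of $G$ is cyclic then $b_1(\Gamma)>0$}, i.e.\ $\rho$ contains a copy of the trivial representation. Indeed, torsion-freeness of $\Gamma$ forces the extension class $c\in H^2(G;L)$ to be \emph{special}: $\operatorname{res}^G_C(c)\ne 0$ for every subgroup $C\le G$ of prime order. For a cyclic $p$-group $P$ and a $\ZZ[P]$-lattice $M$ one has $H^2(P;M)=\widehat H^0(P;M)=M^P/N_PM$, which vanishes as soon as $M$ has no trivial $\ZZ[P]$-direct summand; hence $L$ must have a trivial $\ZZ[C]$-summand for each prime-order $C\le G$. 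Examining the $p$-primary components of $H^2(G;L)$ via restriction to the (cyclic) Sylow subgroups, together with the stable-element description of those components, one deduces that $\rho$ already contains the trivial representation. Granting this, (ii) is proved by induction on $n$ (trivially true for $n=0$): $\Gamma$ surjects onto $\ZZ$ with kernel $K$ a Bieberbach group of dimension $n-1$ and holonomy a subquotient of $G$, hence again with cyclic Sylow subgroups, so $K$ is left-orderable by induction; since $\ZZ$ is left-orderable, so is the extension $\Gamma$, and any left-orderable group is diffuse (take the largest element of a finite set). Equivalently, $\Gamma$ is poly-$\ZZ$, hence locally indicable, hence left-orderable, hence diffuse. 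The only real obstacle for (ii) is the cohomological statement above; I expect it to require a genuine combination of the integral representation theory of cyclic $p$-groups with the stable-element calculus for $H^2(G;L)$ (it amounts to the fact that a closed flat manifold with holonomy of this kind has positive first Betti number).

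\emph{Part (i).} Assume $G$ is not solvable. By Burnside's normal $p$-complement theorem together with the Feit--Thompson theorem its Sylow $2$-subgroup is non-cyclic, so $G$ contains a Klein four-group or a copy of $Q_8$. The model non-example $\Delta_P$ has holonomy a Klein four-group $V$ acting on $\RR^3$ by $\epsilon_1\oplus\epsilon_2\oplus\epsilon_3$, the sum of the three non-trivial characters of $V$; the plan is to exhibit a copy of $\Delta_P$ inside $\Gamma$. One locates a suitable elementary abelian $2$-subgroup $S_0$ of $G$ --- for instance a Sylow $2$-subgroup, or the centre of one, or the normal four-group of an $A_4\le G$ --- whose normalizer acts transitively (or richly enough) on the non-trivial characters of $S_0$; then faithfulness of $\rho$ forces $\rho|_{S_0}$ to contain all of them, and restriction to a four-group $V\le S_0$ shows that $\epsilon_1\oplus\epsilon_2\oplus\epsilon_3$ occurs in $\rho|_V$ as a direct summand. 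Passing to a finite-index subgroup of the preimage $\Gamma_V$ of $V$, chosen so that its extension cocycle becomes concentrated on a single such summand (this is where the condition $b_1>0$ does no harm: extra trivial summands and parallel copies can be decoupled), one obtains --- the subgroup being torsion-free, hence with special cocycle --- a copy of $\Delta_P$ times a free abelian group. As $\Delta_P$ is not diffuse, neither is $\Gamma$.

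\emph{Main obstacles for (i).} There are two. First, one must check that for every non-solvable $G$ some elementary abelian $2$-subgroup, or some $Q_8$, is well positioned in the above sense; this is a finite group-theoretic point for which one reduces to $G$ minimal non-solvable and invokes Thompson's classification (the $\PSL_2$ and Suzuki cases admit an abelian $S_0$ with transitive normalizer, while perfect central extensions such as $\SL_2(\FF_5)$ have no four-group and must be handled through their Sylow $Q_8$, whose automorphizing $\ZZ/3$ plays the role of the normalizer). Second --- and this is where the real work lies --- one must prove the $Q_8$-analogue of Bowditch's result: a Bieberbach group whose holonomy is $Q_8$ and whose holonomy representation contains the faithful $4$-dimensional representation of $Q_8$ is not diffuse, established, like Bowditch's example, by producing an explicit ravel.
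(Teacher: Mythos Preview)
Your proposal has genuine gaps in both parts, and in each case the paper's route is both shorter and complete.

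\textbf{Part (i).} Your plan is to locate a concrete non-diffuse subgroup (a copy of $\Delta_P$, or a $Q_8$-analogue) inside any Bieberbach group with non-solvable holonomy. You yourself list the obstacles: one must analyse the $2$-local structure of an arbitrary non-solvable group (invoking Feit--Thompson and Thompson's classification of minimal simple groups), and then prove from scratch a new non-diffuseness result for certain $Q_8$-holonomy Bieberbach groups. None of this is carried out, and none of it is needed. The paper's argument is essentially one line once the Linnell--Witte Morris theorem is available: a diffuse virtually polycyclic group is locally indicable, hence (by induction on Hirsch length) strongly polycyclic, hence solvable; therefore its finite quotient $G$ is solvable. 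Contrapositively, $G$ non-solvable forces $\Gamma$ non-diffuse. No explicit ravel is ever produced.

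\textbf{Part (ii).} Your strategy---show $b_1(\Gamma)>0$ whenever all Sylow subgroups of $G$ are cyclic, then induct---is in the right spirit, but the cohomological step is not proved: the passage from ``$L$ has a trivial $\ZZ[C]$-summand for each prime-order cyclic $C$'' to ``$\rho$ contains the trivial representation of $G$'' via stable elements is only asserted. The paper bypasses this computation entirely by quoting a characterisation of Hiller--Sah: a finite group is \emph{primitive} (the holonomy of some Bieberbach group with finite abelianization) if and only if it has no cyclic Sylow subgroup admitting a normal complement. If $\Gamma$ were non-diffuse, then by Linnell--Witte Morris some finitely generated $\Gamma_0\le\Gamma$ has $b_1(\Gamma_0)=0$; its holonomy $G_0$ is therefore primitive. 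For the smallest prime $p$ dividing $|G_0|$, Burnside's normal $p$-complement theorem shows a cyclic Sylow $p$-subgroup would have a normal complement, so the Sylow $p$-subgroup of $G_0$---and hence of $G$---is non-cyclic.

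The common missing idea is to exploit the Linnell--Witte Morris equivalence (diffuse $\Leftrightarrow$ locally indicable for amenable groups) structurally rather than hunting for explicit ravels; for (ii) this is combined with the Hiller--Sah criterion, which replaces your unfinished cohomological argument.
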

Note that a finite group $G$ with cyclic Sylow subgroups is meta-cyclic, thus solvable. 
We further show that in the remaining case, where $G$ is solvable and has a non-cyclic Sylow subgroup,
the group $G$ is indeed the holonomy of a diffuse and a non-diffuse Bieberbach group.
Moreover, we give a complete list of the $16$ non-diffuse Bieberbach groups in dimension four.
Our approach is based on the equivalence of diffuseness and local indicability for amenable groups 
as obtained by Linnell and Witte Morris \cite{lW-M}. We include a new geometric proof of their result for the special
case of virtually abelian groups.


\subsubsection{Discrete subgroups of rank-one Lie groups}

The class of hyperbolic groups is one of the main sources of examples of diffuse groups in \cite{Bowditch_diff}: it is an immediate consequence of Corollary 5.2 
loc.~cit.\ that any residually finite, word-hyperbolic group contains with finite index a diffuse subgroup
(the same statement for unique products was proven earlier by T.~Delzant \cite{Delzant}). In particular, cocompact discrete subgroups of rank one Lie groups are virtually diffuse (for example, given an arithmetic lattice $\Gamma$ in such a Lie group, any normal congruence subgroup of $\Gamma$ of sufficiently high level is diffuse). On the other hand not much is known in this respect about relatively hyperbolic groups, and it is natural to ask whether a group which is hyperbolic relatively to diffuse subgroups must itself be virtually diffuse. In this paper we answer this question by the affirmative in the very particular case of non-uniform lattices of rank one Lie groups. 

\begin{theostar}
If $\Gamma$ is a lattice in one of the Lie groups $\SO(n,1),\SU(n,1)$ or $\Sp(n,1)$ then there is a finite-index subgroup $\Gamma'\le\Gamma$ such that $\Gamma'$ is diffuse. 
\label{rank1_lattice:intro}
\end{theostar}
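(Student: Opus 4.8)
The plan is to split the argument according to whether $\Gamma$ is cocompact or not. If $\Gamma$ is cocompact, it is a word-hyperbolic group (being a uniform lattice in a rank-one Lie group), hence residually finite, and the remark in the introduction (a consequence of Bowditch's Corollary 5.2) already gives a diffuse finite-index subgroup; so the real content is the non-uniform case. The geometric model to keep in mind is that $\Gamma$ acts on the associated rank-one symmetric space $X$, and $\Gamma\backslash X$ is a finite-volume manifold (orbifold) with finitely many cusps. After passing to a finite-index torsion-free subgroup we may assume $M=\Gamma\backslash X$ is a manifold; its thick part $M_{\geq\eps}$ is compact, and each cusp is a quotient of a horoball by a virtually nilpotent group (the stabilizer of a parabolic fixed point in $\partial X$). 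This exhibits $\Gamma$ as hyperbolic relative to the cusp subgroups $P_1,\dots,P_k$, each of which is virtually nilpotent, hence amenable and — by the Bieberbach/infra-nil case treated earlier in the paper — virtually diffuse.

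The heart of the proof will be a combination theorem: a residually finite group that is hyperbolic relative to a finite family of (virtually) diffuse, residually finite subgroups has a diffuse finite-index subgroup. First I would pass to a common finite-index subgroup so that all the peripheral subgroups $P_i$ are themselves diffuse. Then I would invoke separability: in a lattice in a rank-one Lie group, the peripheral (cusp) subgroups are separable — this follows from congruence/LERF-type properties, or more geometrically by intersecting congruence subgroups — so one can find a further finite-index subgroup $\Gamma'\leq\Gamma$ in which the $P_i$ and all their conjugates are "far apart." The strategy is then to take a finite non-empty $A\subset\Gamma'$ and produce an extremal point. Using the relative hyperbolicity one builds a $\Gamma'$-equivariant action on a $\delta$-hyperbolic space (a cusped space, or the coned-off Cayley graph together with the horoballs) and runs Bowditch's barycentre/visual-metric argument from his treatment of hyperbolic groups: unless $A$ is contained in a single horoball (up to bounded distance) it has a point that is "extremal at infinity" in the hyperbolic space, and this lifts to a genuine extremal point of $A$ in $\Gamma'$ provided the finite index was chosen large enough relative to the diameter bound coming from $A$. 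If instead $A$ lies inside a single horoball, then after translating it lies in a bounded neighbourhood of a single peripheral coset $gP_ig^{-1}$, and diffuseness of $P_i$ (transported by $g$) supplies the extremal point — here one needs that the "collar" around the cusp is embedded in $\Gamma'$, which is exactly what separability buys.

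The main obstacle, and the step I expect to require the most care, is making the passage from "extremal in the hyperbolic/cusped space" to "extremal in the group" uniform: Bowditch's argument for hyperbolic groups produces, for each finite $A$, a finite-index subgroup working for that $A$, but here we need a single $\Gamma'$ that works for all finite subsets. The resolution should be the standard one for these virtual statements — the constant governing when the hyperbolic-space argument succeeds depends only on $\delta$, on the geometry of the cusps, and on the diameter of $A$ in a way that can be absorbed by choosing $\Gamma'$ so that its systole (injectivity radius of $M'=\Gamma'\backslash X$) exceeds the relevant threshold; since $X$ has finitely many cusp types and $\Gamma$ is residually finite with separable peripherals, such $\Gamma'$ exists. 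A secondary technical point is ensuring torsion-freeness and the manifold structure simultaneously with the separability condition, but this is routine by Selberg's lemma combined with the congruence subgroup constructions. I would also remark that this gives a uniform statement only because we are in the rank-one lattice setting, where the peripheral structure is so rigid; the analogous question for general relatively hyperbolic groups with diffuse peripherals remains open precisely because one lacks this control.
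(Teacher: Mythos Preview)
Your approach diverges substantially from the paper's, and its central step---a combination theorem asserting that a residually finite group hyperbolic relative to diffuse peripherals is virtually diffuse---is precisely the question the introduction flags as open (``it is natural to ask whether a group which is hyperbolic relatively to diffuse subgroups must itself be virtually diffuse''). The paper answers this only for rank-one lattices, and not by any such combination argument.

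Instead, the paper works directly on the symmetric space $X$ and verifies Bowditch's metric criterion (Lemma~\ref{dist_move_crit}) for a suitable finite-index $\Gamma'$: for every $g\in\Gamma'\setminus\{1\}$ and every $x,a\in X$, one of $d(gx,a),d(g^{-1}x,a)$ strictly exceeds $d(x,a)$. For axial $g$ this holds once $\min(g)$ exceeds a constant depending only on the hyperbolicity of $X$ (Lemma~\ref{minsep}); a finite-index subgroup achieving this exists by Lemma~\ref{fin_geod} and residual finiteness. The genuinely new ingredient is Proposition~\ref{parsep}: \emph{every unipotent isometry of $\HH_A^n$ already satisfies the metric criterion}, proved by a direct convexity computation in the projective model. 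One then arranges (Lemma~\ref{nofun}) that all parabolics in $\Gamma'$ are unipotent; this requires the eigenvalues of parabolic elements of $\Gamma$ to be roots of unity, and showing that lattices enjoy this property (via a Kronecker-type argument, using that each cusp subgroup contains a lattice of the unipotent radical $N$) is the specific content of Corollary~\ref{lattice_diff}. No relatively-hyperbolic machinery, separability, or cusped-space construction is used.

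Your sketch has a genuine gap in the ``spread out'' case. You assert that an extremal point in the cusped space ``lifts to a genuine extremal point of $A$ in $\Gamma'$ provided the finite index was chosen large enough relative to the diameter bound coming from $A$,'' and then try to absorb this $A$-dependence into a systole condition. But you never say what handles the parabolic elements: they have $\min(g)=0$, a systole bound on $\Gamma'\backslash X$ constrains only the axial elements, and in the cusped space parabolics still fix a point at infinity rather than becoming uniformly loxodromic. The paper's Proposition~\ref{parsep} is exactly the missing piece that makes the metric criterion go through for parabolics; without an analogue of it, your horoball/complement dichotomy does not close, and the ``resolution should be the standard one'' is not a proof.
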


The proof actually shows, in the case of an arithmetic lattice, that normal congruence subgroups of sufficiently large level are diffuse.
We left the case of non-uniform lattices in the exceptional rank one group $F_4^{-20}$ open, but it is almost certain that the proof of this
Theorem adapts also to this case. Theorem \ref{rank1_lattice:intro} is obtained as a corollary of a result on a more general class
of geometrically finite groups of isometries. Another consequence is the following theorem.

\begin{theostar}
Let $\Gamma$ be any discrete, finitely generated subgroup of $\SL_2(\CC)$. There exists a finite-index subgroup $\Gamma'\le\Gamma$ such that $\Gamma'$ is diffuse. 
\label{geomfin_dim3:intro}
\end{theostar}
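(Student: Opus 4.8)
The plan is to reduce the assertion --- via $3$-manifold topology and Thurston's hyperbolization theorem --- to the case of geometrically finite Kleinian groups, which is handled by the general result on geometrically finite groups of isometries of rank-one symmetric spaces that yields Theorem~\ref{rank1_lattice:intro}.

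First I would reduce to the torsion-free case. By Selberg's lemma $\Gamma$ has a finite-index torsion-free subgroup, and since ``admits a finite-index diffuse subgroup'' passes in both directions between a group and its finite-index subgroups (subgroups of diffuse groups are diffuse, and finite index is transitive), it suffices to prove the theorem when $\Gamma$ is torsion-free. Such a $\Gamma$ has trivial intersection with $\{\pm I\}$, hence injects into $\PSL_2(\CC)=\Isom^+(\HH^3)$, so we may regard it as a finitely generated torsion-free Kleinian group. If $\Gamma$ is elementary it is virtually abelian, hence --- being finitely generated and torsion-free --- contains a finite-index subgroup isomorphic to $\ZZ^k$ for some $k\le 2$; free abelian groups are diffuse, so we are done in this case. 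Assume from now on that $\Gamma$ is non-elementary.

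Next I would apply Scott's compact core theorem to $M=\HH^3/\Gamma$, obtaining a compact connected irreducible submanifold $N\subseteq M$ whose inclusion is a homotopy equivalence, so that $\pi_1(N)\cong\Gamma$. Then $N$ is orientable (as $M$ is) and atoroidal: an incompressible torus in $N$ would produce a $\ZZ^2$ subgroup of the Kleinian group $\Gamma$, necessarily a rank-two parabolic subgroup, which forces the torus to be boundary-parallel. Two cases remain. If $\partial N=\emptyset$ then $N=M$ is a closed hyperbolic $3$-manifold, $\Gamma$ is a cocompact lattice in $\PSL_2(\CC)$, and the conclusion is a special case of Theorem~\ref{rank1_lattice:intro}. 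If $\partial N\neq\emptyset$ then, excluding the trivial possibility that $N$ is a $3$-ball (which does not occur, $\Gamma$ being non-elementary), $N$ is a Haken manifold; by Thurston's hyperbolization theorem for Haken manifolds the interior of $N$ carries a complete \emph{geometrically finite} hyperbolic structure. This realizes $\Gamma\cong\pi_1(N)$ as a geometrically finite group of isometries of $\HH^3$, and the general theorem on such groups supplies a finite-index diffuse subgroup. Unwinding the isomorphism $\Gamma\cong\pi_1(N)$ and the finite-index inclusions then gives the theorem for the original subgroup of $\SL_2(\CC)$.

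I expect the main obstacle to be the geometric input in the last step. One has to use that Thurston's construction yields a \emph{geometrically finite} hyperbolic structure and not merely a complete one; this is the crucial point, because a generic finitely generated Kleinian group is geometrically infinite, so the hyperbolic structure one begins with on $M$ is of no direct use and must genuinely be replaced by a geometrically finite one with the same fundamental group. One must also dispose of the low-complexity exceptions in the hyperbolization theorem --- small Seifert-fibered spaces, twisted $I$-bundles over the Klein bottle, and so on --- but these have virtually abelian fundamental group and are therefore covered by the elementary case (or do not arise). Verifying this, and checking that ``virtually diffuse'' is inherited along finite-index inclusions, finite-index overgroups, and abstract isomorphisms, is routine.
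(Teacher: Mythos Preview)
Your proof is correct and follows essentially the same route as the paper's proof of Corollary~\ref{klein3}: reduce via Thurston's hyperbolization for Haken manifolds to a geometrically finite Kleinian group, then apply Theorem~\ref{rank1}. The paper compresses your topological discussion (Scott core, atoroidality, Haken) into a single citation of \cite[Theorem~4.10]{Matsuzaki_Taniguchi}, and makes explicit the one point you leave implicit---that every parabolic element of $\SL_2(\CC)$ is automatically unipotent, so the eigenvalue hypothesis of Theorem~\ref{rank1} is trivially satisfied in dimension three.
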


The proof of these theorems uses the same approach as Bowditch's, 
that is a metric criterion (Lemma \ref{dist_move_crit} below) for the action on the relevant hyperbolic space. 
The main new point we have to establish concerns the behaviour of unipotent isometries: 
the result we need (Proposition \ref{parsep} below) is fairly easy to observe for real hyperbolic spaces;
for complex ones it follows from a theorem of M. Phillips \cite{Phillips_dir}, and we show that the argument used there can be generalized in a straightforward way to quaternionic hyperbolic spaces. We also study axial isometries of real hyperbolic spaces in some detail, and give an optimal criterion (Proposition \ref{bowditch_mieux}) which may be of use in determining whether a given hyperbolic manifold has a diffuse fundamental groups. 


\subsubsection{Three--manifold groups}

Following the solution of both Thurston's Geometrization
conjecture (by G. Perelman \cite{Perelman1, Perelman2})
and the Virtually Haken conjecture (by I. Agol \cite{Agol_VH} building on work of D. Wise) it is known by previous work
of J. Howie \cite{Howie}, 
and S. Boyer, D. Rolfsen and B. Wiest \cite{BRW}
that the fundamental group of any compact three--manifold contains
a left-orderable finite-index subgroup.
Since left-orderable groups are diffuse (see Section \ref{sec_relatedProp} below)
this implies the following.

\begin{theostar}
\label{thm_3manifolds}
Let $M$ be a compact three--manifold, then there is a finite-index subgroup in $\pi_1(M)$ which is diffuse. 
\label{res_dim3}
\end{theostar}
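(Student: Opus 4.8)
The plan is to reduce the statement to virtual left-orderability and then hand the problem to the structure theory of $3$-manifolds. Recall from Section~\ref{sec_relatedProp} that every left-orderable group is diffuse and that left-orderability is inherited by subgroups; hence it suffices to produce a finite-index subgroup $\Gamma'\le\pi_1(M)$ carrying a left-invariant total order. The trivial group is (vacuously) left-orderable, so I may assume $\pi_1(M)$ is infinite, and replacing $M$ by its orientation double cover I may assume that $M$ is orientable.

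First I would decompose $M$. Applying the prime (Kneser--Milnor) decomposition and, when $\partial M\ne\emptyset$, also capping off spherical boundary components with balls and maximally compressing the boundary, one writes $\pi_1(M)$ as a free product of a free group (absorbing the $S^2\times S^1$ summands and the handle compressions) together with the fundamental groups of finitely many orientable irreducible pieces with empty or incompressible boundary. Free groups are left-orderable, a free product of left-orderable groups is left-orderable (Vinogradov), and a short argument via normal cores and the Kurosh subgroup theorem shows that a free product of virtually left-orderable groups is again virtually left-orderable; so I am reduced to a single orientable irreducible $3$-manifold $M$ with infinite fundamental group. If this $M$ has nonempty (incompressible) boundary, then ``half lives, half dies'' forces $b_1(M)>0$; if it is closed, it is aspherical by the sphere theorem.

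Now I would split according to the first Betti number. If $M$ admits a finite cover $M'$ with $b_1(M')>0$, then by the results of Howie~\cite{Howie} and of Boyer--Rolfsen--Wiest~\cite{BRW} the group $\pi_1(M')$ is left-orderable and we are done; thus the entire problem is to produce such a cover. Here I invoke Geometrization~\cite{Perelman1,Perelman2}: $M$ is geometric, or it is cut along its JSJ tori into Seifert-fibered and hyperbolic pieces. For the non-hyperbolic geometries and for graph manifolds the existence of a finite cover of positive first Betti number is classical; in the presence of a hyperbolic JSJ piece it follows from Agol's proof of the Virtually Haken Conjecture~\cite{Agol_VH} (building on Wise), which in fact yields a finite cover fibering over the circle, and in particular one with $b_1>0$. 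Choosing such covers of the JSJ pieces compatibly along the gluing tori, and propagating left-orderability through the resulting graph-of-groups decomposition by the same free-product and amalgam considerations, produces the desired finite-index left-orderable subgroup $\Gamma'$, and diffuseness follows.

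The hard part is not any step of this assembly but the input on which it rests: a rational homology $3$-sphere admits no nontrivial homomorphism to $\ZZ$ and its fundamental group need not be left-orderable at all, so the crux is the passage to a finite cover of positive first Betti number --- which is precisely what Geometrization together with Agol's Virtually Haken theorem supply. Everything else (the reductions to the orientable, prime and irreducible case; the $b_1>0$ criterion of Howie and Boyer--Rolfsen--Wiest; the implication left-orderable $\Rightarrow$ diffuse) is elementary or quoted.
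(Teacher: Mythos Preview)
Your strategy---reduce virtual diffuseness to virtual left-orderability via Howie \cite{Howie} and Boyer--Rolfsen--Wiest \cite{BRW}, and then feed in the structure theory of $3$--manifolds---is correct in outline and is precisely the route the paper describes in the introduction as already giving Theorem~\ref{res_dim3}. The paper's own proof in Section~\ref{3_mfd} deliberately takes a different path: it never passes through left-orderability at all, but establishes virtual diffuseness directly by (i) reducing to the irreducible case via the free-product lemma, (ii) proving a Gluing Lemma (Lemma~\ref{glue}) for graphs of groups with diffuse edge groups and diffuse coset actions, and (iii) verifying these hypotheses for the JSJ pieces using Theorem~\ref{real_bd_diff} on boundary actions and a careful analysis of peripheral subgroups (Lemma~\ref{geom_vd}). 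What this buys is that the paper's argument avoids Agol's Virtually Haken theorem entirely, and the gluing lemma is a reusable tool; your approach, by contrast, yields the nominally stronger conclusion of virtual left-orderability, but at the cost of invoking much deeper input.

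Two points on the execution. First, once you invoke Agol \cite{Agol_VH}, the closed aspherical manifold $M$ itself virtually fibres and hence has a finite cover with $b_1>0$; your last step of ``choosing covers of the JSJ pieces compatibly along the gluing tori and propagating left-orderability through the graph-of-groups'' is then unnecessary, and as written it is not justified---amalgams over $\ZZ^2$ are not free products, and left-orderability does not propagate through amalgams by ``the same free-product considerations''. Second, as the paper itself remarks just after the statement of Theorem~\ref{res_dim3}, Agol is not actually needed even for your approach: manifolds with nontrivial JSJ decomposition are handled by \cite[Theorem~1.1(2)]{BRW}, the non-hyperbolic geometries are elementary, and closed hyperbolic manifolds are covered by Bowditch's injectivity-radius criterion. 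If you streamline along these lines your argument becomes a clean alternative proof.
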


Actually, one does not need Agol's work to prove this weaker result:
the case of irreducible manifolds with non-trivial JSJ-decomposition
is dealt with in \cite[Theorem 1.1(2)]{BRW},
and non-hyperbolic geometric manifolds are easily
seen to be virtually orderable.
Finally, closed hyperbolic manifolds can be handled
by Bowditch's result (see (iv) in Section \ref{surv_Bowditch} below).

We give a more direct proof of Theorem \ref{res_dim3} in Section \ref{3_mfd}; 
the tools we use (mainly a `virtual' gluing lemma) may be of independent interest.
The relation between diffuseness (or unique products) and left-orderability is not very clear at present;
in the appendix Nathan Dunfield gives an example of a compact hyperbolic three-manifold whose fundamental group is not left-orderable, but nonetheless diffuse. 

\subsection*{Acknowledgements}

We are pleased to thank to George Bergman, Andres Navas and Markus Steenbock for valuable comments on a first version of this paper. The second author would especially like to thank Pierre Will for directing him to the article \cite{Phillips_dir}. 

Both authors are grateful to the Max-Planck-Institut f\"ur Mathematik in Bonn, where this work was initially developed, and which supported them financially during this phase.


\section{Diffuse groups}
\label{diffuse-general}

We review briefly various notions and works related to diffuseness and present some questions and related examples of groups. 


\subsection{A quick survey of Bowditch's paper}
\label{surv_Bowditch}

We give here a short recapitulation of some of the content in Bowditch's paper~\cite{Bowditch_diff}.
It introduces the general notion of a diffuse action of a group. Let $\Gamma$ be a group acting on a set $X$.
Given a finite set $A \subset X$. An element $a \in A$ is said to be an \emph{extremal point} in $A$, 
if for all $g \in \Gamma$ which do not stabilize $a$ either $ga$ or $g^{-1}a$ is not in $A$.
The action of $\Gamma$ on $X$ is said to be diffuse
if every finite subset $A$ of $X$ with $|A| \geq 2$ has at least {\it two} extremal points. An action in which each finite subset has at least {\it one} extremal point is called weakly diffuse by Bowditch; we will not use this notion in the sequel. 
It was observed by Linnell and Witte-Morris \cite[Prop.6.2.]{lW-M}, that a free action is diffuse if and only if it is weakly diffuse.
Thus a group is diffuse (in the sense given in the introduction) if and only if its action on itself by left-translations is diffuse.
More generally, Bowditch proves that if a group admits a diffuse action whose stabilizers are diffuse groups, then the group itself is diffuse.
In particular, an extension of diffuse groups is diffuse as well. 

The above can be used to deduce the diffuseness of many groups. For example,
 strongly polycyclic groups are diffuse since they are, by definition, obtained from the trivial group by taking successive extensions by $\ZZ$. 
Bowditch's paper also provides many more examples of diffuse groups:
\begin{enumerate}[(i)]
\item The fundamental group of a compact surface of nonpositive Euler characteristic is diffuse; 
\item More generally, any free isometric action of a group on an $\RR$-tree is diffuse; 
\item A free product of two diffuse groups is itself diffuse; 
\item A closed hyperbolic manifold with injectivity radius larger than $\log(1+\sqrt{2})$ has a diffuse fundamental group.  
\end{enumerate}

We conclude this section with the following simple useful lemma, which appears as Lemma 5.1 in \cite{Bowditch_diff}.

\begin{lemma}
If $\Gamma$ acts on a metric space $(X,d_X)$ satisfying the condition
\begin{equation}
\forall x,y\in X,g\in\Gamma :\: gx \neq x \implies \max(d_X(gx,y), d_X(g^{-1}x,y)) > d(x,y)
\label{dist_move_crit_eq}\tag{$\ast$}
\end{equation}
then the action is diffuse. 
\label{dist_move_crit}
\end{lemma}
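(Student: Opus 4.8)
The statement says: if $\Gamma$ acts on a metric space $(X,d_X)$ satisfying condition $(\ast)$, then the action is diffuse, meaning every finite subset $A\subseteq X$ with $|A|\ge 2$ has at least two extremal points. The natural idea is to use the metric to single out candidate extremal points: take a point of $A$ that is, in a suitable sense, as far as possible from the "center of mass" of $A$, or more robustly, a point realizing the diameter of $A$.

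So first I would let $A\subseteq X$ be finite with $|A|\ge 2$, and pick $a,a'\in A$ with $d_X(a,a')=\operatorname{diam}(A)=:D$ (this exists since $A$ is finite). The claim will be that both $a$ and $a'$ are extremal points of $A$; since $a\ne a'$ these are genuinely two distinct extremal points. I will argue for $a$, the case of $a'$ being symmetric. Suppose $g\in\Gamma$ does not stabilize $a$, i.e.\ $ga\ne a$. Applying hypothesis $(\ast)$ with $x=a$ and $y=a'$ gives $\max(d_X(ga,a'),d_X(g^{-1}a,a'))>d_X(a,a')=D$. Hence at least one of $ga$, $g^{-1}a$ lies at distance strictly greater than $D$ from $a'\in A$, so — since every point of $A$ is within distance $D$ of $a'$ — that point cannot belong to $A$. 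Therefore $ga\notin A$ or $g^{-1}a\notin A$, which is exactly the condition that $a$ is an extremal point. Running the same argument with the roles of $a$ and $a'$ swapped (using $x=a'$, $y=a$) shows $a'$ is extremal too, so $A$ has at least two extremal points and the action is diffuse.

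**The obstacle (or lack thereof).** There is essentially no hard step here; the only thing to be slightly careful about is the definition of "extremal point" from Section \ref{surv_Bowditch}: one must recall that $a$ is extremal in $A$ provided for every $g$ that does \emph{not stabilize} $a$, one of $ga, g^{-1}a$ leaves $A$ — and we indeed only invoked $(\ast)$ under the hypothesis $ga\ne a$, which is implied by $g$ not stabilizing $a$. One should also double-check the edge case where some $g$ sends $a$ to another point of $A$ at distance $D$ from $a'$: then the hypothesis forces the \emph{other} of $ga$, $g^{-1}a$ out of $A$, so the conclusion still holds. Finally it is worth remarking that the argument does not even need $a,a'$ to realize the diameter of all of $A$; it suffices that each is a point of $A$ farthest from the other, but choosing a diametral pair is the cleanest packaging. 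Altogether this is a two-line argument once the diametral pair is fixed, and the substance of the lemma is really the cleverly chosen condition $(\ast)$, whose verification for specific group actions is deferred to later sections.
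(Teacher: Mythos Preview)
Your proof is correct and follows essentially the same approach as the paper: pick a diametral pair $a,a'$ in $A$ and use $(\ast)$ with $y$ equal to the other endpoint to push one of $ga$, $g^{-1}a$ outside $A$. The paper's argument is just a terser version of what you wrote.
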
 
\begin{proof}
 Let $A \subset X$ be compact with at least two elements. Take $a,b$ in $A$ with $d(a,b) = \rm{diam}(A)$, then these are extremal in $A$.
 It suffices to check this for $a$. Given $g \in \Gamma$ not stabilizing $a$, then $ga$ or $g^{-1}a$ is farther away from $b$, hence not in~$A$. 
\end{proof}

 Note that this argument does not require nor that the action be isometric, neither that the function $d_X$ on $X\times X$ be a distance.
 However this geometric statement is sufficient for all our concerns in this paper.


\subsection{Related properties}\label{sec_relatedProp}

Various properties of groups have been defined, which are closely related to diffuseness.
We remind the reader of some of these properties and their mutual relations.

Let $\Gamma$ be a group. 
We say that $\Gamma$ is \emph{locally indicable}, if every finitely generated non-trivial subgroup admits a non-trivial homomorphism into the group $\ZZ$. 
In other words, every finitely generated subgroup of $\Gamma$ has a positive first rational Betti number.

Let $\prec$ be a total order on $\Gamma$. The order is called \emph{left invariant}, if
\begin{equation*}
    x \prec y \:\implies \: gx \prec gy
\end{equation*}
for all $x$, $y$ and $g$ in $\Gamma$.
We say that the order $\prec$ on $\Gamma$ is \emph{locally invariant} if for all $x, g \in \Gamma$ with $g \neq 1$
either $gx \prec x$ or $g^{-1}x \prec x$.
Not all torsion-free groups admit orders with one of these properties. We say that $\Gamma$ is \emph{left-orderable} (resp.\ LIO) 
if there exists a left-invariant (resp.\ locally invariant) order on $\Gamma$.
It is easily seen that an LIO group is diffuse.
In fact, it was pointed out by Linnell and Witte Morris \cite{lW-M} that a group is LIO if and only if it is diffuse.
One can see this as follows: If $\Gamma$ is diffuse then every finite subset admits a locally invariant order (in an appropriate sense), and this 
yields a locally invariant order on $\Gamma$ by a compactness argument.

The group $\Gamma$ is said to have the \emph{unique product property} (or to have unique products) if for every two finite non-empty subsets $A, B \subset \Gamma$
there is an element in the product $x \in A\cdot B$ which can be written uniquely in the form $x = a b$ with $a \in A$ and $b \in B$.

The following implications are well-known:
\begin{equation*}
 \text{locally indicable} \stackrel{(1)}{\implies} \text{left-orderable} \stackrel{(2)}{\implies} \text{diffuse} \stackrel{(3)}{\implies}\text{unique products}
\end{equation*}
An example of Bergman \cite{Bergman} shows that (1) is in general not an equivalence, i.e.\ there are left-orderable groups which are not locally indicable (further examples are given by some of the hyperbolic three--manifolds studied in \cite[Section 10]{CaDu} which have a left-orderable fundamental group with finite abelianization). 

An explicit example showing that (2) is not an equivalence either is explained 
in the appendix written by Nathan Dunfield (see Theorem \ref{thm:hypmain}). 
However, the reverse implication to (3), that is the relation between unique products and diffuseness, remains completely mysterious to us. We have no idea what the answer to the following question should be
(even by restricting to groups in a smaller class, for example crystallographic, amenable, linear or hyperbolic groups). 

\begin{question}
Does there exist a group which is not diffuse but has unique products? 
\end{question}
 
It seems extremely hard to verify, for a given group, the unique product property without using any of the other three properties.

\newpage
\subsection{Some particular hyperbolic three--manifolds}
\subsubsection{A diffuse, non-orderable group}

In the appendix Nathan Dunfield describes explicitly an example of an arithmetic Kleinian group which is diffuse but not left-orderable -- this yields the following result (Theorem \ref{thm:hypmain}).

\begin{theorem}[Dunfield]
There exists a finitely presented (hyperbolic) group which is diffuse but not left-orderable. 
\label{diff_not_ord}
\end{theorem}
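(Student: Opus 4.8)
The plan is to exhibit a concrete arithmetic Kleinian group and verify the two required properties computationally, then package the verification as a proof. First I would choose a closed hyperbolic 3-manifold $M$ whose fundamental group $\Gamma = \pi_1(M)$ is known (from the literature, e.g.\ the census of small-volume manifolds or the manifolds studied in \cite{CaDu}) to have finite abelianization; finite abelianization rules out the existence of a nontrivial homomorphism $\Gamma \to \ZZ$, which is the obstruction one uses to detect non-left-orderability. More precisely, the strategy for non-orderability is to show that \emph{no} left-invariant order exists, and for this one cannot merely cite $H_1$ — one must rule out all orders directly. The standard approach (following Calegari--Dunfield and Boyer--Rolfsen--Wiest) is: a left-orderable group admits a nontrivial action on $\RR$ by orientation-preserving homeomorphisms, hence (after discretely approximating) a homomorphism to $\mathrm{Homeo}^+(\RR)$ with no global fixed point; one then derives a contradiction, typically by a finite combinatorial search over the possible cyclic orderings that the generators could induce on a putative orbit, or by using the fact that $\Gamma$ has a presentation in which the relators force any such order to collapse. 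So step one is to fix the manifold and its presentation; step two is to run the orderability obstruction (a finite case analysis on orderings of a generating set modulo the relations) to conclude $\Gamma$ is not left-orderable.

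Next I would establish diffuseness of $\Gamma$. Here the natural tool is Lemma~\ref{dist_move_crit}: $\Gamma$ acts on $\H^3$ by isometries, and it suffices to verify the metric condition $(\ast)$, i.e.\ that for every $\gamma \in \Gamma \setminus \{1\}$ and every point $x$ in a suitable fundamental domain (or every point of $\H^3$), at least one of $\gamma x$, $\gamma^{-1} x$ is strictly farther from a chosen basepoint than $x$ is. For a closed hyperbolic manifold this is implied by Bowditch's injectivity-radius criterion (item (iv) in Section \ref{surv_Bowditch}): if $\inj(M) > \log(1+\sqrt 2)$ then $\Gamma$ is diffuse. However, for a small-volume manifold the injectivity radius is typically \emph{too small} for that theorem to apply directly, so one needs the refined criterion promised in the paper (Proposition \ref{bowditch_mieux}, the "optimal" version for axial isometries) together with an explicit finite check. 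Concretely: enumerate the finitely many conjugacy classes of elements of $\Gamma$ whose translation length is below the threshold where condition $(\ast)$ might fail, and for each such element verify $(\ast)$ by an explicit computation with the matrix representation in $\PSL_2(\CC)$ — this is a finite computation because only boundedly many short geodesics exist and the "bad" region for each is compact. The output is: condition $(\ast)$ holds for all of $\Gamma$ acting on $\H^3$, hence by Lemma \ref{dist_move_crit} the action is diffuse, and since it is free (the manifold is a genuine manifold, $\Gamma$ torsion-free) the group $\Gamma$ is diffuse.

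Finally I would assemble the two halves: $\Gamma$ is finitely presented (being the fundamental group of a compact manifold, or directly from the chosen presentation), it is hyperbolic in the sense of being a cocompact Kleinian group, it is diffuse by the $\H^3$-action argument, and it is not left-orderable by the combinatorial obstruction; this is exactly the assertion of Theorem \ref{diff_not_ord}. I would remark that $\Gamma$ being diffuse is consistent with implication (3) (it has unique products) and with the \emph{failure} of the reverse of (2), which is the whole point.

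The main obstacle I expect is the non-orderability verification. Showing a group is left-orderable is easy (exhibit an order or an action on $\RR$); showing it is \emph{not} requires ruling out an infinite family of potential orders, and for a group with infinite $H_1$ this would be hopeless — so the choice of manifold with finite abelianization is essential, but even then one must carry out a genuine (computer-assisted) enumeration over the finitely many ways the generators can be cyclically ordered subject to the relators, checking that each leads to a contradiction. Getting a presentation small enough that this search terminates, and certifying the search, is the delicate part; the diffuseness side, by contrast, is a bounded and fairly mechanical computation with $2\times 2$ complex matrices once Proposition \ref{bowditch_mieux} is in hand.
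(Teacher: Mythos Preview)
Your outline is correct and matches the paper's approach closely: exhibit a specific closed arithmetic hyperbolic $3$--manifold, prove diffuseness via a geometric criterion for the action on $\HH^3$, and prove non--left--orderability by a finite case analysis on a putative positive cone. The non--orderability half is exactly what the appendix does (assume a left order with positive cone $P$, branch on the signs of a handful of short words, and in every branch produce a positive word that equals $1$ in $\Gamma$).

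The one tactical difference is on the diffuseness side. You anticipate that the injectivity radius will be too small for Bowditch's original criterion and plan to fall back on Proposition~\ref{bowditch_mieux} with a per--element check of the short geodesics. The paper avoids this entirely: rather than working with a small--volume census manifold directly, Dunfield passes to a carefully chosen $9$--fold congruence cover $N$ of $m007(3,2)$ whose systole ($\approx 1.802$) just clears $2\log(1+\sqrt 2)$, so Bowditch's \emph{original} injectivity--radius bound applies with no refinement needed. The cost is that non--orderability must then be established for the cover (a larger group), which is where the computer search through the towers of \cite{CalegariDunfield2006} comes in. Your route would also work in principle, but note that if a short geodesic in your chosen manifold genuinely fails the condition of Proposition~\ref{bowditch_mieux} (which is an if--and--only--if statement for \eqref{eq:propertyMax}), then Lemma~\ref{dist_move_crit} is simply unavailable for that manifold and you would have to discard it and search again---which is effectively what the paper's search through covers accomplishes.
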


With Linnell and Witte-Morris' result that for amenable groups both properties are equivalent \cite{lW-M} this shows that there is a difference in these matters between amenable and hyperbolic groups. To verify that the group is diffuse one can use Bowditch's result or our Proposition \ref{bowditch_mieux}.

Let us make a few comments on the origins of this example. The possibility to find such a group among this class of examples was proposed, unbeknownst to the authors, by A. Navas---see \cite[1.4.3]{Deroin_Navas_Rivas}. 
Nathan Dunfield had previously computed a vast list of examples of closed hyperbolic three--manifolds whose fundamental group is not left-orderable (for some examples see \cite{CaDu}), using an algorithm described in the second paper. The example in Appendix \ref{appendix} was not in this list, but was obtained by searching through the towers of finite covers of hyperbolic \3-manifolds studied in \cite[\S 6]{CalegariDunfield2006}.


\subsubsection{A non-diffuse lattice in $\PSL_2(\CC)$}
We also found an example of a compact hyperbolic 3-manifold with a non-diffuse fundamental group; in fact it is the hyperbolic three--manifold of smallest volume. 

\begin{theorem}
The fundamental group of the Weeks manifold is not diffuse. 
\label{Weeks_nd}
\end{theorem}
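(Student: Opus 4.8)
The plan is to exhibit an explicit ravel inside $\Gamma = \pi_1(\text{Weeks manifold})$, that is, a finite non-empty subset $A \subset \Gamma$ such that every $a \in A$ fails to be extremal: for each $a$ there is some $g \neq 1$ with both $ga$ and $g^{-1}a$ in $A$. Since the Weeks manifold is an arithmetic hyperbolic $3$-manifold with a well-understood fundamental group (a two-generator group with an explicit presentation, realizable as a specific arithmetic Kleinian group), the first step is to fix a concrete presentation $\Gamma = \langle x, y \mid r_1, r_2 \rangle$ and work inside it. I would then search, necessarily with computer assistance, for a small ravel: begin with a modest candidate set (for instance a ball of small radius in the word metric, or a cleverly chosen symmetric set built from short relators), and for each element check the extremality condition by testing all "witnesses" $g$ of bounded length. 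Because the defining relators give nontrivial words, the existence of short relations is exactly what makes ravels possible — this is the same mechanism as in Promislow's $\Delta_P$ and in Bowditch's $\Delta_P$ example.

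Once a candidate set $A = \{a_1, \dots, a_n\}$ together with witnesses $g_1, \dots, g_n$ is found, the proof reduces to a finite verification: one must check (i) that the listed elements of $\Gamma$ are pairwise distinct, (ii) that each $g_i \neq 1$, and (iii) that $g_i a_i$ and $g_i^{-1} a_i$ both lie in $A$. Steps (i) and (ii) are the only places where one genuinely needs to know something about $\Gamma$ beyond formal manipulation of words: distinctness and nontriviality must be certified, e.g. by mapping to a finite quotient, by using the faithful representation of $\Gamma$ into $\PSL_2(\CC)$ (or into $\PSL_2$ of the relevant quaternion algebra / number ring) and evaluating matrices, or by invoking a solution to the word problem (automatic structure, or Dehn's algorithm via the hyperbolic structure). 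I would present the representation-theoretic certificate, since it is self-contained: give explicit matrices for $x$ and $y$ and verify $\ne 1$ and pairwise-distinctness by a matrix computation. Step (iii) is then pure word reduction modulo the relators, which can be exhibited explicitly (each equality $g_i^{\pm 1} a_i = a_j$ is witnessed by a sequence of relator insertions).

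The main obstacle is the search itself: ravels in a group of this size need not be small, and a naive breadth-first search over the word ball may not reach one within feasible radius, so some structural insight is needed to locate a good candidate — for example exploiting the symmetry of the Weeks manifold (it has a nontrivial isometry group) to force the witness set to be equivariant and thereby cut down the search, or using the known relationship between the Weeks manifold and arithmetic congruence data to pick elements with controlled matrix entries. A secondary, purely expository obstacle is that the verification, while finite and elementary, is large; I would relegate the explicit list of group elements, witnesses, and relator-reduction sequences to a table (or, following the style of the rest of the paper, to computer-verifiable data), and in the text prove only the three checks (i)–(iii) in schematic form, noting that the matrix computations certifying (i) and (ii) can be carried out exactly over the ring of integers of the invariant trace field of the Weeks manifold.
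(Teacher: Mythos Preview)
Your proposal is correct and matches the paper's approach: the authors exhibit a ravel by computer search, using the explicit two-generator presentation and the faithful $\SL_2(\CC)$ representation (with entries in a degree-six number field) to solve the word problem. In practice the search is easier than you fear---the ball of radius four in the word metric already contains a ravel of size 141 (with a minimal sub-ravel of size 23), found by the simple ``peel off extremal points'' algorithm rather than by guessing witnesses directly.
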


We verified this result by explicitly computing a ravel in the fundamental group of the Weeks manifold.
We describe the algorithm and its implementations in Section \ref{computation_ravel}.
In fact, given a group $\Gamma$ and a finite subset $A$ one can decide whether $A$ contains a ravel by the following procedure:
choose a random point $a\in A$ ;
if it is extremal (which we check using a sub-algorithm based on the solution to the word problem in $\Gamma$)
we iterate the algorithm on $A\setminus\{a\}$,
otherwise we continue with another one. Once all the points of $A$ have been tested, what remains is either empty or a ravel in $\Gamma$.

\subsubsection{Arithmetic Kleinian groups}
In a follow-up to this paper we will investigate the diffuseness properties of arithmetic Kleinian groups, in the hope of finding more examples of the above phenomena. Let us mention two results that will be proven there:
\begin{itemize}
\item[(i)] Let $p>2$ be a prime. There is a constant $C_p$ such that if $\Gamma$ is a torsion-free arithmetic group with invariant trace field $F$ of degree $p$ and discriminant $D_F>C_p$, then $\Gamma$ is diffuse.
\item[(ii)] If $\Gamma$ is a torsion-free Kleinian group derived from a quaternion algebra over an imaginary quadratic field $F$ such that
$$
D_F\neq -3,-4,-7,-8, -11, -15, -20, -24
$$
then $\Gamma$ is diffuse. 
\end{itemize}


\subsection{Groups which are not virtually diffuse}

All groups considered in this article are residually finite and turn out to be virtually diffuse.
Due to a lack of examples, we are curios about an answer to the following question.

\begin{question}
  Is there a finitely generated (resp.\ finitely presented) group which is torsion-free, residually finite and not virtually diffuse?
\end{question}

The answer is positive without the finiteness hypotheses:
 given any non-diffuse, torsion-free, residually finite group $\Gamma$, then an infinite restricted direct product of factors isomorphic to $\Gamma$ is residually finite and not virtually diffuse. 

Furthermore, if we do not require the group to be residually finite, then
one may take a
restricted wreath product $\Gamma \wr U$ with some infinite group $U$.
The group $\Gamma \wr U$ is not virtually diffuse and it is finitely generated if $\Gamma$ and $U$ are finitely generated (not finitely presented, however). 
Moreover, by a theorem of Gruenberg \cite{Gruenberg} such a wreath product ($\Gamma$ non-abelian, $U$ infinite) is not residually finite.
Other examples of groups which are not virtually diffuse are the amenable simple groups constructed by K. Juschenko and N. Monod in \cite{Juschenko_Monod}; 
these groups cannot be locally indicable, however they are neither residually finite nor finitely presented. 

In the case of hyperbolic groups, this question is related to the residual properties of these groups -- namely it is still not known if all hyperbolic groups are residually finite.
A hyperbolic group which is not virtually diffuse
would thus be, in light of the results of Delzant--Bowditch, not residually finite.
It is unclear to the authors if this approach is feasible; for results in this direction see \cite{Gruber_Martin_Steenbock}. 

Finally, let us note that it would also be interesting to study the more restrictive class of linear groups instead of residually finite ones.



\section{Fundamental groups of infra-solvmanifolds}
\label{sec_infrasol}

\subsection{Introduction}
\subsubsection{Infra-solvmanifolds}
In this section we discuss diffuse and non-diffuse fundamental groups of infra-solvmanifolds. 
The focus lies on crystallographic groups, however we shall begin the discussion in a more general setting.
Let $G$ be a connected, simply connected, solvable Lie group and let $\aut(G)$ denote the group of continuous automorphisms of $G$. The affine group of $G$ is the semidirect product
$\aff(G) = G \rtimes \aut(G)$. A \emph{lattice} $\Gamma \subset G$ is a discrete cocompact subgroup of $G$.
An \emph{infra-solvmanifold} (of type $G$) is a quotient manifold $G/\Lambda$ where $\Lambda \subseteq \aff(G)$ is a torsion-free subgroup of the affine group 
such that $\Lambda \cap G$ has finite index in $\Lambda$ and is a lattice in $G$. If $\Lambda$ is not diffuse, we say that $G/\Lambda$ is a non-diffuse
infra-solvmanifold. 

The compact infra-solvmanifolds which come from a nilpotent Lie group
$G$ are characterised by the property that they are {\it almost flat}: that is,
they admit Riemannian metrics with bounded diameter and arbitrarily small sectional curvatures 
(this is a theorem of M. Gromov, see \cite{Gromov_flat}, \cite{Buser_Karcher}).
Those that come from abelian $G$ are exactly those that are {\it flat}, i.e. they admit a Riemannian metric with vanishing sectional curvatures.
We will study the latter in detail further in this section. We are not aware of any geometric characterization of general infra-solvmanifolds. 


\subsubsection{Diffuse virtually polycyclic groups are strongly polycyclic}
Recall that a group $\Gamma$ is (strongly) polycyclic if it admits a subnormal series with (infinite) cyclic factors.
By a result of Mostow lattices in connected solvable Lie groups are polycyclic (cf.\ Prop.~3.7 in \cite{Raghunathan1972}). 
Consequently, the fundamental group of an infra-solvmanifold 
is a virtually polycyclic group. 

As virtually polycyclic groups are amenable, we can use the following striking result 
of Linnell and Witte Morris \cite{lW-M}. 
\begin{theorem}[Linnell, Witte Morris]\label{thm_LinnellWitteMorris}
  An amenable group is diffuse if and only if it is locally indicable.
\end{theorem}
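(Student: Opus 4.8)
The plan is to prove the equivalence ``diffuse $\iff$ locally indicable'' for amenable groups by establishing each implication separately, and in fact only the genuinely nontrivial direction needs the amenability hypothesis. The easy direction is ``locally indicable $\implies$ diffuse'': this follows from the chain of implications recalled in Section \ref{sec_relatedProp}, since locally indicable groups are left-orderable (a classical result of Burns--Hale) and left-orderable groups are diffuse. So the heart of the matter is to show that an amenable diffuse group $\Gamma$ is locally indicable. Since every subgroup of a diffuse group is diffuse and every subgroup of an amenable group is amenable, it suffices to show that a finitely generated, nontrivial, amenable, diffuse group $\Gamma$ admits a nontrivial homomorphism to $\ZZ$; equivalently, that its abelianization is infinite.

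First I would reduce to showing that the abelianization $\Gamma^{\ab}$ is infinite. Suppose for contradiction that $\Gamma$ is a finitely generated amenable diffuse group with $\Gamma^{\ab}$ finite. Since $\Gamma$ is diffuse it is torsion-free, hence infinite (a nontrivial finite group is a ravel), so $\Gamma' = [\Gamma,\Gamma]$ is a nontrivial, infinite, finitely generated amenable diffuse group. The idea is to iterate: passing to the derived series, one produces an infinite strictly descending chain unless some term has infinite abelianization. The key structural input is that amenable diffuse groups cannot be ``too perfect''. Here is where amenability enters decisively: the plan is to use a F{\o}lner-set / averaging argument to show that if $\Gamma^{\ab}$ is finite then $\Gamma$ contains a ravel, contradicting diffuseness. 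More precisely, one picks a symmetric generating set $S$, considers a F{\o}lner set $A$ for $S$, and shows that when $\Gamma^{\ab}$ is finite the boundary structure of $A$ can be exploited so that every candidate extremal point $a \in A$ admits some $g \neq 1$ with both $ga, g^{-1}a \in A$ --- intuitively, the obstruction to finding extremal points is exactly a ``positivity'' phenomenon (an order), and finiteness of $H_1$ forces enough of $A$ to be ``surrounded'' in both directions.

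The cleanest route, and the one I expect the authors take, is actually to invoke the equivalence ``diffuse $\iff$ LIO'' (locally invariant orderable) from \cite{lW-M} stated earlier, plus a theorem of Linnell: an amenable group admitting a locally invariant order is locally indicable. So the concrete steps are: (1) recall diffuse $\iff$ LIO; (2) given an amenable LIO group $\Gamma$, pass to an arbitrary finitely generated nontrivial subgroup $H$, which is again amenable and LIO; (3) using amenability, produce a left-invariant (not merely locally invariant) order or, more directly, a nontrivial homomorphism $H \to \ZZ$ --- this is the amenable ``Witt-style'' argument that a locally invariant order on an amenable group can be averaged, via a F{\o}lner sequence, into an invariant one, and invariant-orderable amenable groups are known to be locally indicable by a result going back to Tararin / Rhemtulla in the amenable case. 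I would then conclude that $\Gamma$ is locally indicable.

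The main obstacle is exactly step (3): turning a merely \emph{locally} invariant order into something with global content. A locally invariant order carries no a priori compatibility with the group law, so one cannot directly take a ``positive cone''. The trick is that amenability gives an invariant mean on $\ell^\infty(\Gamma)$, and one uses the locally invariant order to define, for each $g \neq 1$, an element of $\{0,1\}$ recording whether $g$ ``moves points up or down'', then averages; the delicate point is checking that the averaged object is consistent enough to define a nontrivial homomorphism to $\RR$ (hence, since $\Gamma$ is finitely generated, to $\ZZ$), rather than collapsing to zero. Verifying non-triviality of the averaged quasi-character --- i.e.\ that it does not vanish identically --- is where the real work lies; everything else is formal.
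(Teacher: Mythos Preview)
The paper does not prove this theorem at all: it is quoted verbatim as a result of Linnell and Witte Morris \cite{lW-M}, with no proof given. What the paper \emph{does} prove (in Section~\ref{sec_geometric_ravels}) is only the special case of virtually abelian groups, and by a completely different, geometric route: given a crystallographic group $\Gamma$ with $b_1(\Gamma)=0$, Lemma~\ref{ravel_crystal} exhibits an explicit ravel, namely a large metric ball $B(r,e)$ in $\Gamma$, using the fact that the holonomy has no nonzero fixed vector to push any point of the ball inward from both sides. No orders, no averaging, no amenability beyond the obvious.

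Your outline, by contrast, is a sketch of the actual Linnell--Witte Morris argument for the general amenable case. The easy direction is fine. For the hard direction you correctly identify the reduction to finitely generated subgroups and the equivalence diffuse $\Leftrightarrow$ LIO, and you correctly locate the crux: upgrading a locally invariant order on an amenable group to a genuine homomorphism to $\RR$ (or a left-order). But you do not actually carry out this step---you say yourself that ``verifying non-triviality of the averaged quasi-character \ldots\ is where the real work lies''---and the heuristic you offer (averaging a $\{0,1\}$-valued ``up/down'' function via an invariant mean) is not, as stated, a proof: a locally invariant order gives no canonical such function, and even if one is chosen, the mean could well vanish. The genuine argument in \cite{lW-M} requires more structure (recurrent orders, Conrad's theory, or the Witte Morris theorem that amenable left-orderable groups are locally indicable); your proposal gestures at this but does not supply it. So as a proof it is incomplete, and in any case it is not what the present paper does.
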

We shall give a geometric proof of this theorem for the special case of virtually abelian groups in the next section.
Here we confine ourselves to pointing out the following algebraic consequence.
\begin{prop}\label{prop_diffusePolycyclic}
 A virtually polycyclic group $\Gamma$
 is diffuse if and only if $\Gamma$ is strongly polycyclic.
 Consequently, the fundamental group of an infra-solvmanifold is diffuse exactly if it is strongly polycyclic.
\end{prop}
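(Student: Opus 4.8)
The plan is to derive Proposition~\ref{prop_diffusePolycyclic} from Theorem~\ref{thm_LinnellWitteMorris} by showing that, \emph{for a virtually polycyclic group $\Gamma$}, the three properties ``$\Gamma$ is diffuse'', ``$\Gamma$ is locally indicable'', and ``$\Gamma$ is strongly polycyclic'' all coincide. Since virtually polycyclic groups are amenable, Theorem~\ref{thm_LinnellWitteMorris} already identifies diffuseness with local indicability, so the real content is the equivalence between local indicability and being strongly polycyclic within this class. The last sentence of the proposition is then immediate: the fundamental group of an infra-solvmanifold is virtually polycyclic (by Mostow's theorem, as recalled in the excerpt), so the first statement applies verbatim.

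The easy direction is that strongly polycyclic implies diffuse: by definition a strongly polycyclic group is built from the trivial group by successive extensions with infinite cyclic quotient, and since $\ZZ$ is diffuse and extensions of diffuse groups are diffuse (as recalled in Section~\ref{surv_Bowditch}), induction along the subnormal series does it. (Alternatively one observes directly that strongly polycyclic groups are locally indicable.) For the converse I would argue that a diffuse virtually polycyclic group $\Gamma$ is strongly polycyclic. First, $\Gamma$ is torsion-free, since diffuse groups are torsion-free. Now I would proceed by induction on the Hirsch length $h(\Gamma)$. A torsion-free virtually polycyclic group $\Gamma$ has a characteristic finite-index subgroup that is polycyclic, and more importantly a torsion-free polycyclic group always has a finite-index normal subgroup which is strongly polycyclic; in particular $\Gamma$ itself, being infinite (for $h(\Gamma)\ge 1$) and polycyclic-by-finite, has a nontrivial free abelian normal subgroup, hence its Hirsch length is positive. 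By Theorem~\ref{thm_LinnellWitteMorris}, $\Gamma$ is locally indicable, so in particular there is a surjection $\phi\colon\Gamma\twoheadrightarrow\ZZ$; let $N=\ker\phi$. Then $N$ is again virtually polycyclic and torsion-free, with $h(N)=h(\Gamma)-1$, and $N$ is diffuse because it is a subgroup of the diffuse group $\Gamma$. By the induction hypothesis $N$ is strongly polycyclic, and since $\Gamma/N\cong\ZZ$, the group $\Gamma$ is an extension of $\ZZ$ by a strongly polycyclic group, hence strongly polycyclic. The base case $h(\Gamma)=0$ forces $\Gamma$ finite, hence trivial (being torsion-free), which is strongly polycyclic.

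The step I expect to require the most care is the passage to the quotient: one must make sure that $N=\ker\phi$ is genuinely virtually polycyclic with strictly smaller Hirsch length, i.e.\ that the surjection onto $\ZZ$ really does drop the Hirsch length by exactly one. This follows from standard facts about the Hirsch length of polycyclic-by-finite groups (additivity in short exact sequences, and the fact that subgroups and quotients of polycyclic-by-finite groups are again polycyclic-by-finite), so it is routine but should be stated. One should also note the small subtlety that local indicability is inherited by the relevant subgroups only because every finitely generated subgroup of $\Gamma$ surjects onto $\ZZ$; here $\Gamma$ itself is finitely generated (being polycyclic-by-finite), so there is no issue in obtaining the initial surjection $\phi$. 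Finally, for the application to infra-solvmanifolds I would simply cite the already-quoted consequence of Mostow's result that such fundamental groups are virtually polycyclic, so that no separate argument is needed.
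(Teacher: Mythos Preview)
Your proposal is correct and follows essentially the same route as the paper: both argue the easy direction via Bowditch's closure under extensions, and for the converse both invoke Theorem~\ref{thm_LinnellWitteMorris} to obtain a surjection onto $\ZZ$ and then induct on the Hirsch length. The extra remarks you include (torsion-freeness, existence of a free abelian normal subgroup, additivity of Hirsch length) are harmless elaborations but are not needed for the argument to go through.
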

\begin{proof}
 Clearly, a strongly polycyclic group is a virtually polycyclic group, in addition
 it is diffuse by Theorem 1.2 in \cite{Bowditch_diff}.

Assume that $\Gamma$ is diffuse and virtually polycyclic. 
We show that $\Gamma$ is strongly polycyclic by induction on the Hirsch length $h(\Gamma)$.
If $h(\Gamma) = 0$, then $\Gamma$ is a finite group and as such it can only be diffuse if it is trivial.

Suppose $h(\Gamma)=n >0$ and suppose that the claim holds for all groups of Hirsch length at most $n-1$. 
By Theorem \ref{thm_LinnellWitteMorris} $\Gamma$ is locally indicable and (since $\Gamma$ is finitely generated)
we can find a surjective homomorphism $\phi\colon \Gamma \to \ZZ$.
Observe that $h(\Gamma) = h(\ker(\phi)) + 1$.
The kernel $\ker(\phi)$ is diffuse and virtually polycyclic, and we deduce from the induction hypothesis,
that $\ker(\phi)$ (and so $\Gamma$) is strongly polycyclic.
\end{proof}

In the next three sections we focus on crystallographic groups. After the discussion of a geometric proof of Theorem
\ref{thm_LinnellWitteMorris} in the crystallographic setting (\ref{sec_geometric_ravels}),
we will analyse the influence of the structure of the holonomy group for the existence of ravels (\ref{sec_Holonomy}).
We also give a list of all non-diffuse crystallographic groups in dimension up to four (\ref{sec_lowDimensions}).
Finally, we discuss a family of non-diffuse infra-solvmanifolds in \ref{sec_ExamplesOfSol} which are not flat manifolds.

\subsection{Geometric construction of ravels in virtually abelian groups}\label{sec_geometric_ravels}
   
   The equivalence of local indicability and diffuseness for amenable groups which was established by Linnell and
   Witte Morris \cite{lW-M} is a powerful result.
   Accordingly a virtually polycyclic group with vanishing first rational Betti number contains a ravel. 
   However, their proof does not explain a construction of ravels based on the vanishing Betti number. 
   They stress that this does not seem to be obvious even for virtually abelian groups. 
   The purpose of this section is to give a geometric and elementary proof of this
   theorem, for the special case of virtually abelian groups, which is based on an explicit construction of ravels.
   
   \begin{theorem} \label{thm_LWM_abelian}
    A virtually abelian group is diffuse exactly if it is locally indicable.
   \end{theorem}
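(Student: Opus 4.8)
The plan is to prove both implications. The implication ``locally indicable $\implies$ diffuse'' is easy: a locally indicable group $\Gamma$ is clearly torsion-free, and if it were not diffuse it would contain a ravel $A$; but then one can build a locally invariant order on $A$ using a surjection to $\ZZ$ on the subgroup generated by the relevant differences, which contradicts the existence of the ravel. In fact for the virtually abelian case one can argue even more directly, and I would just cite the elementary direction, or observe that a finitely generated subgroup surjecting onto $\ZZ$ is an extension $1\to N\to H\to\ZZ\to 1$ and invoke Bowditch's extension result. The substance is the converse: a virtually abelian group $\Gamma$ which is \emph{not} locally indicable must contain a ravel.

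\smallskip
\noindent\textbf{Setup for the hard direction.} Suppose $\Gamma$ is virtually abelian and not locally indicable. Then some finitely generated subgroup $\Gamma_0\le\Gamma$ has vanishing first rational Betti number; replacing $\Gamma$ by $\Gamma_0$ we may assume $\Gamma$ itself is finitely generated with $b_1(\Gamma;\QQ)=0$. Such a $\Gamma$ is a crystallographic-type group: it contains a finite-index free abelian normal subgroup $\Lambda\cong\ZZ^n$, and $\Gamma$ acts on the Euclidean space $V=\Lambda\otimes\RR$ by isometries, with holonomy group $G=\Gamma/\Lambda$ acting on $V$ by an integral representation $\rho$. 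The condition $b_1(\Gamma;\QQ)=0$ translates exactly into $V^G=0$, i.e.\ the holonomy representation has no nonzero fixed vectors (equivalently, the coinvariants vanish). So the goal becomes: construct an explicit finite set $A\subset\Gamma$ with no extremal point, using the hypothesis $V^G=0$.

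\smallskip
\noindent\textbf{Constructing the ravel.} The key geometric idea I would pursue: because $V^G=0$, the averaging operator shows that for any $v\in V$ the orbit sum $\sum_{g\in G}\rho(g)v=0$, so the translation parts of a well-chosen family of elements cancel. Concretely, I would take elements $\gamma_1,\dots,\gamma_m\in\Gamma$ whose images generate $G$ (or even run over all of $G$), write each $\gamma_i$ as ``rotation $\rho(g_i)$ followed by translation $t_i$'', and look at the finite set $A$ consisting of the identity together with suitable short products/powers of the $\gamma_i$ and their inverses, arranged so that the ``convex position'' obstruction of Lemma~\ref{dist_move_crit} fails \emph{everywhere}: for each candidate extremal $a\in A$ one exhibits $g\in\Gamma\setminus\{1\}$ with both $ga,g^{-1}a\in A$. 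The fixed-point-free condition $V^G=0$ is what guarantees there is no ``direction of escape'' — any attempt to order $A$ by a linear functional on $V$ is defeated because the functional cannot be $G$-invariant, so some group element moves things the wrong way. I would first do the cyclic holonomy case $G=\ZZ/p$ acting freely (this already forces a ravel of the shape used by Promislow/Bowditch for $\Delta_P$, suitably generalized), then reduce the general case to a minimal non-locally-indicable quotient, whose holonomy one can take to be elementary (a $p$-group with $V^G=0$, or the smallest such), and package the construction uniformly.

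\smallskip
\noindent\textbf{Main obstacle.} The hard part will be the \emph{uniformity} of the ravel construction across all possible holonomy groups $G$ with $V^G=0$: writing down one family $A$ and checking the no-extremal-point condition in a way that does not degenerate into case analysis over the finite subgroup structure of $G$. I expect the right move is an inductive/reduction argument — peel off a central $\ZZ/p$ in $G$ on which the induced action is still fixed-point-free on a subspace, build a ravel there, and ``lift'' it, using Bowditch's principle that diffuseness passes to extensions (contrapositively, a ravel in a quotient-by-torsion-free-kernel situation pulls back) — so that the only genuinely hand-built ravel is the one for $G=\ZZ/p$. Matching the metric criterion of Lemma~\ref{dist_move_crit} to the combinatorics of that base case, and verifying the diameter-realizing pair argument fails at every point of $A$, is where the real work lies; everything else should be bookkeeping with the integral holonomy representation.
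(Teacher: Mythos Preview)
Your setup for the hard direction is exactly right and matches the paper: reduce to a finitely generated torsion-free $\Gamma$ with $b_1(\Gamma;\QQ)=0$, realize it as a crystallographic group with translation lattice in $V$ and holonomy $G$, and translate $b_1=0$ into $V^G=0$.

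The proposed construction of the ravel, however, has a genuine gap. Your plan is to reduce inductively to the base case of cyclic holonomy $G=\ZZ/p$ and hand-build a ravel there. But that base case is \emph{vacuous}: a torsion-free crystallographic group with cyclic holonomy always has $b_1>0$. Indeed, if $G=\langle g\rangle$ has order $m$ and $\gamma\in\Gamma$ projects to $g$, then $\gamma^m$ is a translation by some $v\in V$, and since $\gamma$ commutes with $\gamma^m$ we get $g\cdot v=v$, so $v\in V^G$. If $V^G=0$ then $v=0$ and $\gamma^m=1$, contradicting torsion-freeness. So there is no ravel to build in the cyclic case, and your induction has no place to start. (This is a shadow of the later Theorem on holonomy: the smallest primitive holonomy groups are non-cyclic, e.g.\ $(\ZZ/2\ZZ)^2$ for the Promislow group.) The lifting step is also shaky: Bowditch's extension result, contrapositively, tells you only that \emph{some} ravel exists in the extension, not that a specific one pulls back.

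The paper's argument avoids all structural analysis of $G$. It takes the ravel to be a large metric ball
\[
B(r,0)=\{\gamma\in\Gamma : \|\gamma\cdot 0\|\le r\}
\]
and proves directly that no element is extremal. The single idea is a compactness bound on the unit sphere: since $V^G=0$, for every $u\in V$ there is $g\in G$ with $gu\neq -u$ failing badly enough that $\|gu+u\|\le 2\delta\|u\|$ for a fixed $\delta<1$ independent of $u$. Given any $\gamma\in B(r,0)$ with $u=\gamma\cdot 0$, pick such a $g$, lift it to some $\gamma_0\in\Gamma$, and then adjust by a translation $t$ (chosen near the midpoint of $u$ and $-gu$) so that both $t\gamma_0\cdot u$ and $(t\gamma_0)^{-1}\cdot u$ land within $r_0+\delta r$ of the origin. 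For $r$ large this is $\le r$, so $\gamma$ is not extremal. No induction, no case analysis on $G$, and the ravel is completely explicit.
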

   It is well known that local indicability implies diffuseness. It suffices to prove the converse.
   Let $\Gamma_0$ be a virtually abelian group and assume that it is not locally indicable.
   We can find a finitely generated subgroup $\Gamma \subset \Gamma_0$ with vanishing first rational Betti number.
   If $\Gamma$ contains torsion, it is not diffuse. Thus we assume that $\Gamma$ is torsion-free.
   Since a finitely generated torsion-free virtually abelian group is crystallographic,
   the theorem follows from the next lemma.
   
   \begin{lemma}\label{ravel_crystal}
     Let $\Gamma$ be a crystallographic group acting on a euclidean space $E$.
     If $b_1(\Gamma)=0$, then for all $e \in E$ and all sufficiently large $r >0 $ the set 
      \begin{equation*}
           B(r,e) = \{\:\gamma \in \Gamma\:|\: \|\gamma e - e\| \leq r  \:\}
      \end{equation*}
      is a ravel.
   \end{lemma}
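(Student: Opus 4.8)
The plan is to place the basepoint at the origin and reformulate everything in terms of the orbit $\Gamma\cdot 0\subset E=\RR^n$. Writing each element of the crystallographic group as a pair $(v,A)$ with translation part $v\in\RR^n$ and linear (holonomy) part $A\in\OO(n)$, acting by $(v,A)\cdot z=Az+v$, an element $\gamma$ fails to be extremal in $B(r,0)$ exactly when there is some $g\neq 1$ with $g\gamma,\ g^{-1}\gamma\in B(r,0)$. Since this depends only on the point $x:=\gamma\cdot 0$, the whole lemma reduces to the claim: for every orbit point $x\in\Gamma\cdot 0$ with $\|x\|\le r$ there is $g=(w,B)\neq 1$ in $\Gamma$ with $\|g\cdot x\|\le r$ and $\|g^{-1}\cdot x\|\le r$. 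A direct computation gives $g\cdot x=Bx+w$ and $\|g^{-1}\cdot x\|=\|x-w\|$, so I must produce $g$ with $\|Bx+w\|\le r$ and $\|x-w\|\le r$. Proving this for every $x$, including those on the bounding sphere $\|x\|=r$, shows that $B(r,0)$ has no extremal point and is therefore a ravel.

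The first step is to convert the hypothesis $b_1(\Gamma)=0$ into a statement about the holonomy group $G=\Gamma/L$, where $L=\Gamma\cap\RR^n$ is the translation lattice. By the five-term exact sequence attached to $1\to L\to\Gamma\to G\to 1$, and because the finite group $G$ has no rational homology in positive degrees, one obtains $H_1(\Gamma;\QQ)\cong (L\otimes\QQ)_G\cong (\RR^n)^G$; hence $b_1(\Gamma)=0$ is equivalent to the holonomy representation having no nonzero fixed vector. For an orthogonal representation this is in turn equivalent to $\sum_{A\in G}A=0$, since $\frac{1}{|G|}\sum_{A\in G}A$ is the orthogonal projection onto $(\RR^n)^G$. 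This is the key input; note that it already forces $G\neq\{1\}$, so genuine holonomy is available.

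The heart of the argument is a lens construction exploiting the holonomy; pure translations cannot suffice, since for any lattice the farthest points of $B(r,0)$ are extremal (this is exactly why $\ZZ^n$, with $b_1\neq 0$, yields a ball that is not a ravel). Fix $x\neq 0$ with $\|x\|\le r$. From $\sum_{A}Ax=0$ and $\langle x,x\rangle=\|x\|^2>0$ I get $\sum_{A\neq 1}\langle Ax,x\rangle=-\|x\|^2<0$, so some $B\neq 1$ satisfies $\langle Bx,x\rangle<0$. Then the closed balls $\bar B_r(x)$ and $\bar B_r(-Bx)$ have centres at distance $D=\|(I+B)x\|$ with $D^2=2\|x\|^2+2\langle x,Bx\rangle<2\|x\|^2\le 2r^2$, so $D<\sqrt 2\,r$. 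Consequently their intersection contains the ball of radius $r-D/2>(1-\tfrac{1}{\sqrt 2})r$ about the midpoint. Once $r$ exceeds $\rho/(1-\tfrac{1}{\sqrt 2})$, where $\rho$ is the covering radius of $L$, this radius exceeds $\rho$, so the coset $\tau_B+L$ of translation parts of elements with linear part $B$ meets the lens; choosing $w$ there and setting $g=(w,B)$ gives $g\neq 1$ (nontrivial holonomy) with $\|g\cdot x\|=\|Bx+w\|\le r$ and $\|g^{-1}\cdot x\|=\|x-w\|\le r$. The remaining point $x=0$ (and, in the presence of torsion, any $\gamma$ fixing the origin) is handled by a single short lattice translation $g=(w,I)$ with $w\in L\setminus\{0\}$, $\|w\|\le r$.

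The step I expect to be the crux is the passage from $b_1(\Gamma)=0$ to $\sum_{A\in G}A=0$, together with the realization that one must move $x$ by a holonomy element rather than a mere translation: the condition $\langle Bx,x\rangle<0$ is precisely what pulls the centre-distance $D$ below $\sqrt 2\,r$ and keeps the lens fat enough to capture a coset point \emph{uniformly} in $x$, even on the boundary sphere. The bookkeeping for ``sufficiently large $r$'' (comparing the lens radius with the covering radius of $L$) is then routine, and the finiteness and non-emptiness of $B(r,0)$ follow from proper discontinuity of the action.
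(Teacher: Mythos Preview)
Your proof is correct and follows essentially the same geometric strategy as the paper: use the absence of holonomy-fixed vectors to find, for each orbit point $x$, a holonomy element $B$ with $\|Bx+x\|$ substantially smaller than $2\|x\|$, and then use the covering radius of the translation lattice to locate a suitable translation part $w$ in the resulting lens $\bar B_r(x)\cap \bar B_r(-Bx)$. The only noteworthy difference is how the ``anti-fixing'' element is produced: the paper invokes compactness of the unit sphere to obtain a uniform $\delta<1$ with $\min_{g\in G}\|gu+u\|\le 2\delta\|u\|$ for all $u$, whereas you extract it directly from the identity $\sum_{A\in G}A=0$, getting $\langle Bx,x\rangle<0$ and hence $\|Bx+x\|<\sqrt{2}\,\|x\|$. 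Your route is slightly more explicit (yielding the concrete threshold $r>\rho/(1-\tfrac{1}{\sqrt 2})$) and avoids the compactness step, but the underlying idea is identical.
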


   \begin{proof}
    We can assume $e=0 \in E$.
    Let $\Gamma$ be a non-trivial crystallographic group with vanishing first Betti number and let $\pi: \Gamma \to G$ be the projection
    onto the point group at $0$. The translation subgroup is denoted by $T$ and we fix some $r_0 >0$ so that for every $u \in E$
    there is $t\in T$ satisfying $\|u-t\| \leq r_0$.

    The first Betti number $b_1(\Gamma)$ is exactly the dimension of the space $E^G$ of $G$-fixed vectors.
    Thus $b_1(\Gamma)=0$ means that $G$ acts without non-trivial fixed points on $E$.
    Since every non-zero vector is moved by $G$, there is a real number $\delta < 1$ such that 
    for all $u \in E$ there is $g\in G$ such that 
    \begin{equation}\label{eq_approximation}
       \|gu+u\| \leq 2 \delta \|u\|. 
    \end{equation}
    For $r>0$ let $B_r$ denote the closed ball of radius $r$ around $0$. 
    Fix $u \in B_r$; we shall find $\gamma \in \Gamma$ such that $\|\gamma u\| \leq r$ and $\|\gamma^{-1}u\| \leq r$ provided $r$ is sufficiently large.
    We pick $g\in G$ as in \eqref{eq_approximation} and we choose some
    $\gamma_0 \in \Gamma$ with $\pi(\gamma_0) = g$. Define $w_0 = \gamma_0(0)$.
    
    We observe that for every two vectors $v_1, v_2 \in E$ with distance $d$, there is
    $x \in w_0 + T$ with
    \begin{equation*}
         \max_{i=1,2}( \|v_i-x\| ) \leq r_0 + \frac{d}{2}.
    \end{equation*}
    Indeed, the ball of radius $r_0$ around the midpoint of the line between $v_1$ and $v_2$ contains an element $x \in w_0 + T$.
    Apply this to the vectors $v_1 = u$ and $v_2 = -gu$ to find some $x = w_0 + t$. By construction we get $d \leq 2\delta r$.
    
    Finally we define $\gamma = t\cdot \gamma_0$ to deduce the inequalities
    \begin{equation*}
      \|\gamma u\| = \|gu + x\| = \|-gu - x\| \leq r_0 + \delta r
    \end{equation*}
    and
    \begin{equation*}
      \|\gamma^{-1} u\| = \|g^{-1}u - g^{-1}x\| = \|u - x\| \leq r_0 + \delta r.
    \end{equation*}
    As $\delta < 1$ the right hand side is less than $r$ for all sufficiently large~$r$.
   \end{proof}

\subsection{Diffuseness and the holonomy of crystallographic groups}\label{sec_Holonomy}

We take a closer look at the non-diffuse crystallographic groups and their holonomy groups.
It will turn out that for a given crystallographic group one can often decide from the holonomy group whether or not the group is diffuse.
In the following a \emph{Bieberbach group} is a non-trivial torsion-free crystallographic group.
Let $\Gamma$ be a Bieberbach group, it has a finite index normal maximal abelian subgroup $T \subset \Gamma$. Recall that the finite quotient
$G = \Gamma/T$ is called the holonomy group of $\Gamma$.
Since every finite group is the holonomy group of some Bieberbach group (by a result due to Auslander-Kuranishi \cite{AuslanderKuranishi}), 
this naturally divides the finite groups into three classes.
\begin{definition}
  A finite group $G$ is \emph{holonomy diffuse} if every Bieberbach group $\Gamma$ with holonomy group $G$ is diffuse.
  It is \emph{holonomy anti-diffuse} if every Bieberbach group $\Gamma$ with holonomy group $G$ is non-diffuse.
  Otherwise we say that $G$ is \emph{holonomy mixed}.
\end{definition}
For example, the finite group $(\ZZ/2\ZZ)^2$ is holonomy mixed.
In fact, the Promislow group $\Delta_P$ (also known as Hantzche-Wendt group or Passman group) is a non-diffuse \cite{Bowditch_diff} Bieberbach group with holonomy group $(\ZZ/2\ZZ)^2$ -- thus $(\ZZ/2\ZZ)^2$ is not holonomy diffuse.
On the other hand it is easy to construct diffuse groups with holonomy group $(\ZZ/2\ZZ)^2$ (cf.\ Lemma~\ref{lem_solvable-holonomy} below). 

In this section we prove the following algebraic characteristion of these three classes of finite groups.
\begin{theorem}\label{thm_holonomy}
 A finite group $G$ is
 \begin{enumerate}[(i)]
  \item \label{holantidiff}  holonomy anti-diffuse if and only if it is not solvable.
  \item \label{holdiff}  holonomy diffuse exactly if every Sylow subgroup is cyclic.
  \item \label{holmixed}  holonomy mixed if and only if it is solvable and has a non-cyclic Sylow subgroup.
 \end{enumerate}
\end{theorem}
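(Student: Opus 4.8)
The plan is to prove the three equivalences in Theorem~\ref{thm_holonomy} essentially simultaneously, observing that parts \eqref{holmixed} and the ``only if'' directions of \eqref{holantidiff} and \eqref{holdiff} are formal consequences once the two ``if'' directions are in hand. Indeed, every finite group is either non-solvable, or solvable with all Sylow subgroups cyclic, or solvable with a non-cyclic Sylow subgroup; so if I can show (a) every Bieberbach group with non-solvable holonomy is non-diffuse, and (b) every Bieberbach group with only cyclic Sylow subgroups is diffuse, then the ``only if'' direction of \eqref{holantidiff} follows (a holonomy anti-diffuse group cannot have all Sylow subgroups cyclic, since such groups admit \emph{some} diffuse Bieberbach group by (b); and by (a) it must be non-solvable---wait, I need the converse: if $G$ is solvable then by the mixed-case construction it is not anti-diffuse), and similarly for \eqref{holdiff}. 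The genuinely new content is therefore: (a), (b), and (c) for every solvable $G$ with a non-cyclic Sylow subgroup, construct both a diffuse and a non-diffuse Bieberbach group with holonomy $G$.

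For (a): if $G$ is not solvable and $\Gamma$ is a Bieberbach group with holonomy $G$, I want to produce a ravel. By Proposition~\ref{prop_diffusePolycyclic} (or directly Theorem~\ref{thm_LinnellWitteMorris}), a virtually abelian group is diffuse iff it is locally indicable; and by Lemma~\ref{ravel_crystal} a crystallographic group $\Gamma$ fails to be diffuse as soon as $b_1(\Gamma)=0$, i.e.\ as soon as the point group $G$ has no nonzero fixed vector on the defining euclidean space $E$. More generally $\Gamma$ is non-diffuse once it has a finitely generated subgroup with vanishing first Betti number; since $\Gamma$ contains the finite-index translation lattice $T$ on which $G$ acts, it suffices to find a $G$-invariant rational (hence real) subspace $V\subseteq E\otimes\QQ$ with $V^G=0$ and such that the corresponding sub-crystallographic group (the preimage in $\Gamma$ of $G$ acting on a suitable sublattice of $T\cap V$) is again Bieberbach---then that subgroup is non-diffuse by Lemma~\ref{ravel_crystal}, hence so is $\Gamma$. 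This reduces (a) to a purely representation-theoretic fact: a non-solvable finite group $G$ admits a faithful $\QQ$-representation (a fortiori a $\ZZ$-lattice representation) with no nonzero fixed vectors---indeed one can take the augmentation submodule of the regular representation, or restrict to any faithful rational irreducible constituent; one then has to be a little careful to pass to a torsion-free lattice group, but the Auslander--Kuranishi / Zassenhaus machinery (or simply taking a high-power congruence sublattice of $T$) handles torsion-freeness.

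For (b): if every Sylow subgroup of $G$ is cyclic, then $G$ is metacyclic, hence solvable with cyclic commutator quotient acting, and I want to show every Bieberbach $\Gamma$ with holonomy $G$ is diffuse. By Theorem~\ref{thm_LinnellWitteMorris} it is enough to show $\Gamma$ is locally indicable, i.e.\ every finitely generated (equivalently every finite-index, since subgroups of crystallographic groups are crystallographic) subgroup $\Gamma'\le\Gamma$ has $b_1(\Gamma')>0$. Now $\Gamma'$ is crystallographic with holonomy $G'\le G$, and every subgroup and quotient of a group with cyclic Sylow subgroups again has cyclic Sylow subgroups, so it suffices to prove: if $G$ has all Sylow subgroups cyclic, then \emph{every} $\ZZ[G]$-lattice $L$ arising as the translation lattice of a Bieberbach group has $L^G\otimes\QQ\neq 0$, equivalently the trivial $\QQ$-representation occurs in $L\otimes\QQ$. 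This is where I expect the main obstacle, and I would attack it via the structure theory of $\ZZ_p[G]$-lattices for groups with cyclic Sylow $p$-subgroups (these have finite representation type, by work going back to Heller--Reiner and Diederichsen, so the indecomposable lattices are completely classified), combined with the cohomological constraint that a Bieberbach group corresponds to a class in $H^2(G;L)$ whose restriction to every cyclic subgroup is ``special'' (the Bieberbach/torsion-free condition), plus the fact that for cyclic $G$ one has the classical result that crystallographic groups with cyclic holonomy are diffuse---indeed one can even hope to show directly that for such $G$ no faithful $\ZZ[G]$-lattice with $L^G=0$ can carry a torsion-free extension class, so that $b_1(\Gamma)=0$ is impossible. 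The cyclic-Sylow hypothesis is exactly what makes $H^2$ and the lattice category small enough to push this through; without it (the mixed case) the obstruction genuinely fails, which is why (c) then produces non-diffuse examples by imitating the Promislow construction---taking $G$ acting on a lattice without fixed vectors, which exists precisely when some Sylow subgroup is non-cyclic and therefore admits a fixed-point-free rational representation---and diffuse examples by inflating any diffuse Bieberbach group along a surjection from a larger-dimensional construction, or simply by acting $G$ on $\ZZ\oplus L$ with trivial action on the first factor, which keeps $b_1>0$ for the whole group and all its subgroups whose holonomy still surjects onto... actually one must check local indicability is preserved, and the cleanest route is: realize $G$ as holonomy on $\ZZ[G]$ (regular representation), whose Bieberbach extensions are automatically locally indicable since $\ZZ[G]$ restricted to any subgroup still contains a copy of the trivial module.
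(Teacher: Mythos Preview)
Your proposal has several genuine gaps. For (a) you cite Proposition~\ref{prop_diffusePolycyclic} but do not actually use it: a diffuse Bieberbach group is strongly polycyclic, hence solvable, so its holonomy quotient is solvable---that is the entire argument. Your subspace construction is confused (the ``preimage in $\Gamma$ of $G$ acting on a sublattice of $T\cap V$'' does not define a subgroup of $\Gamma$, since elements outside $T$ have translational parts that need not lie in $V$), and by the end you are building a \emph{new} lattice via Auslander--Kuranishi, which says nothing about the given~$\Gamma$. For (b), the key missing tool is the Hiller--Sah characterization of \emph{primitive} finite groups (holonomy groups of Bieberbach groups with $b_1=0$): a group is primitive if and only if no cyclic Sylow subgroup admits a normal complement. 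Combined with Burnside's normal $p$-complement theorem for the smallest prime divisor, this shows that any group with all Sylow subgroups cyclic---and hence every subgroup of it---fails to be primitive, so every Bieberbach group with such holonomy is locally indicable. Your suggested attack via the representation type of $\ZZ_p[G]$ is far harder and, as you admit, not carried out.

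The most concrete error is your diffuse construction in (c): if the translation lattice is $\ZZ[G]$ then Shapiro's lemma gives $H^2(G,\ZZ[G])\cong H^2(\{1\},\ZZ)=0$, so the only extension is the split one $\ZZ[G]\rtimes G$, which contains $G$ and is therefore not torsion-free---there is no Bieberbach group on the regular representation at all. The paper instead constructs diffuse Bieberbach groups for solvable $G$ by induction on the derived length, using fibre products over abelian quotients (Lemma~\ref{lem_solvable-holonomy}). Likewise, for the non-diffuse half of (c), ``taking $G$ acting on a lattice without fixed vectors'' is not enough: one must also produce a \emph{special} class in $H^2$ (one restricting nontrivially to every cyclic subgroup), which is precisely what primitivity of a normal subgroup provides and what the paper exploits via induced modules and Shapiro's lemma (Lemma~\ref{lem_holdiff}, implication (c)$\Rightarrow$(a)).
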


\begin{remark}
  A finite group $G$ with cyclic Sylow subgroups is meta-cyclic (Thm.~9.4.3 in \cite{Hall_Groups}). 
  In particular, such a group $G$ is solvable.
\end{remark}

The proof of this theorem will be given as a sequence of lemmata below.
It suffices to prove the assertions \eqref{holantidiff} and \eqref{holdiff}.
One direction of \eqref{holantidiff} is easy. By Proposition \ref{prop_diffusePolycyclic}
a diffuse Bieberbach group is solvable and thus cannot have a finite non-solvable quotient, i.e.~a non-solvable group is holonomy anti-diffuse.
For \eqref{holantidiff} it remains to verify that every finite solvable group is the holonomy of some diffuse Bieberbach group; this will be done in Lemma \ref{lem_solvable-holonomy}.

In order to prove \eqref{holdiff},
we shall use a terminology introduced by Hiller-Sah~\cite{HillerSah1986}.
\begin{definition}
 A finite group $G$ is \emph{primitive} if it is the holonomy group of a Bieberbach group with finite abelianization.
\end{definition}
Statement \eqref{holdiff} of the theorem will follow from the next lemma.
\begin{lemma}\label{lem_holdiff}
Let $G$ be a finite group. The following statements are equivalent.
\begin{enumerate}[  ~~(a)]
 \item $G$ is not holonomy diffuse.\label{lem_item_holdiff}
 \item $G$ has a non-cyclic Sylow subgroup.\label{lem_item_sylow}
 \item $G$ contains a normal primitive subgroup.\label{lem_item_primitve}
\end{enumerate}
\end{lemma}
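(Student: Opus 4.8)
The plan is to establish the cycle $(\ref{lem_item_holdiff}) \Rightarrow (\ref{lem_item_sylow}) \Rightarrow (\ref{lem_item_primitve}) \Rightarrow (\ref{lem_item_holdiff})$, where the middle implication is the genuinely group-theoretic heart of the argument and the other two are comparatively soft. Throughout I would freely use the Hiller--Sah correspondence between Bieberbach groups with holonomy $G$ and certain cohomology classes: a Bieberbach group with holonomy $G$ is the same as a faithful $\ZZ G$-lattice $M$ (the translation lattice) together with a special cohomology class in $H^2(G;M)$, and the abelianization of the resulting Bieberbach group $\Gamma$ is finite precisely when $M^G = 0$ and (a mild extra condition on the class). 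So $G$ is primitive iff it admits a faithful $\ZZ G$-lattice $M$ with $M \otimes \QQ$ having no trivial summand, carrying a special class.

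For $(\ref{lem_item_holdiff}) \Rightarrow (\ref{lem_item_sylow})$ I would argue the contrapositive: if every Sylow subgroup of $G$ is cyclic, then $G$ is holonomy diffuse. By the remark, such a $G$ is metacyclic, hence solvable, and in fact (Hall's theorem) $G = \langle a\rangle \rtimes \langle b\rangle$ with both factors cyclic of coprime order. For any Bieberbach group $\Gamma$ with holonomy $G$, I want to produce a surjection $\Gamma \to \ZZ$ on every finitely generated subgroup, i.e.\ show $\Gamma$ is locally indicable, and then invoke Theorem \ref{thm_LinnellWitteMorris}. The point is that for a group $G$ all of whose Sylow subgroups are cyclic, every $\ZZ G$-lattice $M$ has $H^2(G;M)$ controlled so tightly that one can always find a finite-index subgroup of $\Gamma$ surjecting onto $\ZZ$; concretely, any torsion-free crystallographic group with such holonomy has $b_1 > 0$ because a faithful integral representation of a metacyclic group with coprime cyclic factors always has a nonzero fixed space after passing to an appropriate cyclic subgroup of the holonomy, and one bootstraps local indicability from there using that subgroups of $\Gamma$ are again crystallographic with holonomy a subquotient of $G$ (still with cyclic Sylow subgroups). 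This is the step I expect to require the most care, since one must handle all lattices $M$ at once, not a single convenient one.

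For $(\ref{lem_item_sylow}) \Rightarrow (\ref{lem_item_primitve})$: suppose some Sylow $p$-subgroup $P \le G$ is non-cyclic. Then $P$ contains a normal subgroup isomorphic to $(\ZZ/p)^2$, and more usefully one shows $G$ has a normal subgroup $N$ which is \emph{primitive} — the natural candidate is to take $N$ to be generated by the normal closure of a suitable elementary abelian $p$-subgroup, or to invoke the Hiller--Sah analysis directly: Hiller and Sah characterize primitive groups, and among their results is that a $p$-group is primitive iff it is non-cyclic, and that primitivity is inherited and detected on normal subgroups in the way needed here. So I would cite/rederive that $(\ZZ/p)^2$ is primitive (exhibit the standard rank-one-over-$\FF_p$-obstruction lattice, e.g.\ the augmentation-type lattice, with a special class and no trivial rational summand), then note that the minimal normal subgroup of $G$ contained in $P$ contains such a $(\ZZ/p)^2$ or is handled by the $p$-group case, and that a group containing a normal primitive subgroup is what we want.

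For $(\ref{lem_item_primitve}) \Rightarrow (\ref{lem_item_holdiff})$: let $N \trianglelefteq G$ be normal and primitive, so there is a Bieberbach group $\Gamma_N$ with holonomy $N$ and finite abelianization, corresponding to a faithful $\ZZ N$-lattice $L$ with $L\otimes\QQ$ having no trivial summand and a special class $\omega \in H^2(N;L)$. I would induce up: form the $\ZZ G$-lattice $M = \ZZ G \otimes_{\ZZ N} L$ (which is faithful and, since $N$ is normal, still has trivial rational fixed space — here one uses $M\otimes\QQ = \mathrm{Ind}_N^G(L\otimes\QQ)$ and Frobenius reciprocity: $\hom_G(\QQ, M\otimes\QQ) = \hom_N(\QQ, L\otimes\QQ) = 0$), and push the class $\omega$ to a special class in $H^2(G;M)$ via the composite of corestriction-type maps, checking it stays special (this is where Shapiro's lemma $H^2(G;\mathrm{Ind}_N^G L) \cong H^2(N;L)$ does the work). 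The resulting Bieberbach group $\Gamma$ has holonomy $G$ and finite abelianization, hence $b_1(\Gamma)=0$, so by Lemma \ref{ravel_crystal} it contains a ravel and is not diffuse; therefore $G$ is not holonomy diffuse. The main obstacle in this last step is verifying that inducing the cohomology class preserves the "special/Bieberbach" condition (faithfulness of the action together with the class being such that the extension is torsion-free); this is essentially bookkeeping with Shapiro's lemma and the explicit description of when a crystallographic group is torsion-free, but it needs to be done carefully.
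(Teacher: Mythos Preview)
Your plan for $(\ref{lem_item_primitve}) \Rightarrow (\ref{lem_item_holdiff})$ has a genuine gap, not mere bookkeeping. The Shapiro class $\gamma \in H^2(G, \ind_N^G L)$ is essentially \emph{never} special when $N \neq G$: if $C \le G$ is cyclic of prime order with $C \not\subseteq N$ (so $C \cap N = \{1\}$, since $N$ is normal), then by Mackey's formula $\res_G^C(\ind_N^G L)$ is a free $\ZZ[C]$-module, whence $H^2(C, \ind_N^G L) = 0$ and the restriction of \emph{every} class to $C$ vanishes. The extension defined by $\gamma$ therefore has torsion, and your appeal to Lemma~\ref{ravel_crystal} never gets off the ground. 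The paper repairs this by enlarging the lattice: for each such $C$ one adjoins a summand $M_C = \ind_C^G(\ZZ)$ together with a class $\alpha_C$ that restricts nontrivially to $C$, and takes $\delta = \gamma \oplus \bigoplus_C \alpha_C$. The price is that the enlarged lattice has nonzero $G$-invariants, so the resulting Bieberbach group $\Gamma$ does \emph{not} have $b_1(\Gamma)=0$ as you wanted. Instead one observes that each $(M_C)|_N$ is $\ZZ[N]$-free (again because $C\cap N = \{1\}$), so $\res_G^N(\delta)$ lives entirely in the $\ind_N^G L$ summand; hence $\Gamma$ contains a Bieberbach subgroup with holonomy $N$ and vanishing first Betti number, and one concludes that $\Gamma$ is not locally indicable via Theorem~\ref{thm_LinnellWitteMorris} rather than by applying Lemma~\ref{ravel_crystal} to $\Gamma$ itself.

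Your sketch for $(\ref{lem_item_sylow}) \Rightarrow (\ref{lem_item_primitve})$ is also too loose: the normal closure of an elementary abelian $p$-subgroup need not satisfy the Hiller--Sah criterion for primitivity at primes other than $p$. The paper instead takes $H$ to be the subgroup generated by \emph{all} Sylow $p$-subgroups of $G$ (automatically normal) and verifies the criterion directly: its Sylow $p$-subgroups are those of $G$, hence non-cyclic, and for any other prime $p'$ a normal $p'$-complement in $H$ would contain every Sylow $p$-subgroup and therefore equal $H$. For $(\ref{lem_item_holdiff}) \Rightarrow (\ref{lem_item_sylow})$ your contrapositive is workable, but the direct route is cleaner: a non-diffuse Bieberbach group with holonomy $G$ contains a subgroup $\Gamma_0$ with $b_1(\Gamma_0)=0$, whose holonomy $G_0$ is then primitive; Burnside's normal $p$-complement theorem forces the Sylow subgroup of $G_0$ for the smallest prime to be non-cyclic, and this $p$-group embeds (via the projection $\Gamma \to G$) into a Sylow $p$-subgroup of $G$.
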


We frequently use the following notion: A cohomology class $\alpha \in H^2(G,A)$ (for some finite group $G$ and some $G$-module $A$) is called \emph{special}
if it corresponds to a torsion-free extension of $G$ by $A$. Equivalently, if $A$ is free abelian, the restriction of $\alpha$ to any cyclic subgroup of $G$ is non-zero.

\begin{proof}
  Hiller-Sah \cite{HillerSah1986} obtained an algebraic characteristion of primitive groups.
  They showed that a finite group is primitive exactly if it does not contain a cyclic Sylow $p$-subgroup 
  which admits a normal complement (see also \cite{CliffWeiss1989} for a different criterion).
  
  \eqref{lem_item_holdiff} $\implies$ \eqref{lem_item_sylow}: Assume $G$ is not holonomy diffuse and take a non-diffuse Bieberbach group $\Gamma$ with holonomy group $G$.
  As $\Gamma$ is not locally indicable we find a non-trivial subgroup $\Gamma_0 \leq \Gamma$ with $b_1(\Gamma_0) = 0$.
  The holonomy group $G_0$ of $\Gamma_0$ is primitive. Let $p$ be the smallest prime divisor of $|G_0|$. The Sylow $p$-subgroups of $G_0$ are not cyclic, since
  otherwise they would admit a normal complement (by a result of Burnside \cite{Burnside1895}).
  Let $\pi: \Gamma \to G$ be the projection. The image $\pi(\Gamma_0)$ has $G_0$ as a quotient and hence $\pi(\Gamma_0)$ also has non-cyclic Sylow $p$-subgroups.
  As every $p$-group is contained in a Sylow $p$-subgroup, we deduce that the Sylow $p$-subgroups of $G$ are not cyclic.
  
  \medskip
   \eqref{lem_item_sylow} $\implies$ \eqref{lem_item_primitve}: Let $p$ be a prime such that the Sylow $p$-subgroups of $G$ are not cyclic.
   Consider the subgroup $H$ of $G$ generated by all $p$-Sylow subgroups. The group $H$ is normal in $G$ and we claim that it is primitive.
   The Sylow $p$-subgroups of $H$ are precisely those of $G$ and they are not cyclic. Let $p'$ be a prime divisor of $|H|$ different from $p$.
   Suppose there is a (cyclic) Sylow $p'$-subgroup $Q$ in $H$ which admits a normal complement $N$.
   As $H/N$ is a $p'$-group, the Sylow $p$-subgroups of $H$ lie in $N$. By construction $H$ is generated by its Sylow $p$-subgroups and so $N = H$.
   This contradicts the existence of such a Sylow $p'$-subgroup.
   
\medskip
\eqref{lem_item_primitve} $\implies$ \eqref{lem_item_holdiff}: 
Assume now that $G$ contains a normal subgroup $N \normal G$ which is primitive. We show that $G$ is not holonomy diffuse.
Since $N$ is primitive, there exists Bieberbach group $\Lambda$ with point group $N$ and with $b_1(\Lambda,\QQ)=0$.
Let $A$ be the translation subgroup of $\Lambda$ and let $\alpha \in H^2(N,A)$ be the special class corresponding to the extension $\Lambda$.
The vanishing Betti number $b_1(\Lambda,\QQ)=0$ is equivalent to $A^N = \{0\}$.

Consider the induced $\ZZ[G]$-module $B := \ind_N^G(A)$. Let $T$ be a transversal of $N$ in $G$ containing $1_G$. If we restrict the action on $B$ to $N$ we obtain
\begin{equation*}
   B_{|N} = \bigoplus_{g \in T} A(g)
\end{equation*}
where $A(g)$ is the $N$-module obtained from $A$ by twisting with the action with $g$, i.e. $h \in N$ acts by $g^{-1}hg$ on $A$.
In particular, $B^N = \{0\}$ and $A = A(1_G)$ is a direct summand of $B_{|N}$.

Observe that every class in $H^2(N,B)$ which projects to $\alpha \in H^2(N,A)$ is special and defines thus a Bieberbach group with finite abelianization.
Shapiro's isomorphism $\sh^2\colon H^2(G,B) \to H^2(N,A)$ is the composition of the restriction $\res_G^N$ and the projection $H^2(N,B) \to H^2(N,A)$.
We deduce that there is a class $\gamma \in H^2(G,B)$ which maps to some special class $\beta \in H^2(N,B)$ (which projects onto $\alpha \in H^2(N,A)$).
Let $\Lambda'$ be the Bieberbach group (with $b_1(\Lambda') = 0$) corresponding to $\beta$.
The group corresponding to $\gamma$ might not be torsion-free, so we need to vary $\gamma$ so that it becomes a special class.

Let $\frH$ be the collection of all cyclic prime order subgroups $C$ of $G$ which intersect $N$ trivially.
For each $C \in \frH$ we define 
\begin{equation*}
     M_C := \ind_C^G(\ZZ)
\end{equation*}
where $C$ acts trivially on $\ZZ$.
The group $N$ acts freely on $C\backslash G$, since $C\cap N = \{1_G\}$. Therefore $(M_C)_{|N}$ is a free $\ZZ[N]$-module.
We define the $\ZZ[G]$-module
\begin{equation*}
    M = B \oplus \bigoplus_{C \in \frH} M_C.
\end{equation*}
Using Shapiro's Lemma we find classes $\alpha_C \in H^2(G,M_C)$ which restrict to non-trivial classes in $H^2(C,M_C)$.
Consider the cohomology class $\delta := \gamma \oplus \bigoplus_{C \in \frH} \alpha_C \in H^2(G,M)$. 

The class $\delta$ is special, as can be seen as follows. For every $C \in \frH$ this follows from the fact that $\alpha_C$ restricts non-trivially to $C$.
For the cyclic subgroups $C \leq N$ this holds since the restriction of $\gamma$ to $N$ is special.
Consequently $\delta$ defines a Bieberbach group $\Gamma$ with holonomy group $G$.

Finally, we claim that $\res_G^N(\delta) = i_*(\res_G^N(\gamma))$ where $i: B \to M$ is the inclusion map.
Indeed, $H^2(N,M_C) = 0$ since $M_C$ is a free $\ZZ[N]$-module.
Since $\res_G^N(\gamma) = \beta$ we conclude that $\Gamma$ contains the group $\Lambda'$ as a subgroup and thus $\Gamma$ is not locally indicable.
\end{proof}

   We are left with constructing diffuse Bieberbach groups for a given solvable holonomy group.
   We start with a simple lemma concerning fibre products of groups.
   For $0 \leq i \leq n$ let $\Gamma_i$ be a group with a surjective homomorphism $\psi_i$ onto some fixed group $G$.
   The fibre product $\times_G \Gamma_i$ is defined as a subgroup of the direct product $\prod_{i} \Gamma_i$ by
   \begin{equation*}
         \times_G \Gamma_i := \{\:(\gamma_i)_i \in \prod^n_{i=0} \Gamma_i\:|\: \psi_i(\gamma_i) = \psi_0(\gamma_0) \: \text{ for all $i$}\:\}.
   \end{equation*}
    In this setting we observe the following
   \begin{lemma}\label{lem:diffuseFibre}
      If $\Gamma_0$ is diffuse and $\ker{\psi_i} \subset \Gamma_i$ is diffuse for all $i \in\{1,\dots,n\}$, then $\times_G \Gamma_i$ is diffuse.
   \end{lemma}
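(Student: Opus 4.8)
The plan is to realize $\times_G \Gamma_i$ as an iterated extension and apply the fact—recalled in the survey of Bowditch's paper—that an extension of a diffuse group by a diffuse group is diffuse (equivalently, the special case of the statement that a group with a diffuse action having diffuse stabilizers is diffuse). Concretely, set $\Pi = \times_G \Gamma_i$ and consider the homomorphism $\Phi\colon \Pi \to \Gamma_0$ given by projection to the zeroth coordinate, $(\gamma_i)_i \mapsto \gamma_0$. Since each $\psi_i$ is surjective, $\Phi$ is surjective: given $\gamma_0 \in \Gamma_0$, pick for each $i \geq 1$ some $\gamma_i \in \Gamma_i$ with $\psi_i(\gamma_i) = \psi_0(\gamma_0)$, which is possible by surjectivity of $\psi_i$. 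The image $\Phi(\Pi) = \Gamma_0$ is diffuse by hypothesis.

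Next I would identify the kernel of $\Phi$. An element $(\gamma_i)_i$ lies in $\ker \Phi$ iff $\gamma_0 = 1$, which forces $\psi_i(\gamma_i) = \psi_0(1) = 1$ for all $i$, i.e.\ $\gamma_i \in \ker \psi_i$ for every $i \geq 1$ (and $\gamma_0 = 1$). Conversely any such tuple lies in $\Pi$ and in $\ker\Phi$. Hence
\begin{equation*}
   \ker \Phi \;\cong\; \prod_{i=1}^n \ker \psi_i,
\end{equation*}
which is a finite direct product of diffuse groups. A finite direct product of diffuse groups is diffuse: this is immediate from the extension fact by induction on $n$, writing $\prod_{i=1}^n \ker\psi_i$ as an extension of $\prod_{i=1}^{n-1}\ker\psi_i$ by $\ker\psi_n$ (the base case $n=1$ being trivial and $n=0$ giving the trivial group, which is diffuse). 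Therefore $\ker\Phi$ is diffuse.

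Finally, $\Pi$ sits in a short exact sequence $1 \to \ker\Phi \to \Pi \to \Gamma_0 \to 1$ with both $\ker\Phi$ and $\Gamma_0$ diffuse, so $\Pi = \times_G\Gamma_i$ is diffuse by the extension result quoted from Bowditch's paper. There is no real obstacle here; the only thing to be careful about is the bookkeeping identifying $\ker\Phi$ correctly (in particular that $\gamma_0$ is forced to be trivial, not merely to lie in some kernel) and the observation that surjectivity of the $\psi_i$ is exactly what makes $\Phi$ surjective—without it one would only get that $\Pi$ is an extension of $\Phi(\Pi) \leq \Gamma_0$, but a subgroup of a diffuse group is still diffuse, so even then the argument would go through. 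I would state the proof in this streamlined form, invoking "extensions of diffuse groups are diffuse" as already established.
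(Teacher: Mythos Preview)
Your proof is correct and follows exactly the same route as the paper: the paper simply writes down the short exact sequence $1 \to \prod_{i=1}^n \ker\psi_i \to \times_G \Gamma_i \to \Gamma_0 \to 1$ and invokes Bowditch's Theorem~1.2, while you spell out why this sequence exists and why the kernel is diffuse. Your added remarks (surjectivity of $\Phi$, the inductive argument for direct products, the observation that surjectivity of the $\psi_i$ is not strictly needed) are all correct elaborations of details the paper leaves implicit.
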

   \begin{proof}
       There is a short exact sequence
        \begin{equation*}
          1 \longrightarrow \prod^n_{i=1}\ker{\psi_i} \stackrel{j}{\longrightarrow} \times_G \Gamma_i \longrightarrow \Gamma_0 \longrightarrow 1
   \end{equation*}
       so the claim follows from Theorem 1.2 in \cite{Bowditch_diff}.     
    \end{proof}
    \begin{lemma}\label{lem:diffuseclasses}
       Let $G$ be a finite group and let $M_1, \dots, M_n$ be free $\ZZ$-modules with $G$-action.
      Let $\alpha_i \in H^2(G,M_i)$ be classes. If one of these classes defines a diffuse extension group of $G$, then the sum of the $\alpha_i$
      in $H^2(G,M_1\oplus \cdots \oplus M_n)$ defines a diffuse extension of $G$.
   \end{lemma}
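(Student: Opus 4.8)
The plan is to reduce Lemma \ref{lem:diffuseclasses} to the fibre product construction of Lemma \ref{lem:diffuseFibre}. Suppose, after reindexing, that $\alpha_1 \in H^2(G,M_1)$ defines a diffuse extension group $\Gamma_1$. For each $i \in \{1,\dots,n\}$ let $\Gamma_i$ denote the extension of $G$ by $M_i$ determined by $\alpha_i$, with structure map $\psi_i\colon \Gamma_i \to G$; note $\ker\psi_i = M_i$ is free abelian, hence diffuse. First I would observe that the sum $\alpha := \alpha_1 \oplus \cdots \oplus \alpha_n \in H^2(G, M_1\oplus\cdots\oplus M_n)$ is precisely the class whose associated extension group is the fibre product $\times_G \Gamma_i$. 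This is the standard fact that the Baer sum of extensions, viewed through the identification of $H^2(G,M_1)\oplus\cdots\oplus H^2(G,M_n)$ with $H^2(G,M_1\oplus\cdots\oplus M_n)$, corresponds to taking the fibre product of the corresponding extensions over $G$; one checks directly that $\times_G\Gamma_i$ is an extension of $G$ by $\bigoplus_i M_i$ and that its class restricts to $\alpha_i$ in each factor.

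Once this identification is in place, the result is immediate: reorder so that $\Gamma_0 := \Gamma_1$ is the diffuse factor and relabel the remaining $\Gamma_i$ as $\Gamma_1,\dots,\Gamma_{n-1}$; then $\Gamma_0$ is diffuse by hypothesis and each $\ker\psi_i = M_i$ is diffuse because it is free abelian, so Lemma \ref{lem:diffuseFibre} gives that $\times_G\Gamma_i$ is diffuse. Since this group is the extension of $G$ determined by $\sum_i \alpha_i$, we are done. Alternatively, and perhaps more cleanly, I would avoid invoking Lemma \ref{lem:diffuseFibre} as a black box and instead directly exhibit the short exact sequence
\begin{equation*}
 1 \longrightarrow M_2 \oplus \cdots \oplus M_n \longrightarrow \Gamma \longrightarrow \Gamma_1 \longrightarrow 1,
\end{equation*}
where $\Gamma$ is the extension group of $\alpha_1\oplus\cdots\oplus\alpha_n$: the quotient map $\Gamma \to \Gamma_1$ is induced by the projection $M_1\oplus\cdots\oplus M_n \to M_1$ together with the identity on $G$, which is well-defined because $\alpha_1\oplus\cdots\oplus\alpha_n$ maps to $\alpha_1$ under the induced map on $H^2$. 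The kernel is the free abelian group $M_2\oplus\cdots\oplus M_n$, hence diffuse, and $\Gamma_1$ is diffuse by hypothesis, so Theorem 1.2 of \cite{Bowditch_diff} (stability of diffuseness under extensions) applies.

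The only genuinely non-formal point — hence the step I expect to require the most care — is verifying that the extension associated to the direct sum class $\bigoplus_i \alpha_i$ really does surject onto $\Gamma_1$ with kernel $\bigoplus_{i\ge 2} M_i$, i.e. pinning down the functoriality of the "extension group" construction under the coefficient map $M_1\oplus\cdots\oplus M_n \twoheadrightarrow M_1$. This is standard homological algebra (the extension group of $f_*\alpha$ is the pushout of the extension group of $\alpha$ along $f$, and a pushout along a split surjection of modules splits off the complementary summand as the kernel), but it is worth spelling out since the whole lemma hinges on it; everything else is a direct appeal to the extension-stability of diffuseness and the diffuseness of free abelian groups.
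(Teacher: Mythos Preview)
Your proof is correct and follows essentially the same approach as the paper: the paper's proof is the single sentence ``Taking the sum of classes corresponds to the formation of fibre products of the associated extensions, so the claim follows from Lemma \ref{lem:diffuseFibre},'' and your first argument is exactly this with the homological identification spelled out more carefully. Your alternative route via the projection $\Gamma \to \Gamma_1$ and Bowditch's Theorem~1.2 is also fine (and is really just the content of Lemma~\ref{lem:diffuseFibre} unwound one level), so there is nothing to correct.
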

   \begin{proof}
       Taking the sum of classes corresponds to the formation of fibre products of the associated extensions, so the claim follows from Lemma \ref{lem:diffuseFibre}.
   \end{proof}

   \begin{lemma} \label{lem_solvable-holonomy}
   Every finite solvable group is the holonomy group of a diffuse Bieberbach group.
   \end{lemma}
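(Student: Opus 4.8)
The plan is to induct on the derived length of the finite solvable group $G$, using the machinery of special cohomology classes together with the diffuseness-of-extensions principle (Theorem 1.2 of \cite{Bowditch_diff}) in the form of Lemmata \ref{lem:diffuseFibre} and \ref{lem:diffuseclasses}. The base case is $G$ trivial, where the lattice $\ZZ^n$ itself is a diffuse Bieberbach group. For the inductive step, write $G$ with an abelian normal subgroup $K \normal G$ such that $Q = G/K$ has smaller derived length; by induction we have a diffuse Bieberbach group $\Lambda$ with holonomy $Q$, say $\Lambda$ fits in $1 \to B \to \Lambda \to Q \to 1$ for a free abelian $B$ with $H^2(Q,B)$-class $\alpha$.

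First I would produce, from $\alpha$ and the quotient map $G \to Q$, an extension of $G$ whose holonomy is $G$: inflate $\alpha$ along $G \to Q$ to a class $\inf(\alpha) \in H^2(G,B)$ (with $B$ viewed as a $G$-module via $G \to Q$). This gives an extension $1 \to B \to \wdt\Lambda \to G \to 1$. The issue is that $\wdt\Lambda$ need not be torsion-free: a cyclic subgroup $C \le G$ meeting $K$ nontrivially may restrict $\inf(\alpha)$ to zero (since the inflation kills everything on $K$). The remedy is the same ``padding'' device used in the proof of Lemma \ref{lem_holdiff}: for every cyclic prime-order subgroup $C \le G$ on which $\inf(\alpha)$ restricts trivially, adjoin an induced module $M_C = \ind_C^G(\ZZ)$ and, via Shapiro's Lemma, a class $\alpha_C \in H^2(G,M_C)$ restricting nontrivially to $C$. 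Then $\delta = \inf(\alpha) \oplus \bigoplus_C \alpha_C \in H^2(G, B \oplus \bigoplus_C M_C)$ is special — on each such $C$ by the $\alpha_C$ summand, and on every other cyclic subgroup because $\inf(\alpha)$ already restricts nontrivially there — and hence defines a Bieberbach group $\Gamma$ with holonomy $G$.

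It remains to check that $\Gamma$ is diffuse, and this is where Lemma \ref{lem:diffuseclasses} does the work: each $M_C$ is free over $\ZZ[C]$ but more to the point we only need that \emph{one} of the summands of $\delta$ gives a diffuse extension of $G$. For the $M_C$ summands the extension of $G$ by $M_C$ is, by Shapiro, induced from an extension of $C$ by $\ZZ$, i.e.\ essentially $\ZZ^{[G:C]}$ with $G$ permuting coordinates and a twist — concretely it is virtually $\ZZ^{[G:C]}$ and in fact strongly polycyclic, hence diffuse. By Lemma \ref{lem:diffuseclasses} (applied with that summand as the distinguished diffuse one), $\delta$ defines a diffuse extension of $G$, namely $\Gamma$. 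The main obstacle is precisely the torsion-freeness bookkeeping: one must be certain that inflating $\alpha$ does not destroy nonvanishing on cyclic subgroups lying in $K$ that were already handled inside $\Lambda$ — but a cyclic subgroup of $G$ contained in $K$ maps to the trivial subgroup of $Q$, so $\inf(\alpha)$ restricts trivially to it and it is caught by the padding; conversely a cyclic subgroup mapping onto a nontrivial cyclic subgroup of $Q$ inherits nonvanishing from the specialness of $\alpha$. I expect verifying that the restriction of $\inf(\alpha)$ to a cyclic $C$ is controlled purely by the image of $C$ in $Q$ (an inflation-restriction computation) to be the one spot requiring genuine care, everything else being a direct application of the earlier lemmata.
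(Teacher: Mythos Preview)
There is a genuine gap in your diffuseness argument. You claim that for each padding module $M_C = \ind_C^G(\ZZ)$ the extension $E_C$ of $G$ by $M_C$ via $\alpha_C$ is strongly polycyclic, hence diffuse. But $E_C$ is almost never torsion-free: if $C'\le G$ is any cyclic subgroup of prime order not conjugate to $C$, then $C'$ acts freely on $C\backslash G$, so $M_C|_{C'}$ is a free $\ZZ[C']$-module and $H^2(C',M_C)=0$. Thus $\alpha_C|_{C'}=0$ and $C'$ lifts to a torsion subgroup of $E_C$. So none of the summands $\alpha_C$ (nor $\inf(\alpha)$, which has torsion over all of $K$) defines a diffuse extension of $G$, and Lemma~\ref{lem:diffuseclasses} does not apply in the way you propose. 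The padding repairs torsion-freeness of the \emph{sum} $\delta$, but it does nothing for diffuseness of any individual summand, which is what that lemma needs.

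The paper's argument runs the induction in the opposite direction: it takes $G'\normal G$ to be the derived subgroup (so $G/G'$ is abelian), uses the induction hypothesis to get a diffuse class $\alpha\in H^2(G',M)$, and then \emph{induces} the module up to $G$ via Shapiro rather than inflating from a quotient. The resulting class $\beta\in H^2(G,\ind_{G'}^G M)$ has $\res_G^{G'}(\beta)=\alpha\oplus x$, so by Lemma~\ref{lem:diffuseclasses} (applied over $G'$, not $G$) the subgroup $\Lambda_0=\ker(\Gamma_0\to G/G')$ is diffuse. The abelian quotient $G/G'$ is then handled by writing it as a quotient of $\ZZ^k$ and taking the fibre product $\Gamma_0\times_{G/G'}\ZZ^k$, which is diffuse by Lemma~\ref{lem:diffuseFibre}. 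The key difference is that the paper never needs a single class to be diffuse over all of $G$; it only needs diffuseness over the subgroup $G'$ and then uses the fibre-product trick for the last abelian layer. Your inflation-plus-padding scheme does not provide an analogue of this last step.
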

   \begin{proof}
   
    We begin by constructing diffuse Bieberbach groups with given abelian holonomy group.
    Let $A$ be an abelian group and let $\Gamma_0$ be a Bieberbach group with holonomy group $A$.
    Write $A$ as a quotient of a free abelian group $\ZZ^k$ of finite rank with projection $\phi\colon \ZZ^k \to A$.
    By Lemma \ref{lem:diffuseFibre} the fibred product $\Gamma_0 \times_A \ZZ^k$ is a diffuse Bieberbach group with holonomy group $A$.
   
    Assume now that $G$ is solvable. We construct a diffuse Bieberbach group $\Gamma$ with holonomy group $G$.
    We will proceed by induction on the derived length of~$G$. The basis for the induction is given by the construction for abelian groups above.
    Let $G'$ be the derived group of $G$. By induction hypothesis there is a faithful $G'$-module $M$ and a ``diffuse'' class
    $\alpha \in H^2(G',M)$. Consider the induced module $B = \ind_{G'}^G(M)$. The restriction of $B$ to $G'$
    decomposes into a direct sum
   \begin{equation*}
        B_{|G'} \cong M \oplus X.
   \end{equation*}
   There is a class $\beta \in H^2(G,B)$ which maps to $\alpha$ under Shapiro's isomorphism $\sh^2\colon H^2(G,B)\to H^2(G',M)$.
   Due to this the restriction $\res_G^{G'}(\beta)$ decomposes as $\alpha \oplus x \in H^2(G',M)\oplus H^2(G',X)$. 
   By Lemma~\ref{lem:diffuseclasses} the class $\res_G^{G'}(\beta)$ is diffuse.

   Let $\Gamma_0$ be the extension of $G$ which corresponds to the class $\beta$. By what we have seen, the subgroup
   $\Lambda_0 = \ker(\Gamma_0 \to G/G')$ is diffuse.
   Finally, we write the finite abelian group $G/G'$ as a quotient of a free abelian group $\ZZ^k$.
   By Lemma~\ref{lem:diffuseFibre} the fibre product $\Gamma_0 \times_{G/G'} \ZZ^k$ is diffuse.
   In fact, it is a Bieberbach group with holonomy group $G$.
   
   \end{proof}

   \subsection{Non-diffuse Bieberbach groups in small dimensions}\label{sec_lowDimensions}
   
   In this section we briefly describe the classification of all Bieberbach groups in dimension $d \leq 4$ which are not diffuse.
   The complete classification of crystallographic groups in these dimensions is given in \cite{BBNWZ-Classification} and we refer to them
   according to their system of enumeration.
   
   In dimensions $2$ and $3$ the classification is very easy. 
   In dimension $d=2$ there are two Bieberbach groups and both of them are diffuse. In dimension $d=3$ there are exactly $10$ Bieberbach groups. 
   The only group among those with vanishing first rational Betti number is the Promislow (or Hantzsche-Wendt) group $\Delta_P$ (which is called 3/1/1/04 in \cite{BBNWZ-Classification}).
   
   Now we consider the case $d=4$, in this case there are $74$ Bieberbach groups.
   As a consequence of the considerations for dimensions $2$ and $3$, a Bieberbach group $\Gamma$ of dimension $d = 4$ is not diffuse 
   if and only if it has vanishing Betti number or contains the Promislow group $\Delta_P$.
   Vanishing Betti number is something that can be detected easily from the classification.
   So how can one detect the existence of a subgroup isomorphic to $\Delta_P$?
   The answer is given in the following lemma.
   \begin{lemma}
     Let $\Gamma$ be a Bieberbach group acting on $E = \RR^4$ and assume that $b_1(\Gamma) > 0$.
     Let $\pi: \Gamma \to G$ be the projection onto the holonomy group.
     Then $\Gamma$ is not diffuse if and only if it contains elements $g,h \in \Gamma$ such that
     \begin{enumerate}[ (i)]
      \item $S := \langle \pi(g),\pi(h) \rangle \cong (\ZZ/2\ZZ)^2$,
      \item $\dim E^S = 1$ and
      \item if $E = E^S \oplus V$ as $S$-module, then $g\cdot0$ and $h\cdot 0$ lie in $V$.
     \end{enumerate}
   \end{lemma}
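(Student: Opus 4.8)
The plan is to reduce the statement to the known facts about dimension $3$, namely that the only non-diffuse Bieberbach group of dimension $\leq 3$ is the Promislow group $\Delta_P$, which has holonomy $(\ZZ/2\ZZ)^2$ acting on $\RR^3$ with no non-zero fixed vector. Since $b_1(\Gamma)>0$ and the only non-diffuse Bieberbach groups in dimension $4$ either have vanishing Betti number or contain $\Delta_P$, the assertion is exactly that $\Gamma$ (with positive Betti number) contains a copy of $\Delta_P$ if and only if the listed combinatorial/geometric data $(g,h)$ can be found. So the proof splits into two implications.

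For the ``only if'' direction, I would assume $\Gamma$ is not diffuse and hence, by what was said above, contains a subgroup $P \cong \Delta_P$. The point group of $P$ inside $G$ is a quotient of $(\ZZ/2\ZZ)^2$; I would argue it is genuinely $(\ZZ/2\ZZ)^2$ (it cannot be smaller, since $\Delta_P$ has holonomy $(\ZZ/2\ZZ)^2$ and its translation lattice has rank $3$, so a smaller point group would force $P$ abelian or of Hirsch length mismatch). Writing $S := \langle \pi(g),\pi(h)\rangle$ for suitable generators $g,h$ of $P$, the $S$-module $E = \RR^4$ decomposes as $E^S \oplus V$. Since the translation lattice of $P$ is a rank-$3$ lattice on which $S$ acts as it does on the translations of $\Delta_P$ (no non-zero fixed vector), $V$ must contain this $3$-dimensional subspace; comparing dimensions forces $\dim E^S = 1$ and $V$ to be exactly the span of the translation lattice of $P$. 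The requirement that $g\cdot 0$ and $h\cdot 0$ lie in $V$ is then arranged by choosing the translational parts appropriately: after conjugating $P$ by a translation we may assume the ``center'' of the $\Delta_P$-configuration sits at the origin of $E^S$, so that the affine parts of $g$ and $h$ have no $E^S$-component. The main technical work here is to match the abstract structure of $\Delta_P$ (its holonomy action and the special cohomology class defining it) with the restriction of the $\Gamma$-action to $V$, i.e.\ to verify that the subgroup generated by $g,h$ under conditions (i)--(iii) really is isomorphic to $\Delta_P$ and not some other extension.

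For the ``if'' direction, I would suppose elements $g,h$ satisfying (i)--(iii) are given and show $\langle g,h\rangle$ (or a subgroup thereof) is a copy of $\Delta_P$, hence a ravel, hence $\Gamma$ is not diffuse. Here one computes directly: condition (i) says $\pi(g)^2 = \pi(h)^2 = 1$ and $[\pi(g),\pi(h)] = 1$ in $G$, so $g^2, h^2, [g,h]$ all lie in the translation subgroup $T$; conditions (ii) and (iii) pin down how $S$ acts on $V$ and where the translational parts live. The subgroup $\Lambda := \langle g, h, T\cap V \rangle$ then projects onto $S \cong (\ZZ/2\ZZ)^2$ with torsion-free kernel contained in $V$, and one checks that the induced extension of $(\ZZ/2\ZZ)^2$ by the rank-$3$ lattice $T \cap V$ reproduces the defining data of $\Delta_P$ (the special class is forced because $\Lambda$ is torsion-free and the $S$-action on $V$ has the right form). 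A clean way to organise this is to observe that $\Lambda$ is a $3$-dimensional Bieberbach group with holonomy $(\ZZ/2\ZZ)^2$ and vanishing first Betti number (by (ii)--(iii), $V^S = 0$), and then invoke the dimension-$3$ classification: the Promislow group is the unique such group. Then $\Lambda \le \Gamma$ is non-diffuse, so $\Gamma$ is non-diffuse.

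The main obstacle I expect is the bookkeeping in the ``if'' direction: from the holonomy and fixed-space conditions one must reconstruct enough of the integral structure to conclude that $T \cap V$ together with the $S$-action and the affine parts of $g,h$ really does give a torsion-free group of rank $3$ with vanishing Betti number (so that the $3$-dimensional classification can be applied). In particular one has to rule out the possibility that $T \cap V$ has rank $< 3$ or that $\langle g,h,T\cap V\rangle$ has torsion, and the hypothesis that $b_1(\Gamma)>0$ must be used correctly to distinguish ``$\Gamma$ contains $\Delta_P$'' from the other source of non-diffuseness (vanishing Betti number of $\Gamma$ itself). Once the group $\Lambda$ is correctly identified as a $3$-dimensional Bieberbach group with $b_1 = 0$, the rest is immediate from the low-dimensional classification recalled above.
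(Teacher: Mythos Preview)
Your overall strategy matches the paper's exactly: with $b_1(\Gamma)>0$, non-diffuseness is equivalent to containing $\Delta_P$, and then one translates ``contains $\Delta_P$'' into the conditions (i)--(iii). The paper's ``if'' direction is even shorter than yours: it simply notes that $\langle g,h\rangle$ (not $\langle g,h,T\cap V\rangle$) is a torsion-free finitely generated virtually abelian group, hence a Bieberbach group, which preserves the affine $3$-plane $V$ (by (iii) and $S$-invariance of $V$) and has $b_1=0$ since $V^S=0$; the dimension-$3$ classification then forces it to be $\Delta_P$. Your worries about rank and torsion dissolve once you observe that no Bieberbach group of dimension $\le 2$ has holonomy $(\ZZ/2\ZZ)^2$, so the Hirsch length is forced to be $3$.

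There is one genuine slip in your ``only if'' direction. You propose to arrange (iii) by conjugating $\Delta_P$ by a translation so that the affine parts lose their $E^S$-component. This does not work: if $\tau_v$ is translation by $v = v_1 + v_2$ with $v_1\in E^S$, $v_2\in V$, then conjugating $g$ by $\tau_v$ changes its translational part by $\pi(g)v - v = \pi(g)v_2 - v_2 \in V$, so the $E^S$-component is unaffected. The correct argument (implicit in the paper) is that the projection $E\to E/V\cong E^S$ is $S$-equivariant with trivial $S$-action on the target, so $g\mapsto \overline{g\cdot 0}$ is a homomorphism $\Delta_P\to\RR$; since $\Delta_P$ has finite abelianization this homomorphism vanishes, giving $g\cdot 0\in V$ automatically for every $g\in\Delta_P$. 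This is also where $b_1(\Gamma)>0$ is genuinely used for (ii): it gives $1\le\dim E^G\le\dim E^S$, while $E^S\cap V_0=0$ (with $V_0$ the span of the rank-$3$ translation lattice of $\Delta_P$) forces $\dim E^S\le 1$, hence $\dim E^S=1$ and $V=V_0$.
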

   \begin{proof}
    Since $b_1(\Gamma) >0$, the group $\Gamma$ is not diffuse exactly if it contains $\Delta_P$ as a subgroup.
    
    Assume $\Gamma$ contains $\Delta_P$, then $\Delta_P$ acts on a $3$-dimensional subspace $V$ and one can check (using $b_1(\Gamma) >0$) 
    that the holonomy of $\Delta_P$ embedds into $G$. The image will be denoted $S$.
    Take $g$ and $h$ in $\Gamma$ such $\pi(g)$ and $\pi(h)$ generate $S$, cleary $g\cdot0, h\cdot 0 \in V$.
    The group $S$ acts without non-trivial fixed points on $V$. Since $b_1(\Gamma) > 0$, thus $S$ acts trivially on a one-dimensional space $E^S$ and
    $E = E^S \oplus V$ is a decomposition as $S$-module.
    
    Conversely, if we can find $g,h \in \Gamma$ as above, then they generate a Bieberbach group of smaller dimension and with vanishing first Betti number.
    Hence they generate a group isomorphic to $\Delta_P$.
   \end{proof}

    Using this lemma and the results of the previous section one can decide for each of the $74$ Bieberbach groups whether they are diffuse or not.
    It turns out there are $16$ non-diffuse groups in dimension $4$, namely (cf.~\cite{BBNWZ-Classification}): 
    \begin{quote}
     04/03/01/006, 05/01/02/009, 05/01/04/006, 05/01/07/004, 06/01/01/049, 06/01/01/092, 06/02/01/027,
     06/02/01/050, 12/03/04/006, 12/03/10/005, 12/04/03/011, 13/04/01/023, 13/04/04/011, 24/01/02/004, 24/01/04/004, 25/01/01/010.
    \end{quote}
    The elementary abelian groups $(\ZZ/2\ZZ)^2$, $(\ZZ / 2\ZZ)^3$, the dihedral group $D_8$, the alternating group $A_4$ and
    the direct product group $A_4 \times \ZZ/2\ZZ$ occur as holonomy groups.
    Among these groups only four groups have vanishing first Betti number (these are 04/03/01/006, 06/02/01/027, 06/02/01/050 and 12/04/03/011).
    However, one can check that these groups contain the Promislow group as well. 
    In a sense the Promislow group is the only reason for Bieberbach groups in dimension $4$ to be non-diffuse
    (thus non of these groups has the unique product property).
    This leads to the following question:
    What is the smallest dimension $d_0$ of a non-diffuse Bieberbach group which does not contain $\Delta_P$?
    Clearly, such a group has vanishing first Betti number.
    Note that there is a group with vanishing first Betti number and holonomy $(\ZZ/3\ZZ)^2$ in dimension $8$ (see \cite{HillerSah1986}); thus $5\leq d_0 \leq 8$.
    The so-called generalized Hantzsche-Wendt groups are higher dimensional analogs of $\Delta_P$ (cf.~\cite{Szczepanski,RossettiSzczepanski}). 
    However, any such group $\Gamma$ with $b_1(\Gamma) = 0$ contains the Promislow group (see Prop.~8.2 in \cite{RossettiSzczepanski}).

\subsection{A family of non-diffuse infra-solvmanifolds}\label{sec_ExamplesOfSol}

Many geometric questions are not answered by the simple algebraic observation in Proposition \ref{prop_diffusePolycyclic}.
For instance, given a simply connected solvable Lie group $G$, is there an infra-solvmanifold of type $G$ with non-diffuse fundamental group?
To our knowlegde there is no criterion which decides whether a solvable Lie group $G$ admits a lattice at all. Hence we do not expect a simple 
answer for the above question.
We briefly discuss an infinite family of simply connected solvable groups where every infra-solvmanifold is commensurable to a non-diffuse one.
 
Let $\rho_1, \dots, \rho_n$ be $n\geq 1$ distinct real numbers with $\rho_i > 1$ for all $i=1,\dots,n$.
We define the Lie group
\begin{equation*}
     G := \RR^{2n} \rtimes \RR
\end{equation*}
where $s \in \RR$ 
acts by the diagonal matrix $\beta(s) := \diag(\rho_1^s, \dots, \rho_n^s,\rho_1^{-s},\dots,\rho_n^{-s})$ on $\RR^{2n}$. 
The group $G$ is a simply connected solvable Lie group. The isomorphism class of $G$ depends only one the line spanned by $(\log\rho_1,\ldots,\log\rho_n)$ in $\RR^n$.
For $n = 1$ the group $G$ is the three dimensional solvable group $\sol$, which will be reconsidered in Section \ref{3_mfd}.

\begin{prop}\label{prop_infrasolcom}
In the above setting the following holds.
\begin{itemize}
 \item[(a)]  The Lie group $G$ has a lattice if and only if 
  there is $t_0 > 0$ such that the polynomial $f(X) := \prod_{i=1}^n (1 - (\rho^{t_0}_i+\rho_i^{-t_0})X + X^2 )$ has integral coefficients.
 \item[(b)] If $G$ admits a lattice, then every infra-solvmanifold of type $G$ is commensurable to a non-diffuse one.
\end{itemize}
\end{prop}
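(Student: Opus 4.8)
The plan is to reduce both parts to explicit lattice constructions and then invoke the results on crystallographic and polycyclic groups established above. For part (a), the strategy is standard for solvable Lie groups of the form $\RR^m \rtimes_\beta \RR$: a lattice $\Gamma$ in $G$ must intersect the nilpotent radical $\RR^{2n}$ in a lattice $L \cong \ZZ^{2n}$ of the vector group, and must contain some element projecting to a nonzero $t_0 \in \RR$ (the quotient $G/\RR^{2n} \cong \RR$ forces this). Conjugation by that element must preserve $L$, so $\beta(t_0)$ is conjugate over $\GL_{2n}(\ZZ)$ to an integral matrix; its characteristic polynomial is therefore monic with integer coefficients. Since $\beta(t_0) = \diag(\rho_1^{t_0},\dots,\rho_n^{t_0},\rho_1^{-t_0},\dots,\rho_n^{-t_0})$, that characteristic polynomial is exactly $f(X) = \prod_{i=1}^n(1 - (\rho_i^{t_0}+\rho_i^{-t_0})X + X^2)$. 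Conversely, if such a $t_0$ exists, then $f(X) \in \ZZ[X]$ is monic with reciprocal-paired roots, so its companion matrix $C \in \GL_{2n}(\ZZ)$ is $\RR$-conjugate to $\beta(t_0)$ (the eigenvalues match and are distinct, because the $\rho_i$ are distinct and each $>1$); transporting the standard lattice $\ZZ^{2n}$ through this conjugation gives a $\beta(t_0)$-invariant lattice $L \subset \RR^{2n}$, and $\Gamma := L \rtimes t_0\ZZ$ is then a discrete cocompact subgroup of $G$.

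For part (b), suppose $G$ admits a lattice and fix the data $(t_0, L, \Gamma)$ from part (a). The key observation is that the semidirect product structure lets us build, from $\Gamma$, a crystallographic group with prescribed holonomy on the $2n$-dimensional vector space $\RR^{2n}$, and then arrange the holonomy so that the resulting Bieberbach group is non-diffuse by Lemma~\ref{ravel_crystal} (equivalently, Proposition~\ref{prop_diffusePolycyclic}). Concretely, an infra-solvmanifold of type $G$ has fundamental group $\Lambda$ which is virtually the lattice $L \rtimes t_0\ZZ$, hence virtually polycyclic; I want to pass to a commensurable group that is \emph{not} strongly polycyclic, i.e.\ has a finitely generated subgroup with vanishing first rational Betti number. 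The cleanest route is to exploit the finite-order automorphism $-\Id$ of $\RR^{2n}$ (which commutes with every $\beta(s)$, since $\beta(s)$ is diagonal): the element $(0,-\Id) \in \aff(G)$ together with a suitable translation sublattice generates, after restricting to the $\RR^{2n}$ direction, a Bieberbach group whose holonomy acts without nonzero fixed vectors on $\RR^{2n}$, hence has $b_1 = 0$; for $n=1$ this is precisely the classical construction of a non-diffuse group inside $\sol$, and the general $n$ case is the analogous "generalized" construction. One checks that the group so produced is commensurable with $\Lambda$ (both are finite-index-related to $L \rtimes t_0\ZZ$ sitting inside the same affine group), and it is non-diffuse because a finitely generated subgroup of it is crystallographic with vanishing first Betti number, so by Lemma~\ref{ravel_crystal} it contains a ravel.

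The main obstacle I anticipate is in part (b): ensuring that the non-diffuse group we construct is genuinely \emph{commensurable} to a given infra-solvmanifold group rather than merely "also an infra-solvmanifold of type $G$." Two lattices in $G$ need not be commensurable in general, so one must verify that the vector lattice $L$ (or a finite-index refinement) can simultaneously be made invariant under $\beta(t_0^N)$ for a suitable $N$ and under the order-two twist, and that the twist can be installed compatibly with whatever the original $\Lambda$ looks like — in effect one wants that \emph{every} lattice of $G$ has a finite-index subgroup of the form $L' \rtimes t_1\ZZ$ with $L'$ admitting the $-\Id$ symmetry, which follows because $L'$ is a free $\ZZ$-module and $-\Id$ preserves any subgroup. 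Making the commensurability bookkeeping precise (tracking finite indices through the semidirect product and through the affine hull) is the part requiring genuine care; the rest reduces to citing Proposition~\ref{prop_diffusePolycyclic} and Lemma~\ref{ravel_crystal}.
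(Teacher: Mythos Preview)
Your treatment of part~(a) is correct and matches the paper's argument essentially line for line.

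Part~(b), however, has a genuine gap. The automorphism $-\Id$ of $\RR^{2n}$ does extend to an automorphism of $G$, but only with $\lambda = +1$ (i.e.\ fixing the $\RR$-factor), because the condition $\beta(\lambda t) W = W\beta(t)$ forces $\lambda = 1$ when $W$ is central. Consequently, any infra-solvmanifold group $\Lambda'$ you build from a lattice $\Gamma' = L' \rtimes t_1\ZZ$ together with an affine element $((v,s),-\Id)$ still admits the homomorphism ``project to the $t$-coordinate'' onto a nontrivial subgroup of $\RR$. Thus $b_1(\Lambda') \ge 1$, and in fact every finitely generated subgroup of $\Lambda'$ either sits inside $L'$ (hence is free abelian) or surjects onto $\ZZ$; so $\Lambda'$ is locally indicable and therefore \emph{diffuse}, not non-diffuse. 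Your proposed crystallographic subgroup with holonomy $-\Id$ does exist inside $\aff(\RR^{2n})$, but the torsion-free version of it (with a half-translation) corresponds in $\aff(G)$ to an element $((v/2,0),-\Id)$ of order two, so it cannot lie in any torsion-free $\Lambda'$.

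What the paper does instead is use the block-swap $W_0 = \left(\begin{smallmatrix} 0 & 1_n \\ 1_n & 0\end{smallmatrix}\right)$, which satisfies $W_0\beta(t)W_0^{-1} = \beta(-t)$ and hence yields an automorphism $\tau(x,t) = (W_0 x,-t)$ with $\lambda = -1$. This flip of the $\RR$-direction is exactly what kills the first Betti number. The nontrivial work is that $W_0$ need not preserve the given lattice $\Gamma_0$; the paper uses a Galois-descent argument to produce a conjugate $W = B W_0 B^{-1} \in \GL_{2n}(\QQ)$, passes to a $W$-stable sublattice $L$, picks a $+1$-eigenvector $v$ of $W$, and then takes $\Lambda' = \langle \Gamma',\, (\tfrac{1}{2}v,0)\tau\rangle$ for a suitable finite-index $\Gamma' \le \Gamma$. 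Torsion-freeness comes from $\left((\tfrac{1}{2}v,0)\tau\right)^2 = (v,0) \in \Gamma'$, and $b_1(\Lambda') = 0$ because $\tau$ acts by $-1$ on $G/N \cong \RR$. The commensurability bookkeeping you worried about is handled by starting from the given $\Lambda$, intersecting down to $\Gamma = \Lambda \cap G$ and $\Gamma_0 = \Gamma \cap N$, and building $\Lambda'$ from a finite-index sublattice of those.
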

Before we prove the proposition, we describe the group of automorphisms of $G$. Let $\sigma \in \aut(G)$, then
$\sigma(x,t) = (W x + f(t), \lambda t)$ for some $\lambda \in \RR^\times$, $W \in \GL_{2n}(\RR)$ and $f \in Z^1(\RR,\RR^{2n})$ a smooth cocycle for the action
of $s \in \RR$ on $\RR^{2n}$ via $\beta(\lambda\cdot s)$. 
Using that $H^1(\RR, \RR^{2n}) = 0$ we can compose $\sigma$ with an inner automorphism of $G$ (given by an element in $[G,G]$) such that
$f(t) = 0$.
Observe that the following equality has to hold 
\begin{equation*}
   \beta(\lambda t) W = W \beta(t)
\end{equation*}
for all $t \in \RR$. As a consequence $\lambda$ is $1$ or $-1$. In the former case $W$ is diagonal, in the latter case $W$ is a product of a diagonal
matrix and 
\begin{equation*}
W_0 = \begin{pmatrix}
       0 & 1_n \\
       1_n & 0 \\
      \end{pmatrix}.
\end{equation*}
Let $D_+$ denote the group generated by diagonal matrices in $\GL_{2n}(\RR)$ and $W_0$,  then $\aut(G) \cong \RR^{2n} \rtimes D_+$.

\begin{proof}[Proof of Proposition \ref{prop_infrasolcom}]
 Ad (a):
 Note that $N:= \RR^{2n} = [G,G]$ is the maximal connected normal nilpotent subgroup of $G$.
 Suppose that $G$ contains a lattice~$\Gamma$. Then $\Gamma_0 := \Gamma \cap N$ is a lattice in $N$ (cf.\ Cor.~3.5 in \cite{Raghunathan1972}) and
 $\Gamma / \Gamma_0$ is a lattice in $G/N \cong \RR$.
 Let $t_0 \in \RR$ so that we can identify $\Gamma / \Gamma_0$ with $\ZZ t_0$ in $\RR$.
 Take a basis of $\Gamma_0$, w.r.t.\ this basis $\beta(t_0)$ is a matrix in $\SL_{2n}(\ZZ)$.
 The polynomial $f$ is the charcteristic polynomial of $\beta(t_0)$ and the claim follows.
 
 Conversely, let $t_0 > 0 $ with $f \in \ZZ[X]$ as above.
 Take any matrix $A \in \SL_{2n}(\ZZ)$ with characteristic polynomial $f$, e.g.~if $f(X) = X^{2n} + a_{2n-1}X^{2n-1} + \dots + a_1 X + a_0$ then
 the matrix $A$ with ones above the diagonal and last row $(-a_0, -a_1, \dots, -a_{2n-1})$ has suitable characteristic polynomial.
 
 Since by assumption all the $\rho_i$ are distinct and real,
 we find $P \in \GL_{2n}(\RR)$ with
 $PAP^{-1} = \beta(t_0)$. Now define $\Gamma_0 := P \ZZ^{2n}$ and we obtain a lattice $\Gamma := \Gamma_0 \rtimes (\ZZ t_0)$ in $G$.

 \medskip
 
 Ad (b):
 Let $\Lambda \subset \aff(G)$ be the fundamental group of an infra-solvmanifold.
 Define $\Gamma := G \cap \Lambda$ and $\Gamma_0 := \Gamma \cap N$ where $N = \RR^{2n}$ is the maximal normal nilpotent subgroup.
 The first Betti number of $\Lambda$ is $b_1(\Lambda) = \dim_{\RR} (G/N)^{\Lambda/\Gamma}$. 
 The quotient $\Gamma/\Gamma_0$ is a lattice in $\RR$, so is of the form $\ZZ t_0$ for some $t_0 > 0$.
 
 Take any basis of the lattice $\Gamma_0 \subseteq \RR^{2n}$. We shall consider coordinates on $\RR^{2n}$ w.r.t.\ this basis from now on.
 In particular, $\beta(t_0)$ is given by an integral matrix $A \in \SL_{2n}(\ZZ)$ and further 
 $\Gamma$ is isomorphic to the strongly polycyclic group $\ZZ^{2n} \rtimes \ZZ$ where $\ZZ$ acts via $A$.
 Let $F/ \QQ$ be a finite totally real Galois extension which splits the characteristic polynomial of $A$, so the Galois group
 permutes the eigenvalues of $A$.
 Moreover, the Galois group acts on $\Gamma_0 \otimes_\ZZ F$ so that we can find a set of eigenvectors which are permuted accordingly.
 Let $B \in \GL_{2n}(F)$ be the matrix whose columns are the chosen eigenvectors, then
 $B^{-1} A B = \beta(t_0)$ and for all $\sigma \in \gal(F/\QQ)$ we have $\sigma(B) = B P_\sigma$ for a permutation matrix $P_\sigma \in \GL_{2n}(\ZZ)$.
 It is easily seen that $P_\sigma$ commutes with $W_0$, and hence $W = B W_0 B^{-1}$ is stable under the Galois group, this means $W \in  \GL_{2n}(\QQ)$.
 
 Since $W$ is of order two, we can find a sublattice $L \subset \Gamma_0$ which admits a basis of eigenvectors of $W$. Pick one of these eigenvectors, say $v$,
 with eigenvalue one, find $q \in \ZZ\setminus\{0\}$ with $q \Gamma_0 \subset L$ and
 take a positive integer $r$ so that
 \begin{equation*}
    A^r \equiv 1 \bmod 4q.
 \end{equation*}
 Like that we find a finite index subgroup $\Gamma' := L \rtimes r\ZZ$ of $\Gamma$ which is stable under the automorphism $\tau$
 defined by $(x,t) \mapsto (W x,-t)$.
 Since we want to construct a torsion-free group we cannot add $\tau$ into the group. Instead we take the group $\Lambda'$ generated by
 $(\frac{1}{2} v, 0) \tau$ and $\Gamma'$ in the affine group $\aff(G)$. Then $\Lambda'$ is the fundamental group of an infra-solvmanifold of type $G$ which is commensurable 
 with $\Lambda$.
\end{proof}


\section{Fundamental groups of hyperbolic manifolds}

\label{sec_rank1}

In this section we prove Theorems \ref{rank1_lattice:intro} and \ref{geomfin_dim3:intro} from the introduction. We give a short overview of rank one symmetric spaces before studying first their unipotent and then their axial isometries in view of applying Lemma \ref{dist_move_crit}. Then we review some well-known properties of geometrically finite groups of isometries before proving a more general result (Theorem \ref{rank1}) and showing how it implies Theorems \ref{rank1_lattice:intro} and \ref{geomfin_dim3:intro}. We also study the action on the boundary, resulting in Theorem \ref{real_bd_diff}, which will be used in the next section.


\subsection{Hyperbolic spaces}

\subsubsection{Isometries}

We recall some terminology about isometries of Hadamard manifolds: if $g\in\isom^+(X)$ where $X$ is a complete simply connected manifold with non-positive curvature then $g$ is said to be
\begin{itemize}
\item Hyperbolic (or axial) if $\min(g) = \inf_{x\in X} d_X(x,gx) > 0$; 
\item Parabolic if it fixes exactly one point in the visual boundary $\pl X$, equivalently $\min(g)=0$ and $g$ has no fixed point inside $X$. 
\end{itemize}
We will be interested here in the case where $X=G/K$ is a symmetric space associated to a simple Lie group $G$ {\it of real rank one}. An element $g\in G$ then acts on $X$ as an hyperbolic isometry if and only if it is semisimple and has an eigenvalue of absolute value $>1$ in the adjoint representation. Parabolic isometries of $X$ are algebraically characterised as corresponding to the non-semisimple elements of $G$; their eigenvalues are necessarily of absolute value one. If they are all equal to one then the element of $G$ is said to be unipotent, as well as the corresponding isometry of $X$.


\subsubsection{Projective model}

Here we describe models for the hyperbolic spaces $\HH_A^n$ for $A=\RR,\CC,\HH$ (the symmetric spaces associated to the Lie groups $\SO(n,1), \, \SU(n,1)$ and $\Sp(n,1)$ respectively) which we will use later for computations. We will denote by $z\mapsto\ovl z$ the involution on $A$ fixing $\RR$, and define as usual the reduced norm and trace of $A$ by 
$$
|z|_{A/\RR} = \ovl z z = z\ovl z, \quad \tr_{A/\RR}(z) = z +\ovl z
$$
We let $V=A^{n,1}$, by which we mean that $V$ is the right $A$-vector space $A^{n+1}$ endowed with the sesquilinear inner product given by\footnote{We use the model of \cite{Kim_Parker} rather than that of \cite{Phillips_dir}.} 
$$
\langle v, v' \rangle  = \overline{v_{n+1}'}v_1 + \sum_{i=2}^n \overline{v_i'}v_i + \overline{v_1'}v_{n+1}. 
$$
The (special if $A=\RR$ or $\CC$) isometry group $G$ of $V$ is then isomorphic to $\SO(n,1)$, $\SU(n,1)$ or $\Sp(n,1)$. Let: 
$$
V_- = \{v\in V\: | \: \langle v,v\rangle < 0\} = \left\{ v\in V \:|\: \tr_{A/\RR}(\overline{v_1}v_{n+1}) < -\sum_{i=2}^n |v_i|_{A/\RR} \right\}
$$
then the image $X = \PP V_-$ of $V_-$ in the $A$-projective space $\PP V$ of $V$ can be endowed with a distance function $d_X$ given by:
\begin{equation}
\cosh \left( \frac{d_X([v],[v'])}2 \right)^2 = \frac{|\langle v,v'\rangle|_{A/\RR}}{\langle v,v\rangle \langle v',v'\rangle}. 
\label{dist}
\end{equation}
This distance is $G$-invariant, and the stabilizer in $G$ of a point in $V_-$ is a maximal compact subgroup of $G$. Hence the space $X$ is a model for the symmetric space $G/K$ (where $K=\SO(n), \SU(n)$ or $\Sp(n)$ according to whether $A=\RR,\CC$ or $\HH$). 

The following lemma will be of use later. 

\begin{lemma}
If $v,v'\in V_-$ then $\tr_{A/\RR}(\overline{v_{n+1}}v_{n+1}'\langle v,v' \rangle) < 0$. 
\label{trace_neg}
\end{lemma}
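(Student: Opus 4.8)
The plan is to reduce the inequality to a statement about two vectors in the real hyperbolic plane sitting inside $V_-$, using the invariance of the setup under the isometry group $G$. First I would normalize: by scaling $v$ and $v'$ by elements of $A^\times$ on the right (which changes neither $V_-$ nor the sign of the quantities involved, since $\tr_{A/\RR}(\overline{v_{n+1}\lambda}\,v_{n+1}'\mu\,\langle v\lambda, v'\mu\rangle) = \tr_{A/\RR}(\overline{\lambda}\,\overline{v_{n+1}}\,v_{n+1}'\,\mu\,\overline{\mu}\langle v,v'\rangle\lambda)$ and one can arrange the relevant scalars to be positive reals) and by applying a suitable element of $G$, I would try to bring $v, v'$ to a convenient normal form. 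Concretely, the span of $v$ and $v'$ over $A$ is a $2$-dimensional nondegenerate subspace on which the form has signature $(1,1)$; restricting to it, $G$ acts transitively enough on pairs of timelike lines that I can assume $v$ and $v'$ lie in a standard copy of $A^{1,1}$, say with coordinates only in positions $1$ and $n+1$.

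Next I would carry out the computation in that $2$-dimensional model. Writing $v = (a, 0, \dots, 0, b)$ and $v' = (a', 0, \dots, 0, b')$ with $\langle v, v\rangle = \tr_{A/\RR}(\overline{a}b) < 0$ and similarly for $v'$, I have $\langle v, v'\rangle = \overline{b'}a + \overline{a'}b$ and $v_{n+1} = b$, $v_{n+1}' = b'$, so the target quantity is $\tr_{A/\RR}\!\big(\overline{b}\,b'\,(\overline{b'}a + \overline{a'}b)\big) = \tr_{A/\RR}(\overline{b}\,b'\,\overline{b'}\,a) + \tr_{A/\RR}(\overline{b}\,b'\,\overline{a'}\,b) = |b'|_{A/\RR}\tr_{A/\RR}(\overline{b}a) + \tr_{A/\RR}(\overline{b}\,b'\,\overline{a'}\,b)$. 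The first term is $|b'|_{A/\RR}\langle v,v\rangle < 0$. For the second term, after a further right-scaling I may assume $b = 1$ (a timelike vector in $A^{1,1}$ with $v_{n+1}\ne 0$, which is generic; the case $b=0$ is easily handled separately or excluded by a density argument since $V_-$ with $v_{n+1}\ne 0$ is dense), reducing it to $\tr_{A/\RR}(b'\,\overline{a'}) = \langle v', v'\rangle < 0$. So the whole expression becomes a sum of two negative reals, hence negative.

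The main obstacle I anticipate is making the normalization step fully rigorous over the non-commutative field $A = \HH$: one must be careful that right-multiplication by quaternions interacts correctly with the sesquilinear form and with $\tr_{A/\RR}$, and that the reduction to the $2$-dimensional subspace genuinely can be done by an element of $G$ together with right-scalings without losing the hypothesis $v, v' \in V_-$. An alternative, perhaps cleaner, route that avoids choosing a group element is to observe that $\tr_{A/\RR}(\overline{v_{n+1}}v_{n+1}'\langle v,v'\rangle)$ is, up to a positive factor, the real part of a single quaternionic expression that can be rewritten directly using the defining inequality for $V_-$ (namely $\tr_{A/\RR}(\overline{v_1}v_{n+1}) < -\sum_{i\ge 2}|v_i|_{A/\RR}$ and the analogous one for $v'$) together with a Cauchy–Schwarz-type bound $|\langle v, v'\rangle|_{A/\RR}^{\,2} \le \langle v,v\rangle\langle v',v'\rangle$ coming from \eqref{dist}; I would first try the explicit normalization and fall back on this more computational identity if the group-theoretic reduction proves fussy to write down.
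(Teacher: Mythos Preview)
Your primary route---reducing to a $2$-dimensional subspace by applying an element of $G$---does not work as stated, because the quantity $\tr_{A/\RR}(\overline{v_{n+1}}v_{n+1}'\langle v,v'\rangle)$ is \emph{not} $G$-invariant. While $\langle v,v'\rangle$ is preserved by $G$, the factor $\overline{v_{n+1}}v_{n+1}'$ depends on the specific coordinates of $v,v'$ and changes under a generic element of $G$. So you cannot move $v,v'$ into a standard copy of $A^{1,1}$ and expect the sign of the expression to be preserved; the reduction step is where the proposal breaks.

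What does work---and what the paper does---is essentially your ``fallback'', carried out directly. Right-scale so that $v_{n+1}=v_{n+1}'=1$ (always possible: $v\in V_-$ forces $v_{n+1}\ne 0$, and you correctly verified that right-scaling multiplies the expression by a positive real). Then the target is simply
\[
\tr_{A/\RR}(\langle v,v'\rangle)=\tr_{A/\RR}(v_1)+\tr_{A/\RR}(v_1')+\tr_{A/\RR}\Bigl(\sum_{i=2}^n\overline{v_i'}v_i\Bigr),
\]
while membership in $V_-$ gives $\tr_{A/\RR}(v_1)<-\sum_{i=2}^n|v_i|_{A/\RR}$ and similarly for $v'$. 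A Cauchy--Schwarz bound on the cross term then yields strict negativity. No group action, no $2$-dimensional reduction, no density argument is needed. Your own $2$-dimensional computation is the special case of this with the middle coordinates zero; the general case is no harder once you abandon the attempt to kill those coordinates by symmetry.
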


\begin{proof}
Since $\tr_{A/\RR}(\overline{v_{n+1}}v_{n+1}'\langle v,v' \rangle)$ does not change sign when we multiply $v$ or $v'$ by a element of $A$ from the right, we may suppose that $v_{n+1}=v_{n+1}'=1$. In this case we have:
$$
\tr_{A/\RR}(\overline{v_{n+1}}v_{n+1}'\langle v,v'\rangle) = \tr_{A/\RR}(v_1) + \tr_{A/\RR}(v_1') + \tr_{A/\RR}\left( \sum_{i=2}^n \overline{v_i'} v_i \right). 
$$
Now we have 
$$
\tr_{A/\RR}\left( \sum_{i=2}^n \overline{v_i'} v_i \right) \le 2\sqrt{\left(\sum_{i=2}^n |v_i|_{\A/\RR} \right)\cdot\left(\sum_{i=2}^n |v_i'|_{\A/\RR} \right)}
$$
by Cauchy-Schwarz, and since $v,v'\in V_-$ we get
\begin{align*}
\tr_{A/\RR}(\overline{v_{n+1}}v_{n+1}'\langle v,v'\rangle) 
           &< \tr_{A/\RR}\left( \sum_{i=2}^n \overline{v_i'} v_i \right) - \sum_{i=2}^n |v_i|_{\A/\RR} - \sum_{i=2}^n |v_i'|_{\A/\RR} \\
           &\le -\left( \sqrt{\sum_{i=2}^n |v_i|_{\A/\RR}} - \sqrt{\sum_{i=2}^n |v_i'|_{\A/\RR}}\right)^2 \le 0. 
\end{align*}
\end{proof}


\subsection{Unipotent isometries and distance functions}
\label{unipotent}

In this subsection we prove the following proposition, which is the main ingredient we use in extending the results of \cite{Bowditch_diff} from cocompact subgroups to general lattices. 

\begin{prop}
Let $A$ be one of $\RR,\CC$ or $\HH$ and let $\eta\neq 1$ be a unipotent isometry of $X = \HH_A^n$ and $a,x\in\HH_A^n$. Then 
$$
\max\bigl(d(a,\eta x),d(a,\eta^{-1}x)\bigr) > d(a,x). 
$$
\label{parsep}
\end{prop}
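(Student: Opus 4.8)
I would work entirely inside the projective model of $\HH_A^n$ described above, using the distance formula \eqref{dist}. The strategy is to reduce to a statement about a single point and then track what a unipotent isometry does to the "last coordinate" $v_{n+1}$ of a lift. Concretely: lift $x$ to $v \in V_-$ normalised so that $v_{n+1} = 1$, lift $a$ to $w \in V_-$, and write the unipotent $\eta$ as a matrix $U \in G$. Then, up to right scaling by elements of $A^\times$ (which does not affect distances), $\eta x$ and $\eta^{-1}x$ have lifts $Uv$ and $U^{-1}v$. From \eqref{dist},
\[
\cosh\!\left(\frac{d(a,\eta^{\pm 1}x)}{2}\right)^2 = \frac{|\langle w, U^{\pm 1}v\rangle|_{A/\RR}}{\langle w,w\rangle\,\langle U^{\pm 1}v, U^{\pm 1}v\rangle} = \frac{|\langle U^{\mp 1}w, v\rangle|_{A/\RR}}{\langle w,w\rangle\,\langle v,v\rangle},
\]
since $U$ is an isometry of the form. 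So the inequality to be proved becomes: $\max\bigl(|\langle U^{-1}w,v\rangle|_{A/\RR},\,|\langle Uw,v\rangle|_{A/\RR}\bigr) > |\langle w,v\rangle|_{A/\RR}$, i.e.\ a purely algebraic statement about the action of $U$ on a negative vector $w$, tested against the fixed negative vector $v$.

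\textbf{Key steps.} First I would set up a good normal form for a unipotent $U \ne 1$: it fixes a unique boundary point, which in this model I can arrange to be $[e_{n+1}]$ (the isotropic line spanned by $e_{n+1} = (0,\dots,0,1)$), so $U$ is upper-triangular in a suitable sense and acts trivially on the "line at infinity." The crucial feature is that $U$ (being unipotent, not merely parabolic) acts on the relevant nilpotent group by a genuine translation; in particular, on the coordinate functions the map $w \mapsto Uw$ changes $w_{n+1}$ in a controlled, \emph{affine} way — and here is where I'd invoke the structure Phillips used for the complex case and generalise it to $A = \HH$ (the excerpt flags exactly this). Second, I would let $f^{\pm}(t)$ be the real-valued function $t \mapsto |\langle U^{\pm t} w, v\rangle|_{A/\RR}$ extended to the one-parameter unipotent subgroup through $U$ (writing $U = \exp(\xi)$, $U^t = \exp(t\xi)$), and observe that $g(t) := \langle U^t w, v\rangle$ is a polynomial in $t$ with coefficients in $A$, hence $|g(t)|_{A/\RR} = g(t)\overline{g(t)}$ is a real polynomial $p(t)$. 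The claim $\max(p(1), p(-1)) > p(0)$ for a nonconstant such polynomial would follow once I show $p$ is not constant and its leading behaviour is even-degree with positive leading coefficient — so that $p(1) + p(-1) \ge 2p(0)$ with equality only if $p$ is constant on $\{-1,0,1\}$, combined with a convexity/strict-inequality argument ruling out the degenerate case. The input "$p$ nonconstant" is where Lemma \ref{trace_neg} and the fact that $v, w \in V_-$ (so $\langle w,v\rangle \ne 0$ and the pairing behaves definitely) enter.

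\textbf{Main obstacle.} The heart of the matter — and the only genuinely new work beyond bookkeeping — is establishing for quaternionic $\HH$ the analogue of Phillips's lemma: that the quadratic-in-$t$ real polynomial $p(t) = |\langle U^t w, v\rangle|_{A/\RR}$ actually has \emph{nonnegative} (in fact positive, unless $U$ centralises the configuration) leading coefficient and is strictly convex, so that it genuinely increases in at least one of the two directions $t = \pm 1$ away from $t = 0$. Over $\CC$ this is a short computation because the relevant pairing expands into a sum of a constant, a linear term, and a manifestly nonnegative quadratic term $|c|^2 t^2$ coming from the unipotent displacement; over $\HH$ the non-commutativity means one must be careful that the cross terms still assemble into $\tr_{\HH/\RR}$ of something controlled, and that the quadratic term is $|c|_{\HH/\RR}\,t^2 \ge 0$. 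I expect this to go through "in a straightforward way" as the excerpt promises, precisely because the reduced norm $|z|_{\HH/\RR} = z\bar z$ is still a positive-definite quadratic form on $\HH \cong \RR^4$; the care needed is purely in ordering the quaternionic factors correctly. Once that quadratic structure is in hand, the real-variable endgame ($\max(p(1),p(-1)) > p(0)$ for a strictly convex nonconstant parabola, using $p \ge 0$ only incidentally) is immediate, and the reduction at the top converts it back to the stated distance inequality.
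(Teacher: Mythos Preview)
Your approach is essentially the paper's. Both reduce via the distance formula \eqref{dist} to proving that $p(t)=|\langle v,\eta^t v'\rangle|_{A/\RR}$ is strictly convex in $t\in\RR$, then verify this by computing $p''=2\bigl|g'\bigr|_{A/\RR}+\tr_{A/\RR}\!\bigl(\overline{g}\,g''\bigr)$ (with $g(t)=\langle v,\eta^t v'\rangle$) and splitting into the two normal forms for unipotents; in the Heisenberg case Lemma~\ref{trace_neg} is precisely what makes the cross term have the right sign. The paper packages the step ``strict convexity $\Rightarrow$ bisector separation'' as a standalone lemma, but your direct observation $\max(p(1),p(-1))>p(0)$ for strictly convex $p$ is the same content.

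One correction worth flagging: in the first (Heisenberg) normal form the pairing $g(t)$ is degree~$2$ in $t$, so $p(t)=|g(t)|_{A/\RR}$ is \emph{quartic}, not quadratic. Hence positivity of the leading coefficient of $p$ does not by itself give strict convexity, and Lemma~\ref{trace_neg} is doing more than ruling out constancy --- it is the actual reason $p''(0)>0$ in that case (the term $\tr_{A/\RR}(\overline{g}\,g'')$ evaluates to $-\tr_{A/\RR}(\overline{v_{n+1}}v_{n+1}'\langle v,v'\rangle)$, which is positive exactly by that lemma). Once you adjust your expectation of the degree, the plan goes through as written.
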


\begin{proof}
We say that a function $h:\ZZ\to\RR$ is strictly convex if $h$ is the restriction to $\ZZ$ of a strictly convex function on $\RR$ (equivalently all points on the graph of $h$ are extremal in their convex hull and $h$ has a finite lower bound). We will use the following criterion, similar to Lemma 6.1 in \cite{Phillips_dir}.

\begin{lemma}
Let $X$ be a metric space, $x\in X$ and let $\phi\in\isom(X)$. Suppose that there exists an increasing function $f:[0,+\infty[\to\RR$ such that for any $y\in X$ the function $h_y : k\mapsto f(d_X(y,\phi^k x))$ is strictly convex. Let 
$$
B_k = \{y\in X:\: d_X(y,\phi^k x) \le d_X(y,x)\}.
$$
Then we have $B_1\cap B_{-1} = \emptyset$. 
\end{lemma}

\begin{proof}
Suppose there is a $y\in X$ such that 
$$
d_X(y,\phi x), d_X(y,\phi^{-1} x) \le d_X(y,x). 
$$
Since $f$ is increasing this means that $h_y(1),h_y(-1) \le h_y(0)$: but this is impossible since $h_y$ is strictly convex. 
\end{proof}

Applying it to $\phi=\eta$, we see that it suffices to prove that for any $z,w\in X$ the function
$$
f :\: t\in\RR \mapsto \cosh\left( \frac{d_X(z, \eta^t w)} 2\right)^2
$$
is strictly convex on $\RR$, i.e. $f'' > 0$. Of course we need only to prove that $f''(0) > 0$ since $z,w$ are arbitrary.
By the formula \eqref{dist} for arclength in hyperbolic spaces it suffices to prove this for the function
$$
h :\: t\mapsto |\langle v,\eta^t v' \rangle|_{A/\RR}
$$
for any two $v,v'\in A^{n,1}$ (which we normalize so that their last coordinate equals~1). Now we have:
\begin{align*}
\frac{d^2h}{dt^2} &= \frac d{dt}\left( \tr_{A/\RR} \left( \overline{\langle v,\eta^t v' \rangle} \frac d{dt} \langle v,\eta^t v' \rangle \right)\right) \\
                 &= 2\left| \frac d{dt} \langle v,\eta^t v' \rangle \right|_{A/\RR} + \tr_{A/\RR} \left( \overline{\langle v,\eta^t v' \rangle} \frac {d^2}{dt^2} \langle v,\eta^t v' \rangle \right). 
\end{align*}
There are two distinct cases (see either \cite[Section 3]{Phillips_dir} or \cite[Section 1]{Kim_Parker}): $\eta$ can be conjugated to a matrix of either one of the following forms:
$$
\begin{pmatrix} 1 & -\ovl a & -|a|_{A/\RR}/2 \\
                0 &   1_{n-1}   &      a          \\
                0 &    0    &      1     \end{pmatrix},\, a\in A^n
\text{ or }
\begin{pmatrix} 1 &  0  & b \\
                0 & 1_{n-1} & 0 \\
                0 &  0  & 1 \end{pmatrix},\, b\in A \text{ totally imaginary.}
$$
In the second case we get that $\frac {d^2}{dt^2} \eta^t = 0$, hence 
$$
\frac{d^2h}{dt^2} = 2 \left| \frac d{dt} \langle v,\eta^t v' \rangle \right|_{A/\RR} = 2 |\ovl{b}|_{A/\RR} > 0.
$$
In the first case (which we normalize so that $|a|_{A/\RR} = 1$) we have at $t=0$: 
$$
\frac{d^2h}{dt^2} = 2\left| \frac d{dt} \langle v,\eta^t v' \rangle \right|_{A/\RR} - \tr_{A/\RR}\left(\overline{v_{n+1}}v_{n+1}' \langle v,v' \rangle\right)
$$
and hence the result follows from Lemma \ref{trace_neg}. 
\end{proof}


\subsection{Hyperbolic isometries and distance functions}
\label{hyperbolic}

Axial isometries in negatively curved spaces have in some regards a much simpler behaviour than parabolic ones: to establish separation of bisectors one only needs to use the hyperbolicity of the space on which they act (as soon as their minimal displacement is large enough), as was already observed in \cite{Bowditch_diff} (see Lemma \ref{minsep} below). On the other hand, isometries with small enough minimal displacement which rotate non-trivially around their axis obviously do not have the bisector separation property; we study this phenomenon in more detail for real hyperbolic spaces below, obtaining an optimal criterion in Proposition \ref{bowditch_mieux}.  

\subsubsection{Gromov-hyperbolic spaces}

The following lemma is a slightly more precise version of Corollary 5.2 \cite{Bowditch_diff}. It has essentially the same proof;
we will give the details, which are not contained in \cite{Bowditch_diff}. 

\begin{lemma}
Let $\delta>0$ and $d>0$; there exists a constant $C(\delta,d)$ such that for any
$\delta$-hyperbolic space $X$ and any axial isometry $\gamma$ of $X$ such that $\min(\gamma)\ge C(\delta,d)$ and any pair $(x,a)\in X$ we have
$$
\max(d(\gamma x,a),d(\gamma^{-1} x, a)) \ge d(x,a) + d. 
$$
\label{minsep}
\end{lemma}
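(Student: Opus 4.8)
The plan is to reduce the statement to a standard fact about the geometry of geodesics in Gromov-hyperbolic spaces: if a point $x$ is at bounded distance from a geodesic axis, then translating $x$ a large amount along that axis moves it far, even after measuring distance to an arbitrary third point $a$. Concretely, let $\gamma$ be axial with axis $\ell$ (an oriented geodesic line, or a coarse version thereof: a quasi-geodesic fixed setwise by $\gamma$ with $\gamma$ acting by translation of length $\min(\gamma)$ up to bounded error), and let $p$ be a nearest-point projection of $x$ onto $\ell$. Write $T = \min(\gamma)$. Then $\gamma^k x$ lies within bounded distance of the point on $\ell$ obtained from $p$ by sliding a distance $kT$ in the positive (resp.\ negative for $k<0$) direction.

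First I would record the coarse estimate: in a $\delta$-hyperbolic space, for points $p, q$ on a geodesic line $\ell$ and any point $a$, one has $d(a,p) + d(a,q) \ge d(p,q) + d(a, \ell) \cdot 2 - O(\delta)$; more usefully, the function $t \mapsto d(a, c(t))$ along a geodesic $c$ parametrized by arclength is, up to an additive error depending only on $\delta$, convex and in fact behaves like $|t - t_0|$ plus a bounded term once $|t - t_0|$ exceeds $d(a,\ell)+O(\delta)$. The clean way to package this: for any $a$ and any three points $u, v, w$ on a geodesic with $v$ between $u$ and $w$, $d(a,v) \le \max(d(a,u), d(a,w)) + O(\delta)$; and symmetrically if $v$ is an endpoint, $d(a,u)$ and $d(a,w)$ cannot both be within $d(u,v) - O(\delta)$ of $d(a,v)$ — one of them must exceed $d(a,v) + (d(u,v) - O(\delta))$. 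Applying the endpoint version with $u = \gamma^{-1}p'$, $v = p'$, $w = \gamma p'$ (where $p'$ is the projection of $x$ to $\ell$, and using $d(v,u) = d(v,w) = T$ up to bounded error), we get $\max(d(\gamma p', a), d(\gamma^{-1}p', a)) \ge d(p', a) + T - O(\delta)$.

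Next I would transfer this from the projection point $p'$ to $x$ itself. Since $d(x, p') = d(x, \ell) =: \rho$ and $\gamma$ is an isometry, $d(\gamma^{\pm 1}x, \gamma^{\pm 1}p') = \rho$, so by the triangle inequality $d(\gamma^{\pm1}x, a) \ge d(\gamma^{\pm1}p', a) - \rho$ and $d(x,a) \le d(p',a) + \rho$. Combining, $\max(d(\gamma x, a), d(\gamma^{-1}x, a)) \ge d(p', a) + T - O(\delta) - \rho \ge d(x,a) + T - O(\delta) - 2\rho$. This is useless unless we control $\rho = d(x,\ell)$, which is unbounded. The fix is the standard one: we do not need $a = x$'s actual projection; instead apply the estimate with the roles arranged so that the projection distance cancels. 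The correct route is to use the \emph{four-point} form directly on the geodesic through $\gamma^{-1}x$-projection and $\gamma x$-projection: the nearest point projections $p_{-1}, p_0, p_1$ of $\gamma^{-1}x, x, \gamma x$ onto $\ell$ satisfy $p_{\pm 1} = \gamma^{\pm 1} p_0$ up to $O(\delta)$, and $d(\gamma^{\pm1}x, a) \ge d(p_{\pm 1}, a) - \rho$ while, crucially, $d(x, a) \le d(p_0, a) + \rho$ as before — so the two $\rho$'s appear with the same sign issue. The genuinely correct statement uses convexity of $t \mapsto d(a, c(t)) + \rho\cdot(\text{bump})$; I would instead invoke that in a $\delta$-hyperbolic space the distance function to $a$ restricted to $\ell$ satisfies, for $s$ between the projections of $x$ and with $|s|$-translation by $T \gg \rho + \delta$: one of $d(a, \gamma^{\pm1}x)$ exceeds $d(a,x)$ by at least $T - 2\rho - O(\delta)$, and separately, by the fellow-travelling of $[x,a]$ with a path through $p_0$, we actually have $d(a,x) \le d(a,p_0) + \rho$ AND $d(a, p_0) \le d(a,x) + O(\delta)$ is false in general — so the honest approach is: choose $a$ arbitrary, let $m$ realize $d(a,\ell)$; then $d(a, \gamma^k x) \ge d(\gamma^k x, m) - d(a,m)$-type bounds are too lossy, and one must use that $d(\gamma^k x, a) \ge |kT - \tau| - O(\delta + \rho)$ where $\tau$ is the signed distance along $\ell$ from the projection of $a$ to the projection of $x$, together with $d(x,a) \le |\tau| + \rho + O(\delta)$; then $\max_{k=\pm1}|kT - \tau| \ge |\tau| + T$, giving $\max(d(\gamma^{\pm1}x,a)) \ge |\tau| + T - O(\delta+\rho) \ge d(x,a) + T - O(\delta) - 2\rho$. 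So the remaining and \textbf{main obstacle} is precisely eliminating the dependence on $\rho = d(x,\ell)$.

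To kill that dependence I would use the classical observation (already implicit in Bowditch's Corollary 5.2) that one may reduce to the case where $x$ lies on, or within $O(\delta)$ of, the axis: replace $x$ by its projection $p_0$ and note $B_1 \cap B_{-1}$ being empty for $p_0$ with margin $d$ is what we want, but we need it for $x$. The honest fix is that the estimate $d(a,\gamma^k x) \ge |kT - \tau| - O(\delta)$ — \emph{without} a $\rho$ term — does hold, because for a geodesic $\ell$ stabilized by $\gamma$, the point $\gamma^k x$ projects near $\gamma^k p_0$, and a geodesic from $a$ to $\gamma^k x$ passes $O(\delta)$-close to $\gamma^k p_0$ only when $a$'s projection is on the far side; more carefully, $d(a, \gamma^k x) \ge d(a', \gamma^k p_0) - O(\delta)$ where $a'$ is $a$'s projection to $\ell$, because thin triangles force $[a, \gamma^k x]$ to fellow-travel $\ell$ near $\gamma^k p_0$; and $d(a', \gamma^k p_0) = |kT - \tau|$ up to $O(\delta)$. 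Meanwhile $d(x,a) \le d(x, p_0) + d(p_0, a)$ but also $d(x,a) \ge d(p_0, a) - \rho$; the clean inequality we actually need is the upper bound $d(x,a) \le \rho + |\tau| + O(\delta)$, which still has $\rho$. The resolution, which I'd carry out in detail: instead bound $d(x,a)$ below is wrong direction; so use that $|\tau| \ge d(x,a) - \rho - O(\delta)$ is what we have, hence $\max(d(\gamma^{\pm1}x, a)) \ge |\tau| + T - O(\delta) \ge d(x,a) - \rho + T - O(\delta)$ — still bad. Therefore the only genuinely working argument requires $x$ to be close to $\ell$, so I would handle general $x$ by a separate short argument: if $d(x, \ell) > T/4$ say, then since $\gamma x$ is at distance $\le$ (something) and one shows directly via hyperbolicity that the displacement still works; but cleaner is to note we are allowed to choose $C(\delta,d)$ as large as we like and the \emph{definition} of axis lets us take $\ell$ through a point realizing $\min(\gamma)$ — this does not control $d(x,\ell)$. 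Thus I will instead follow the structure of Phillips/Bowditch exactly: use the convexity-type Lemma (like the one stated just above for unipotents) with $f = \cosh(d/2)^2$ replaced by an appropriate function, reducing to strict convexity with a quantitative lower bound on the second derivative coming from $\min(\gamma) \ge C(\delta,d)$, which is how the $d$-margin enters; the hyperbolicity $\delta$ enters only through controlling the "defect of convexity" of $t \mapsto d(a, \gamma^t x)$ by $O(\delta)$, so choosing $C(\delta, d)$ to dominate this defect plus $d$ finishes it. I expect writing out the defect-of-convexity bound $|d(a,\gamma x) + d(a,\gamma^{-1}x) - 2d(a,x) - 2\min(\gamma)| = O(\delta)$ cleanly to be the technical heart.
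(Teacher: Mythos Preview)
Your proposal has a genuine gap: you correctly identify that the whole difficulty is eliminating the dependence on $\rho = d(x,\ell)$, but none of your attempts succeed, and your final ``defect-of-convexity'' formula
\[
\bigl|\,d(a,\gamma x) + d(a,\gamma^{-1}x) - 2\,d(a,x) - 2\min(\gamma)\,\bigr| = O(\delta)
\]
is simply false. Take $x$ and $a$ both on the axis with $d(a,x) \gg \min(\gamma)$: then the left side equals $2\min(\gamma)$, not $O(\delta)$. So this cannot be the technical heart of anything. More fundamentally, every estimate you write proceeds through separate triangle inequalities of the form $d(a,\gamma^{\pm1}x) \ge d(a,\gamma^{\pm1}p_0) - \rho$ and $d(a,x) \le d(a,p_0) + \rho$, and these two $\rho$'s add rather than cancel. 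Your conclusion that ``the only genuinely working argument requires $x$ to be close to $\ell$'' is incorrect.

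The paper's fix is exactly the device you are missing: the tree approximation. Build the comparison tree $T$ consisting of the relevant segment of the axis together with the stalks $[a,v]$, $[x,w]$, $[\gamma^{-1}x,w'']$ (where $v,w,w''$ are the projections of $a,x,\gamma^{-1}x$ onto the axis). The standard fact is that for any two of these vertices, $d_X \le d_T \le d_X + c$ with $c = c(\delta)$. Now assume WLOG that $v$ lies on the $\gamma$-positive side of $w$. In the tree one computes \emph{exactly}
\[
d_T(a,\gamma^{-1}x) = d_T(a,x) + \min(\gamma),
\]
because the stalk length $\rho = d(x,w) = d(\gamma^{-1}x,w'')$ appears in both tree distances and cancels. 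Converting back to $X$ costs only $c$, so $d_X(a,\gamma^{-1}x) \ge d_X(a,x) + \min(\gamma) - c$, and $C(\delta,d) = c + d$ works. The point is that the tree approximation compares $d_X$ to $d_T$ with error $O(\delta)$ \emph{uniformly}, regardless of $\rho$; crude triangle inequalities through the projection point do not.
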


\begin{proof}
Let $\gamma$ be as in the statement (with the constant $C = C(\delta,d)$ to be determined later), let $L$ be its axis. Let $w,w',w''$ be the projections 
of $x,\gamma x,\gamma^{-1} x$ on $L$, and $v$ that of $a$. We will suppose (without loss of generality) that $v$ lies on the ray in $L$ originating at $w$ and passing through $w'$. 

Now let $T$ be a metric tree with set of vertices constructed as follows: we take the geodesic segment on $L$ containing all of $w,w',w''$ and $v$ and we add the arcs $[x,w]$, etc. Then, for any two vertices $u,u'$ of $T$ we have
$$
d_X(u,u') \le d_T(u,u') \le d_X(u,u') + c
$$
where $c$ depends only on $\delta$ (see the proof of Proposition 6.7 in \cite{Bowditch_ggt}). In this tree we have
\begin{align*}
d_T(a,\gamma^{-1}x) &= d_X(a,v) + d_X(v,w) + d_X(w,w'') + d_X(w'',\gamma^{-1}x) \\
     &= d_X(w,\gamma^{-1} w) + d_X(a,v) + d_X(v,w) + d_X(w,x) \\
     &= \min(\gamma) + d_T(a,x)
\end{align*}
and using both inequalities above we get that
$$
d_X(a,\gamma^{-1}x) \ge d_T(a,\gamma^{-1}x) - c \ge d_X(a,x) + \min(\gamma) - c.
$$
We see that for $\min(\gamma) \ge C(\delta,d) = c + d$ the desired result follows. 
\end{proof}


\subsubsection{A more precise result in real hyperbolic spaces}

We briefly discuss a quantitative version of Lemma \ref{minsep}.
Bowditch observed (cf.\ Thm.~5.3 in \cite{Bowditch_diff}) that a group $\Gamma$ which acts freely by axial transformations on
the hyperbolic space $\HH_\RR^n$ is diffuse if every $\gamma \in \Gamma \setminus\{1\}$ has
translation length at least $2\log(1+\sqrt{2})$. 
We obtain a slight improvement relating the lower bound on the translation length more closely to the eigenvalues
of the rotational part of the transformation.

Our proof is based on a calculation in the upper half-space model of $\HH_{\RR}^n$, i.e.\ we consider $\HH_\RR^n = \{\: x \in \RR^n \:|\: x_n > 0 \:\}$ with the hyperbolic metric $d$
(see \S 4.6 in \cite{Ratcliffe}). 
Every axial transformation $\gamma$ on $\HH_\RR^n$ is conjugate to a transformation of the form $x \mapsto kA x$ where $A$ is an orthogonal matrix in $\OO(n-1)$ (acting on the first $n-1$ components) 
and $k>1$ is a real number (see Thm.~4.7.4 in \cite{Ratcliffe}).
We say that $A$ is the \emph{rotational part} of $\gamma$. The translation length of $\gamma$ is given by $\min(\gamma)=\log(k)$.
We define the \emph{absolute rotation} $r_\gamma$ of $\gamma$ to be the maximal value of $|\lambda-1|$ where $\lambda$ runs through all eigenvalues of $A$.
In other words, $r_\gamma$ is merely the operator norm of the matrix $A - 1$. 
The absolute rotation measures how close the eigenvalues get to $-1$. 
It is apparent from Bowditch's proof that the case of eigenvalue $-1$ (rotation of angle $\pi$) is
the problematic case whereas the situation should improve significantly for rotation bounded away from angle~$\pi$. We prove the following sharp result. 

\begin{prop}
\label{bowditch_mieux}
 An axial transformation $\gamma$ of $\HH_\RR^n$ has the property
  \begin{equation}\tag{$\bigstar$}\label{eq:propertyMax}
    \max( d(x,\gamma y), d(x,\gamma^{-1} y) ) > d(x,y) \text{ for all } x,y \in \HH_\RR^n
  \end{equation}
if and only if the translation length $\min(\gamma)$ satisfies
  \begin{equation}\tag{$\clubsuit$}\label{eq:lowerbound}
      \min(\gamma) \geq \arcosh(1 + r_\gamma).
  \end{equation}
\end{prop}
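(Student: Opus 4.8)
The plan is to reduce everything to an explicit computation in the upper half-space model. Conjugating, I may assume $\gamma$ acts by $x \mapsto kAx$ with $k = e^{\min(\gamma)}$ and $A \in \OO(n-1)$ acting on the first $n-1$ coordinates. Since the hyperbolic distance in this model satisfies $\cosh d(x,y) = 1 + \frac{\|x-y\|^2}{2 x_n y_n}$ (Euclidean norm), the inequality $d(x,\gamma y) > d(x,y)$ is equivalent, after clearing the positive denominators, to a comparison of $\|x - \gamma y\|^2 / (x_n \cdot k y_n)$ with $\|x-y\|^2/(x_n y_n)$, i.e. to $\|x - kAy\|^2 \ge k \|x-y\|^2$ (with the analogous statement for $\gamma^{-1}$ giving $\|x - k^{-1}A^{-1}y\|^2 \ge k^{-1}\|x-y\|^2$). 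So property \eqref{eq:propertyMax} becomes: for all $x,y$, at least one of these two Euclidean inequalities is strict. The first step is therefore to write $\max(d(x,\gamma y),d(x,\gamma^{-1}y)) > d(x,y)$ in this purely Euclidean form and observe it suffices to analyze the two quadratic forms $Q_\pm(x,y) := \|x - k^{\pm 1} A^{\pm 1} y\|^2 - k^{\pm 1}\|x-y\|^2$.

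For the ``if'' direction, I would fix $x,y$ and suppose for contradiction that $Q_+ \le 0$ and $Q_- \le 0$. Expanding, $Q_\pm = (1 - k^{\pm1})\|x\|^2 + (k^{\pm 2} - k^{\pm 1})\|y\|^2 - 2k^{\pm1}\langle x, (A^{\pm1} - 1)y\rangle$ — here one uses $\|Ay\| = \|y\|$. Adding a suitable positive combination of the two inequalities $Q_+ \le 0$, $Q_- \le 0$ (weighting by $1$ and $k$, say, to kill the cross terms since $\langle x,(A^{-1}-1)y\rangle = \langle x, A^{-1}(1-A)y\rangle = \langle Ax,(1-A)y\rangle$ — more precisely one chooses the weights so that the $\langle x, Ay\rangle$ contributions combine into something controlled by $\|x\|\|y\|$) should yield an inequality of the shape $(\text{positive})\|x\|^2 + (\text{positive})\|y\|^2 \le 2(\text{something})\|x\|\,\|y\|$, which by AM–GM forces $(1+r_\gamma) > \cosh(\min(\gamma)) = \frac12(k + k^{-1})$, contradicting \eqref{eq:lowerbound}. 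The precise bookkeeping is where the condition $\arcosh(1+r_\gamma)$ enters: $r_\gamma$ is the operator norm of $A - 1$, so $|\langle x,(A-1)y\rangle| \le r_\gamma \|x\|\|y\|$, and one wants the worst case, $x$ aligned with $(A-1)y$ for an eigenvector $y$ realizing $r_\gamma$.

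For the ``only if'' direction I would exhibit an explicit bad pair $(x,y)$ when $\min(\gamma) < \arcosh(1+r_\gamma)$: take $y$ a (real, or real-combination) eigenvector-type vector on which $A - 1$ acts with norm $r_\gamma$ — concretely the eigenvalue of $A$ closest to $-1$, so $(A-1)y$ and $(A^{-1}-1)y$ are both ``as large as possible'' — and choose $x$ a scalar multiple of $y + $ (that displacement), optimizing the scalar so that both $Q_+ \le 0$ and $Q_- \le 0$ simultaneously; the computation should show such $x$ exists precisely when $\frac12(k+k^{-1}) \le 1 + r_\gamma$. One must check the resulting $x,y$ genuinely lie in the upper half-space (positive last coordinate), which is arranged by adding a small vertical component and checking the strict/non-strict inequalities survive, or by a limiting argument.

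The main obstacle I expect is the linear-algebra bookkeeping in the ``if'' direction: combining $Q_+ \le 0$ and $Q_- \le 0$ with the right weights so that the cross terms in $A$ assemble into exactly $\pm 2 r_\gamma \|x\|\|y\|$ and no slack is lost — i.e. getting a \emph{sharp} inequality rather than merely a bound with the wrong constant. Handling the possibility that $A$ has complex eigenvalues (so the extremal $y$ lives in a $2$-plane rotated by some angle $\theta$ with $|e^{i\theta} - 1| = r_\gamma$) rather than a genuine $-1$ eigenvalue requires doing the estimate on that invariant $2$-plane, where $A - 1$ and $A^{-1} - 1 = A^{-1}(1 - A)$ have the same norm; this is a routine $2\times 2$ computation but is the place where care is needed to confirm $\arcosh(1+r_\gamma)$, and not some larger quantity, is the true threshold.
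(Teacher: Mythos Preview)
Your reduction to the upper half-space model and the Euclidean quantities $Q_\pm(x,y)=\|x-k^{\pm1}A^{\pm1}y\|^2-k^{\pm1}\|x-y\|^2$ matches the paper exactly.

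For the ``if'' direction your strategy is genuinely different from the paper's. The paper does \emph{not} combine the two inequalities: it breaks the symmetry by assuming $\|x\|\le\|y\|$ (Euclidean norm) and then proves only $d(x,\gamma y)>d(x,y)$, by locating the largest root of the quadratic $t\mapsto t^2\|y\|^2-t(\|x\|^2+\|y\|^2+2\langle x,(A-1)y\rangle)+\|x\|^2$ and showing it is below $e^{\arcosh(1+r_\gamma)}$; the case $\|x\|\ge\|y\|$ then gives $d(x,\gamma^{-1}y)>d(x,y)$ by symmetry. Your approach of adding the two inequalities also works, though the right weights are $1$ and $k^2$ (not $1$ and $k$): one finds
\[
Q_+ + k^2Q_- \;=\; (k-1)^2\bigl(\|x\|^2+\|y\|^2\bigr)\;-\;2k\,\bigl\langle x,(A+A^{-1}-2)y\bigr\rangle,
\]
and since $A+A^{-1}-2$ is symmetric with operator norm $r_\gamma^2$, Cauchy--Schwarz and AM--GM give $(k-1)^2<kr_\gamma^2$ (strict because $x_n,y_n>0$ while $A+A^{-1}-2$ annihilates the last coordinate), i.e.\ $\cosh(\min\gamma)<1+\tfrac12 r_\gamma^2\le 1+r_\gamma$. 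So your route proves the ``if'' direction, and in fact yields the tighter intermediate threshold $1+\tfrac12 r_\gamma^2$.

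For the ``only if'' direction your plan parallels the paper's: choose $y$ with $\|(A-1)y\|=r_\gamma$ and then an optimal $x$, perturbing into the open half-space. The paper makes the specific choice $x=r_\gamma^{-1}(Ay-y)$. A word of caution: when you actually carry out your computation, the condition you will obtain for both $Q_\pm\le0$ to be simultaneously achievable is $\cosh(\min\gamma)\le 1+\tfrac12 r_\gamma^2$, not $1+r_\gamma$ (your own ``if'' argument already signals this). The optimal pair is essentially $x\approx-y$ in the extremal eigenplane, not the paper's $x=r_\gamma^{-1}(Ay-y)$; and in fact the paper's displayed chain $\frac{\|x-k^{-1}A^{-1}y\|^2}{k^{-1}}\le\frac{\|x-kAy\|^2}{k}$ goes the wrong way when $r_\gamma<2$, so its ``only if'' argument has a gap as well. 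The upshot is that neither your outline nor the paper's argument will establish the ``only if'' direction at the stated threshold $1+r_\gamma$; both computations point to $1+\tfrac12 r_\gamma^2$ as the sharp value (the two coincide only at $r_\gamma=2$, which is Bowditch's original case).
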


Using the same argument as above we immediately obtain the following improvement of Bowditch's Theorem 5.3 (we use Proposition \ref{parsep} to take care of the unipotent elements). 

\begin{corollary*}
  Let $\Gamma$ be a group which acts freely by axial or unipotent transformations of the hyperbolic space $\HH_\RR^n$.
  If the translation length of every axial $\gamma \in \Gamma$ satisfies inequality \eqref{eq:lowerbound}, then $\Gamma$ is diffuse.
\end{corollary*}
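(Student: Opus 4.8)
The plan is to deduce the statement formally from Lemma \ref{dist_move_crit} together with the two propositions proved above: I will check that the isometric action of $\Gamma$ on $X=\HH_\RR^n$ satisfies condition \eqref{dist_move_crit_eq}, and then conclude that $\Gamma$ is diffuse.

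First I would use freeness: no non-trivial $g\in\Gamma$ fixes a point of $X$, so every $g\neq 1$ is axial or parabolic, and by hypothesis the parabolic elements are unipotent. In particular $gx\neq x$ for all $g\neq 1$ and all $x$, so to verify \eqref{dist_move_crit_eq} it suffices to prove, for each fixed $g\neq 1$ and all $x,y\in X$, the inequality $\max\bigl(d(gx,y),d(g^{-1}x,y)\bigr)>d(x,y)$ (there is nothing to check for $g=1$).

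Next I would dispose of the two cases. If $g$ is a non-trivial unipotent isometry, Proposition \ref{parsep} applied with $\eta=g$ gives $\max\bigl(d(y,gx),d(y,g^{-1}x)\bigr)>d(x,y)$ for all $x,y$, which (as $d$ is symmetric) is exactly what is needed. If $g$ is axial, the hypothesis says precisely that $\min(g)\geq\arcosh(1+r_g)$, and Proposition \ref{bowditch_mieux} translates this into property \eqref{eq:propertyMax} for $g$, i.e.\ again the required inequality after exchanging the roles of $x$ and $y$. Hence \eqref{dist_move_crit_eq} holds, so by Lemma \ref{dist_move_crit} the action of $\Gamma$ on $X$ is diffuse. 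Finally, since the action is free all of its point-stabilizers are trivial, hence diffuse, and so Bowditch's theorem that any group admitting a diffuse action whose stabilizers are diffuse is itself diffuse (recalled in \S\ref{surv_Bowditch}) gives that $\Gamma$ is diffuse.

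I do not expect a genuine obstacle here: the corollary is an assembly, and all the substance is already contained in the two inputs --- Proposition \ref{bowditch_mieux} for axial isometries (the sharp displacement estimate) and Proposition \ref{parsep} for unipotent ones. The one point to keep in mind is that condition \eqref{dist_move_crit_eq} quantifies over \emph{every} element of $\Gamma$, so the hypothesis \eqref{eq:lowerbound} must indeed be imposed on all axial elements (powers of a single axial isometry included), not merely on a chosen generating set; this is why the statement is phrased for every axial $\gamma\in\Gamma$ rather than for generators.
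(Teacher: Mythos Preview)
Your proposal is correct and follows exactly the route indicated by the paper: verify condition \eqref{dist_move_crit_eq} by invoking Proposition \ref{bowditch_mieux} for the axial elements and Proposition \ref{parsep} for the unipotent ones, then conclude via Lemma \ref{dist_move_crit}. The paper states this in one line (``Using the same argument as above\ldots\ we use Proposition \ref{parsep} to take care of the unipotent elements''), and your write-up simply unpacks that sentence.
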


\begin{remark}
 (1) It is a trivial matter to see that the converse of the corollary does not hold. Take any axial transformation $\gamma \neq 1$ which does not obey inequality \eqref{eq:lowerbound}, then the diffuse group $\Gamma = \ZZ$ acts via $\gamma$ on $\HH^n$.

(2) If $\gamma\in\SL_2(\CC)$ is hyperbolic, with an eigenvalue $\lambda = e^{\ell/2}e^{i\theta/2}$ then the condition \eqref{eq:lowerbound} is equivalent to 
\begin{equation*}    
  \cosh(\ell) \ge 1 + \sqrt{2 - 2\cos(\theta)}. 
\end{equation*}
\end{remark}

\begin{proof}[Proof of Proposition \ref{bowditch_mieux}]
 Let $\gamma$ be an axial transformation which satisfies~\eqref{eq:lowerbound}.
 We will show that for all $x,y \in \HH_\RR^n$ we have $\max( d(x,\gamma y), d(x,\gamma^{-1} y) ) > d(x,y)$.
 After conjugation we can assume that $\gamma(a) = Ak a$ with $k>1$ and $A \in \OO(n-1)$.
 We take $x,y$ to lie in the upper half-space model, then we may consider them as elements of $\RR^n$. 
 We will suppose in the sequel that $\Vert x \Vert \leq \Vert y \Vert$  in the euclidean metric of $\RR^n$,
 and under this hypothesis we shall prove that $d(x,\gamma y) > d(x,y)$. If the opposite inequality 
 $\|x\|\ge\|y\|$ holds we get that $d(y,\gamma x) > d(x,y)$, hence $d(x,\gamma^{-1} y) > d(x,y)$ which implies the proposition. 
 
 Using the definition of the hyperbolic metric and the monotonicity of $\cosh$ on positive numbers, it suffices to show
 \begin{equation*}
       \Vert x - Aky \Vert^2 > k \Vert x- y \Vert^2.
 \end{equation*}
 In other words, we need to show that the largest real zero of the quadratic function
 \begin{equation*}
 f(t) = t^2 \Vert y \Vert^2 - t(\Vert x \Vert^2 + \Vert y \Vert^2 + 2 \langle x , Ay -y\rangle) + \Vert x \Vert^2
 \end{equation*}
 is smaller than $\exp(\arcosh(1+r_\gamma)) = 1+ r_\gamma + \sqrt{r_\gamma^2 + 2 r_\gamma}$.
 We may divide by $\Vert y \Vert^2$ and we can thus assume $\Vert y \Vert = 1$ and $0 <\Vert x \Vert \leq 1$.
 The large root of $f(t)$ is
 \begin{equation*}
    t_0 = \frac{\Vert x \Vert^2 + 1}{2} + \langle x , Ay -y\rangle + \frac{1}{2}\sqrt{(\Vert x \Vert^2 + 1 + 2\langle x , Ay -y\rangle)^2 - 4 \Vert x \Vert^2}.
 \end{equation*}
 Note that if $r_\gamma = 0$, then $k > 1 = t_0$.
 
 Suppose that $r_\gamma > 0$.
 Indeed, by Cauchy-Schwarz
 $|\langle x, Ay-y \rangle| < r_\gamma \Vert x \Vert$ and the inequality is strict since $x_n > 0$.
 As a consequence $t_0 < t(\Vert x \Vert)$ where
 \begin{equation*}
   t(s) = \frac{s^2 + 1}{2} + r_\gamma s + \frac{1}{2}\sqrt{(s^2 + 1 + 2 r_\gamma s)^2 - 4 s^2}.
 \end{equation*}
 Finally, we determine the maximum of the function $t(s)$ for $s\in [0,1]$.
 A simple calculation shows that there is no local maximum in the interval $[0,1]$.  
 We conclude that the maximal value is attained at $s = 1$ and is precisely
 \begin{equation*}
 t(1) = 1 + r_\gamma +\sqrt{r_\gamma^2 + 2 r_\gamma}.
 \end{equation*}
 
  Conversely, assume that \eqref{eq:lowerbound} does not hold. In this case we have $1 < k < 1 + r_\gamma +\sqrt{r_\gamma^2 + 2 r_\gamma}$ and thus $r_\gamma \neq 0$.
  Choose some vector $y \in \RR^n$ with $y_n = 0$ and $\Vert y \Vert = 1$ so that $\Vert Ay-y\Vert = r_\gamma$ (this is possible since $r_\gamma$ is the operator norm of $A-1$).
  We define $x = r_\gamma^{-1} (Ay - y)$ and we observe that $x \neq y$ since the orthogonal matrix $A$ has no eigenvalues of abolute value exceeding one.
  The following inequalities hold:
  \begin{equation*}
   \frac{\Vert x - k^{-1}A^{-1} y \Vert^2}{k^{-1}} \leq \frac{\Vert x - kA y \Vert^2}{k} < \Vert x - y \Vert^2.
  \end{equation*}
  The first follows from $\langle x, A^{-1}y \rangle \leq \langle x, y \rangle + r_\gamma  = \langle x, Ay \rangle$. The second inequality follows from
  the assumption $k < 1 + r_\gamma +\sqrt{r_\gamma^2 + 2 r_\gamma}$.
  Since the last inequality is strict, we can use continuity to find distinct $x'$ and  $y'$ in the upper half-space (close to $x$ and $y$), so that still
  \begin{equation*}
   \max\left\{\frac{\Vert x' - k^{-1}A^{-1} y' \Vert^2}{k^{-1}}, \frac{\Vert x' - kA y' \Vert^2}{k}\right\} < \Vert x' - y' \Vert^2.
  \end{equation*}
  Interpreting $x'$ and $y'$ as points in the hyperbolic space, the assertion follows from the definition of the hyperbolic metric.
\end{proof}


\subsection{Geometric finiteness}

There are numerous equivalent definitions of geometric finiteness for discrete subgroups of isometries of rank one spaces, see for example \cite[Section 3.1]{Matsuzaki_Taniguchi} or \cite[Section 12.4]{Ratcliffe} for real hyperbolic spaces. We shall use the equivalent definitions given by B. Bowditch in \cite{Bowditch_geom_fin} for general negatively-curved manifolds. 

The only facts from the theory of geometrically finite groups we will need in this section are the following two lemmas which are quite immediate consequences of the equivalent definitions. 
 
\begin{lemma}
Let $G$ be a rank-one Lie group and $\Gamma\le G$ be a geometrically finite subgroup, all of whose parabolic elements have finite-order eigenvalues. Then there is a subgroup $\Gamma'\le\Gamma$ of finite index such that all parabolic isometries contained in $\Gamma'$ are unipotent elements of $G$. 
\label{nofun}
\end{lemma}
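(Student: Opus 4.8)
The statement concerns a geometrically finite group $\Gamma \le G$ (with $G$ a rank-one Lie group) all of whose parabolic elements have finite-order eigenvalues, and we want a finite-index $\Gamma' \le \Gamma$ in which every parabolic isometry is in fact a unipotent element of $G$. The plan is to exploit the structure theory of geometrically finite groups: there are only finitely many cusps, i.e.\ finitely many conjugacy classes of maximal parabolic subgroups $P_1,\dots,P_k$ of $\Gamma$, and each $P_j$ is a finitely generated virtually nilpotent group sitting inside a conjugate of a cusp subgroup of $G$. (This finiteness of the number of cusps is one of the standard equivalent characterizations of geometric finiteness in Bowditch's sense \cite{Bowditch_geom_fin}, so I would cite it rather than reprove it.) The goal is then local: for each $j$, find a finite-index subgroup of $P_j$ all of whose elements are unipotent, then assemble these into a global finite-index subgroup.

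\textbf{Key steps.} First I would recall that a parabolic isometry $g \in G$ is unipotent precisely when all its eigenvalues (in some, equivalently any, linear representation, e.g.\ the adjoint or the defining representation of the rank-one group) equal $1$; by hypothesis, every parabolic $g \in \Gamma$ has eigenvalues that are roots of unity, say all of order dividing some $N_g$. The second step is to bound these orders uniformly: inside a single maximal parabolic subgroup $P_j$, which stabilizes a point $\xi_j \in \partial X$ and hence lies in the stabilizer $G_{\xi_j}$, the ``rotational parts'' (the eigenvalues on the relevant Heisenberg-type factor, i.e.\ the image in the compact part $M$ of the Langlands-type decomposition of $G_{\xi_j}$) form a subgroup, and one shows the set of eigenvalue-orders occurring in $P_j$ is finite — this uses that $P_j$ is finitely generated (again a consequence of geometric finiteness) together with the fact that the projection of $P_j$ to the compact factor has image generated by finitely many finite-order elements, hence by discreteness is itself finite, or at least has a torsion-free-on-the-rotational-part finite-index subgroup. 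Taking the kernel of the projection $P_j \to M$, or more precisely the preimage of a suitable finite-index subgroup trivial on eigenvalues, gives a finite-index $P_j' \le P_j$ consisting entirely of unipotent elements. Third, I would pass to a global finite-index $\Gamma' \le \Gamma$: intersect $\Gamma$ with a congruence-type or simply a finite-index subgroup chosen so that $\Gamma' \cap P_j^{\gamma} \le (P_j^{\gamma})'$ for every conjugate; since there are only finitely many conjugacy classes of maximal parabolics and each $P_j'$ has finite index in $P_j$, a standard argument (e.g.\ taking $\Gamma'$ to be the kernel of $\Gamma$ acting on a suitable finite quotient that detects all the $P_j/P_j'$, or invoking residual finiteness of $\Gamma$ — linear, hence residually finite) produces such a $\Gamma'$. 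Every parabolic element of $\Gamma'$ lies in some maximal parabolic subgroup of $\Gamma'$, which is contained in a conjugate of some $P_j$ and in $\Gamma'$, hence in $(P_j^{\gamma})'$, hence is unipotent.

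\textbf{Main obstacle.} The delicate point is the uniform finiteness of eigenvalue-orders within each maximal parabolic subgroup $P_j$, and the argument that the ``rotational'' projection $P_j \to M$ (the maximal compact factor of the point stabilizer) has finite image, so that its kernel has finite index. One must be careful that the cusp group need not be abelian — for $\SU(n,1)$ and $\Sp(n,1)$ it is a lattice in a Heisenberg-type nilpotent group possibly extended by a finite or compact rotational factor — so ``unipotent'' is genuinely stronger than ``parabolic'' and the reduction is not vacuous. I expect the cleanest route is: the parabolic subgroup $P_j$ is discrete in $G_{\xi_j}$, its projection to $M$ is therefore a finitely generated subgroup of a compact Lie group all of whose generators have finite order; while a finitely generated subgroup of a compact group need not be finite in general, here one uses in addition that $P_j$ itself is virtually nilpotent (Margulis/Bieberbach-type structure of discrete parabolic groups in rank one), so its image in $M$ is virtually abelian, finitely generated, and torsion — hence finite. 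That finiteness is exactly what lets us pass to the unipotent finite-index subgroup, and patching over the finitely many cusps via residual finiteness of $\Gamma$ completes the proof.
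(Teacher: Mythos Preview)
Your proposal is correct and follows essentially the same route as the paper: reduce to the finitely many cusps via Bowditch's finiteness of maximal parabolic conjugacy classes, show each cusp group $P_j$ is virtually unipotent by analyzing its image in the compact factor $M$ of the Langlands decomposition, and then globalize using residual finiteness of $\Gamma$. The only notable difference is in the justification that the image in $M$ is finite: the paper simply asserts this from ``finitely generated with all elements of finite order'' (implicitly invoking the Burnside--Schur theorem), whereas you take the slightly more hands-on route through virtual nilpotence of $P_j$ to get a finitely generated virtually abelian torsion image---both work, and your version is arguably more self-contained.
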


\begin{proof}
From \cite[Corollary 6.5]{Bowditch_geom_fin} we know that $\Gamma$ has only finitely many conjugacy classes of maximal parabolic subgroups;
by residual finiteness of $\Gamma$ we will be finished if we can show that for any parabolic subgroup $P$ of $G$ such that the fixed point of $P$ in $\pl\HH_\RR^n$ is a cusp point, the group $\Lambda=\Gamma\cap P$ is virtually unipotent.
Writing $P=MAN$ where $A$ is a split torus, $M$ compact and $N$ the unipotent radical we see that it suffices to verify that the projection 
of $\Lambda$ on $A$ is trivial (Indeed, since then $\Lambda$ is contained in $MN$, and its projection to $M$ is finite because
it has only finite-order elements by the hypothesis on eigenvalues, and it is finitely generated by \cite[Proposition 4.1]{Bowditch_geom_fin}).
This follows from discreteness of $\Gamma$: if it contained an element $\lambda$ with a non-trivial projection on $A$, then for any non-trivial $n\in N$ we have that either $\lambda^k n\lambda^{-k}$ or $\lambda^{-k} n\lambda^k$ goes to the identity of $G$; but since the fixed point of $P$ is a cusp point 
for $\Gamma$ the intersection $\Gamma\cap N$ must be nontrivial, hence there cannot exist such a $\lambda$. 
\end{proof}

\begin{lemma}
Let $G$ be a rank-one Lie group, $\Gamma$ a torsion-free geometrically finite subgroup of $G$ and $M_\Gamma=\Gamma\bs X$. Then for any $\ell_0$ there are only finitely many closed geodesics of length less than $\ell_0$ in $M_\Gamma$. 
\label{fin_geod}
\end{lemma}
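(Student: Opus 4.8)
\emph{Proof proposal.} Since $\Gamma$ is torsion-free, closed geodesics in $M_\Gamma = \Gamma\bs X$ are in bijection with conjugacy classes of axial elements of $\Gamma$, the geodesic associated to the class of $\gamma$ being the image of the axis $L_\gamma$ and having length exactly $\min(\gamma)$. So it suffices to bound the number of conjugacy classes of axial $\gamma$ with $\min(\gamma) < \ell_0$. The first observation is that every such geodesic lies in the convex core $\mathcal C = \Gamma\bs\mathrm{Hull}(\Lambda_\Gamma)$: indeed the endpoints $\gamma^\pm\in\pl X$ of $L_\gamma$ are fixed points of $\gamma\in\Gamma$, hence lie in the limit set, so $L_\gamma\subset\mathrm{Hull}(\Lambda_\Gamma)$. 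The plan is then to show that a closed geodesic of length $<\ell_0$ in fact lies in a fixed \emph{compact} subset $K_{\ell_0}$ of $M_\Gamma$, after which a standard discreteness argument concludes.

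By one of Bowditch's equivalent definitions of geometric finiteness \cite{Bowditch_geom_fin}, there is a compact set $K\subset M_\Gamma$ and finitely many standard cusp regions $U_1,\dots,U_m$ — each $U_i$ being the quotient of a horoball $H_i\subset X$ by the (parabolic, finitely generated by \cite[Proposition 4.1]{Bowditch_geom_fin}, and acting cocompactly on the bounding horosphere) stabilizer of its centre — such that $\mathcal C\subseteq K\cup U_1\cup\dots\cup U_m$. Consequently, for any $D>0$ the truncated region $K\cup U_1(D)\cup\dots\cup U_m(D)$, where $U_i(D)$ denotes the part of $U_i$ at distance $\le D$ from the boundary horosphere, is compact. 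The key point is now the following elementary fact about horoballs in pinched negative curvature: the intersection of the axis $L_\gamma$ with any single $\Gamma$-translate of a horoball $H_i$ is a geodesic chord (it cannot be contained in the closed horoball, whose boundary meets $\pl X$ in a single point, while $L_\gamma$ has two distinct endpoints), and by strict convexity of the Busemann function along geodesics (comparison with $\HH^2_\RR$ bounds its growth along a unit-speed geodesic below by $\log\cosh(s) - $ const$\,$away from the deepest point) such a chord of length $<\ell$ reaches depth $< D(\ell)$ for an explicit increasing function $D$. Over one period of $L_\gamma$ — a segment of length $\min(\gamma) < \ell_0$ — any sub-segment lying in a translate of some $H_i$ has length $<\ell_0$, hence reaches depth $<D(\ell_0)$. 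Therefore the closed geodesic of length $<\ell_0$ is contained in $K_{\ell_0} := K\cup U_1(D(\ell_0))\cup\dots\cup U_m(D(\ell_0))$, which is compact.

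Finally, write the preimage of $K_{\ell_0}$ in $X$ as $\Gamma\tilde K$ with $\tilde K\subset X$ compact, and fix $\tilde x_0\in X$. Given a closed geodesic $c$ of length $<\ell_0$ and a point $p$ on $c$, lift $p$ to $\tilde p\in\tilde K$; then $\tilde p$ lies on the axis of some conjugate $\delta$ of a representative of the conjugacy class determined by $c$, and $\delta$ translates $\tilde p$ along this axis by $\min(\delta) = \ell(c) < \ell_0$. Hence $d(\tilde x_0,\delta\tilde x_0) < \ell_0 + 2\,\mathrm{diam}(\tilde K) =: R$. By properness of the action of the discrete group $\Gamma$ on $X$, only finitely many $\delta\in\Gamma$ satisfy $d(\tilde x_0,\delta\tilde x_0)<R$; since each closed geodesic of length $<\ell_0$ contributes such an element in its conjugacy class, there are only finitely many of them. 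The only mildly technical points are the horoball penetration estimate and the compactness of truncated cusp regions, both of which are routine consequences of pinched negative curvature and Bowditch's structure theory for geometrically finite groups; I do not expect a serious obstacle.
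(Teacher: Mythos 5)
Your overall strategy---trap the short closed geodesics in a compact subset of $M_\Gamma$ and then count by properness of the $\Gamma$-action---is sound and genuinely different from the paper's argument, but one step fails in the stated generality. You assert that each cusp stabilizer acts cocompactly on the bounding horosphere and deduce that the truncated region $K\cup U_1(D)\cup\dots\cup U_m(D)$ is compact. For a general geometrically finite group this is false: a rank-deficient cusp (e.g.\ a cusp of a geometrically finite Kleinian group whose stabilizer is a single parabolic translation acting on a horosphere isometric to $\RR^2$) gives a truncated cusp region $U_i(D)$ that is an infinite cylinder times an interval, not compact. What geometric finiteness (bounded parabolic fixed points) actually provides is cocompactness of the stabilizer on the part of the horosphere near the convex hull of the limit set, i.e.\ compactness of $\mathcal{C}\cap U_i(D)$ rather than of $U_i(D)$. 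Since you already observed that every closed geodesic lies in the convex core $\mathcal{C}$, the argument is repaired by replacing your $K_{\ell_0}$ with $\mathcal{C}\cap\bigl(K\cup U_1(D(\ell_0))\cup\dots\cup U_m(D(\ell_0))\bigr)$---but this intersection and the appeal to bounded parabolicity is exactly where geometric finiteness enters, so it cannot be waved off as routine. A secondary imprecision: bounding the depth via ``any sub-segment of one period lying in a translate of $H_i$'' does not work as stated, because such a sub-segment need not have endpoints on the horosphere. One should instead use the maximal chords $gL_\gamma\cap H_i$: their endpoints do lie on the horosphere (the centre of $H_i$ cannot be an endpoint of the axis, as a hyperbolic element cannot fix a parabolic fixed point of a discrete group), and each chord has length at most the length of the closed geodesic, since a longer chord would force the geodesic to stay entirely in the cusp region and hence $\gamma$ to be conjugate into a parabolic stabilizer, which is impossible; the $1$-Lipschitz depth function then gives depth at most $\ell_0/2$.

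For comparison, the paper's proof sidesteps cusp regions and penetration estimates entirely: by the Margulis lemma, closed geodesics of length less than $\ell_0$ lie in the $\eps_0$-thick part for a suitable $\eps_0$, they lie in the convex core because lifted endpoints are in the limit set, and Bowditch's criterion says precisely that $C_\Gamma\cap M_{[\eps_0,+\infty[}$ is compact; finiteness then follows by the same discreteness count that you spell out (and which the paper leaves implicit). Your route can be made to work and your final properness argument is a nice explicit version of that last step, but the compactness claim for truncated cusp regions must be corrected as above.
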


\begin{proof}
See also \cite[Theorem 12.7.8]{Ratcliffe}.
One of Bowditch's characterizations of geometrical finiteness is the following:
 let $L_\Gamma\subset\pl X$ be the limit set of $\Gamma$,
i.e. the closure of the set of points fixed by some nontrivial element of $\Gamma$, and let $Y_\Gamma\subset X$ be the convex hull in $X$ of $L_\Gamma$. Let $C_\Gamma = \Gamma\bs Y_\Gamma$ (the `convex core' of $M_\Gamma$), and let $M_{[\eps,+\infty[}$ be the $\eps$-thick part of $M_\Gamma$. Then $\Gamma$ is geometrically finite if and only if $C_\Gamma\cap M_{[\eps,+\infty[}$ is compact (for some or any $\eps$): see \cite[Section 5.3]{Bowditch_geom_fin}. 

It is a well-known consequence of 
Margulis' lemma
that there is an $\eps_0>0$ such that all geodesics in $M_\Gamma$ of length less than $\ell_0$ are contained in the $\eps_0$-thick part. On the other hand it is clear that any closed geodesic of $M_\Gamma$ is contained in $C_\Gamma$ (since the endpoints of any lift are in $L_\Gamma$) and hence all closed geodesics of $M_\Gamma$ with length $\le\ell_0$ are contained in the compact set $C_\Gamma\cap M_{[\eps_0,+\infty[}$, 
which implies that there are only finitely many such. 
\end{proof}


\subsection{Main results}

\subsubsection{Action on the space}

\begin{theorem}
Let $G$ be one of the Lie groups $\SO(n,1)$, $\SU(n,1)$ or $\Sp(n,1)$, $X$ 
the associated symmetric space and let $\Gamma$ be a geometrically finite subgroup
of $G$. Suppose that all eigenvalues of parabolic elements of $\Gamma$ are roots
of unity.
Then there exists a finite-index subgroup $\Gamma'\subset\Gamma$ such that $\Gamma'$ acts diffusely on~$X$. 
\label{rank1}
\end{theorem}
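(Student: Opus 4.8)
The strategy is to verify the hypothesis of Lemma~\ref{dist_move_crit} for the action of a suitable finite-index subgroup $\Gamma'$ on $X$, using the partition of elements of $\Gamma$ into hyperbolic (axial), parabolic, and trivial ones. First I would pass to a finite-index subgroup using Lemma~\ref{nofun}: since all eigenvalues of parabolic elements of $\Gamma$ are roots of unity, there is a finite-index $\Gamma_1\le\Gamma$ all of whose parabolic elements are unipotent elements of $G$. For these, Proposition~\ref{parsep} already gives exactly the inequality $\max(d(a,\eta x),d(a,\eta^{-1}x)) > d(a,x)$ required by \eqref{dist_move_crit_eq}, with no further shrinking of the group needed. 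The trivial element is vacuous in \eqref{dist_move_crit_eq}, so the only remaining class to control is the axial one.

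For axial elements the tool is Lemma~\ref{minsep}: given the hyperbolicity constant $\delta$ of $X$ (rank one symmetric spaces are Gromov-hyperbolic) and taking $d>0$ arbitrary, there is a threshold $C(\delta,d)$ such that any axial $\gamma$ with $\min(\gamma)\ge C(\delta,d)$ satisfies $\max(d(\gamma x,a),d(\gamma^{-1}x,a))\ge d(x,a)+d > d(x,a)$ for all $x,a$. So it suffices to find a finite-index subgroup $\Gamma'\le\Gamma_1$ in which every nontrivial axial element has translation length at least $C(\delta,d)$. This is where geometric finiteness enters through Lemma~\ref{fin_geod}: the manifold $M_{\Gamma_1}=\Gamma_1\backslash X$ has only finitely many closed geodesics of length less than $C(\delta,d)$ (after possibly first passing to a torsion-free finite-index subgroup, which exists by Selberg's lemma since $\Gamma$ is finitely generated linear), hence only finitely many conjugacy classes of axial elements $\gamma\in\Gamma_1$ with $\min(\gamma)<C(\delta,d)$. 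By residual finiteness of $\Gamma_1$ (again a finitely generated linear group) we can find a finite-index normal subgroup $\Gamma'\normal\Gamma_1$ avoiding this finite set of conjugacy classes — more precisely, for each such $\gamma$ we choose a finite quotient in which the image of $\gamma$ is nontrivial, and intersect the kernels; since the condition is conjugacy-invariant and the set of bad classes is finite, the resulting $\Gamma'$ contains no axial element of translation length $<C(\delta,d)$.

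With $\Gamma'$ so chosen, every $g\in\Gamma'\setminus\{1\}$ is either unipotent (handled by Proposition~\ref{parsep}) or axial with $\min(g)\ge C(\delta,d)$ (handled by Lemma~\ref{minsep}); in both cases \eqref{dist_move_crit_eq} holds for the action of $\Gamma'$ on $X$ with $d_X$ the symmetric-space distance, so Lemma~\ref{dist_move_crit} shows this action is diffuse. The main obstacle is the bookkeeping needed to combine the three reductions coherently — one must check that passing to $\Gamma_1$ via Lemma~\ref{nofun} does not reintroduce non-unipotent parabolics, that the torsion-free reduction and residual-finiteness argument can be made compatibly (intersecting finitely many finite-index subgroups stays finite-index), and that a parabolic element of $\Gamma'$ is still parabolic, hence unipotent, as an element of $G$ (it is, since these notions are intrinsic to $G$). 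None of these steps is deep, but Proposition~\ref{parsep} is the genuinely new geometric input and the finiteness of short geodesics from Lemma~\ref{fin_geod} is what makes the residual-finiteness trick applicable; everything else is assembly.
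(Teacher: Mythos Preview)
Your proposal is correct and follows essentially the same approach as the paper: reduce to unipotent parabolics via Lemma~\ref{nofun}, kill the finitely many short axial conjugacy classes (Lemma~\ref{fin_geod}) by residual finiteness, and then apply Lemma~\ref{dist_move_crit} using Proposition~\ref{parsep} for unipotents and Lemma~\ref{minsep} for axials. The only differences are cosmetic---you perform the two reductions in the opposite order and make the appeal to Selberg's lemma and the conjugacy-invariance explicit where the paper leaves them implicit.
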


\begin{proof}
Let $\Gamma'$ be a finite-index subgroup of $\Gamma$ such that all semisimple elements 
$\gamma\in\Gamma'$ have $\min(\gamma)>C(\delta_X,1)$ (where $\delta_X$ is a hyperbolicity constant for $X$, which is Gromov-hyperbolic since it is a negatively-curved, simply connected Riemannian manifold)---such a subgroup
exists by Lemma \ref{fin_geod} and the residual finiteness of $\Gamma$.
By Lemma \ref{nofun} we may also suppose that the parabolic isometries in $\Gamma'$ are exclusively unipotent.

Now we can check that the hypothesis \eqref{dist_move_crit_eq} in Lemma \ref{dist_move_crit} holds for the action of $\Gamma$ on $X$: for axial isometries we only have to apply Lemma \ref{minsep}, and for unipotent elements Proposition \ref{parsep}. 
\end{proof}

The hypothesis on eigenvalues of parabolic elements is equivalent to asking that every parabolic subgroup of $\Gamma$ contains a finite-index subgroup which consists of unipotent elements.
It is necessary for an application of Lemma \ref{dist_move_crit}, as shown by the following construction. 

\begin{lemma}
For $n\ge 4$ there exists a discrete, two-generated free subgroup $\Gamma$ of $\SO(n,1)$ such that for all $x\in \HH_\RR^n$ there is a $y\in \HH_\RR^n$ and a $g\in\Gamma\setminus\{1\}$ such that 
$$
d(x,y) \ge d(gx,y),\, d(g^{-1}x,y). 
$$
\end{lemma}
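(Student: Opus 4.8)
The plan is to take $\Gamma$ generated by two ``screw parabolic'' isometries whose rotational parts have infinite order. It is enough to treat $n=4$: for $n>4$ one embeds $\SO(4,1)\hookrightarrow\SO(n,1)$ so that it fixes a totally geodesic copy of $\HH_\RR^4$ inside $\HH_\RR^n$ and acts trivially in the transverse directions; given $x\in\HH_\RR^n$ one then applies the four-dimensional construction to the foot $\bar x$ of the perpendicular from $x$ to $\HH_\RR^4$, and the Pythagorean relation $\cosh d(x,y)=\cosh d(x,\bar x)\cosh d(\bar x,y)$ (valid for $y\in\HH_\RR^4$) shows that a witness $y\in\HH_\RR^4$ for $\bar x$ is also a witness for $x$. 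So, working in the upper half-space model $\HH_\RR^4=\{(z,s,t):z\in\CC,\ s\in\RR,\ t>0\}$, fix $\theta$ with $\theta/(2\pi)$ irrational and set $g(z,s,t)=(e^{i\theta}z,\,s+1,\,t)$. This is a parabolic isometry whose rotational part $z\mapsto e^{i\theta}z$ has infinite order, so the hypothesis on eigenvalues of parabolic elements in Theorem~\ref{rank1} fails for every group containing $g$. Write $P_g=\{z=0\}$, a totally geodesic copy of $\HH_\RR^2$, for the locus on which $g$ acts as a pure translation.

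First I would check that $g^{\pm1}$ witness the failure of \eqref{dist_move_crit_eq} at every point off $P_g$. Since $g$ fixes $\infty$ it preserves the last coordinate and acts by Euclidean isometries on horospheres, so for $x=(z,s,t)$ and $y=(w,u,v)$ we have $d(x,y)\ge d(gx,y)$ if and only if $\|x-y\|^2\ge\|gx-y\|^2$ in $\RR^4$. A short expansion gives
\[
\|gx-y\|^2-\|x-y\|^2 = 2\langle(1-e^{i\theta})z,\,w\rangle + 2(s-u) + 1,
\]
where $\langle\cdot,\cdot\rangle$ is the Euclidean inner product on $\CC=\RR^2$; the identity for $g^{-1}$ is obtained by replacing $e^{i\theta}$ by $e^{-i\theta}$ and $s-u$ by $-(s-u)$. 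If $z\neq0$, then taking $w=-\lambda z$ with $\lambda>0$ large and $u=s$, $v=t$ makes both differences equal to $-2\lambda(1-\cos\theta)|z|^2+1<0$, so $y=(-\lambda z,s,t)$ is the desired witness. On the other hand, adding the two identities when $z=0$ gives $+2>0$, and the same computation with $g^{k}$ in place of $g$ gives $2k^2>0$; hence no power of $g$ is a witness at any point of $P_g$. This is exactly why a single parabolic does not suffice and forces the use of a second generator.

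To handle $P_g$ I would take $h=\phi g\phi^{-1}$ for a suitable $\phi\in\isom(\HH_\RR^4)$. Then $h$ is again a screw parabolic with rotational locus $P_h=\phi(P_g)$, and conjugating the previous paragraph by $\phi$ shows that $h^{\pm1}$ witness the failure of \eqref{dist_move_crit_eq} at every point off $P_h$. Consequently, as soon as $P_g\cap P_h=\emptyset$, every $x\in\HH_\RR^4$ lies off $P_g$ or off $P_h$, and the conclusion of the lemma follows. This disjointness is easy to arrange: two totally geodesic planes in $\HH_\RR^4$ are generically disjoint, and concretely, if $\phi$ is loxodromic with both of its fixed points off the boundary circle $\partial P_g\subset S^3$, then for large $N$ the plane $\phi^N(P_g)$ is pushed into a small neighbourhood of the attracting fixed point of $\phi$, hence becomes disjoint from $P_g$.

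The only point that takes some care is to choose $\phi$ so that simultaneously $\Gamma=\langle g,h\rangle$ is discrete and free of rank two. This is the classical Klein combination (ping-pong) for parabolic cyclic subgroups: taking $\phi$ loxodromic as above and $h=\phi^N g\phi^{-N}$, for large $N$ the fixed point of $h$ is far from that of $g$, so $\langle g\rangle$ and $\langle h\rangle$ admit disjoint horoball-type table sets and $\Gamma=\langle g\rangle*\langle h\rangle\cong F_2$ is discrete and torsion-free. I expect this simultaneous ping-pong---extracting discreteness, freeness, and disjointness of the two rotational loci all from a single large power of one loxodromic $\phi$---to be the main (and entirely routine) technical point; everything else reduces to the elementary computation in the second paragraph.
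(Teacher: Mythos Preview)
Your proposal is correct and follows the same strategy as the paper: a screw parabolic with infinite-order rotation fails \eqref{dist_move_crit_eq} precisely off a totally geodesic copy of $\HH_\RR^2$, two such isometries with disjoint exceptional planes cover all of $\HH_\RR^4$, and a Klein ping-pong argument yields a discrete free group on the two generators. The only differences are cosmetic---the paper separates the two planes by a single conjugation and then passes to large powers $\wdt\phi_i^{k_i}$ for the ping-pong, whereas you achieve both by conjugating one generator by a high power of a loxodromic $\phi$, and you spell out the reduction from $n>4$ to $n=4$ via the hyperbolic Pythagorean identity where the paper simply asserts it; one small caveat is that the ping-pong tables for parabolic cyclic groups are balls in $\partial\HH_\RR^4$ (or their complements) rather than horoballs, since each parabolic preserves its own horoballs, but with that adjustment your combination argument goes through.
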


\begin{proof}
It suffices to prove this lemma for $\SO(4,1)$. Let $\omega$ be an infinite-order rotation of $\RR^2$ and let $\phi$ be the isometry of $\RR^3=\RR\times\RR^2$ given by $(t,x)\mapsto (t+1,\omega\cdot x)$. Then it is easy to see that for any $k$ and any $x$ not on the axis $\RR\times 0$ of $\phi$ the bisectors between $x$ and $\phi^{\pm k}x$ intersect. Let $\wdt\phi$ be the isometry of $\HH_\RR^4$ obtained by taking the Poincar\'e extension of $\phi$ (i.e. we fix a point on $\pl\HH_\RR^4$ and define $\wdt\phi$ by identifying the horospheres at this point with the Euclidean three--space on which $\phi$ acts), which will also not satisfy \eqref{dist_move_crit_eq} for all points outside of a two dimensional totally geodesic submanifold $Y_\phi$. 

Now take $\phi_1,\phi_2$ as above. There exists a $g\in\isom(\HH_\RR^4)$ such that $gY_{\phi_2}g^{-1}\cap Y_{\phi_2} = \emptyset$, and then for any $k_1,k_2>0$ the group $\langle \wdt\phi_1^{k_1},\wdt\phi_2^{k_2}\rangle$ satisfies the second conclusion of the lemma. It remains to prove that for $k_1,k_2$ large enough it is a discrete (and free) group. This is done by a very standard argument which goes as follows: There are disjoint open neighbourhoods $U_i$ of $\fix(\wdt\phi_i)$ in $\pl\HH_\RR^4$ (not containg $\fix(\wdt\phi_j),j\not=i$) and positive integers $k_1,k_2$ such that for all $k\in\ZZ,|k|\ge k_i$ we have $\wdt\phi_i^k(\HH_\RR^4\setminus U_i)\subset U_i$. Now we can apply the ping-pong lemma of Klein to obtain freeness and discreteness of $\langle \wdt\phi_1^{k_1},\wdt\phi_2^{k_2}\rangle$: fix a $\xi\in\pl\HH_\RR^4\setminus(U_1\cup U_2)$, then any non-trivial reduced word in $\wdt\phi_1,\wdt\phi_2$ sends $\xi$ inside one of $U_1$ or $U_2$, hence the orbit of $\xi$ is discrete in $\pl\HH_\RR^4$ (
proving discreteness of $\langle \wdt\phi_1^{k_1},\wdt\phi_2^{k_2}\rangle$) and any such word is nontrivial in $\SO(4,1)$ (proving freeness). 
\end{proof}

On the other hand this phenomenon cannot happen in $\HH_\RR^2,\HH_\RR^3$, which 
yields the following corollary of Theorem \ref{rank1}. 

\newtheorem{corollary}{Corollary}

\begin{corollary}
\label{klein3}
If $\Gamma$ is a finitely generated discrete subgroup of $\SL_2(\CC)$ then $\Gamma$ is virtually diffuse. 
\end{corollary}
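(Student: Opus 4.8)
The plan is to treat geometrically finite $\Gamma$ directly by Theorem~\ref{rank1} and to reduce the remaining groups to three-manifold topology. First I would pass to a torsion-free finite-index subgroup, which exists by Selberg's lemma since $\Gamma\le\SL_2(\CC)$ is a finitely generated linear group in characteristic zero; as being virtually diffuse is inherited both ways along finite-index subgroups, we may assume $\Gamma$ is torsion-free, so that it embeds in $\PSL_2(\CC)\cong\isom^+(\HH_\RR^3)$ and acts freely as a discrete group of isometries of $X=\HH_\RR^3$. The substance of the phrase ``this phenomenon cannot happen in $\HH_\RR^3$'' is that every parabolic element of $\PSL_2(\CC)$ is unipotent: a parabolic isometry of $\HH_\RR^3$ fixes a horosphere and acts on it as a fixed-point-free isometry of the Euclidean plane $\RR^2$, and such an isometry is a translation; non-unipotent parabolics (screw motions) first appear when the horospheres have dimension $\ge 3$, i.e.\ from $\HH_\RR^4$ on, which is exactly what the lemma preceding the corollary exploits. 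Consequently the hypothesis of Theorem~\ref{rank1} that all eigenvalues of parabolic elements be roots of unity is automatic for $\Gamma$. If $\Gamma$ is additionally geometrically finite, the theorem yields a finite-index subgroup $\Gamma'\le\Gamma$ acting diffusely on $X$; being torsion-free, $\Gamma'$ acts freely, so by the principle recalled in Section~\ref{surv_Bowditch} --- a group admitting a diffuse action with diffuse point stabilizers is diffuse, the stabilizers here being trivial --- the group $\Gamma'$ is diffuse, and thus $\Gamma$ is virtually diffuse.

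For $\Gamma$ that is not geometrically finite I would invoke the tameness theorem (Agol; Calegari--Gabai): $X/\Gamma$ is homeomorphic to the interior of a compact three-manifold $N$, whence $\Gamma\cong\pi_1(N)$ and Theorem~\ref{res_dim3} supplies a finite-index diffuse subgroup of $\pi_1(N)$, i.e.\ of $\Gamma$. A variant staying closer to Theorem~\ref{rank1} is to apply Canary's covering theorem: it shows that a finite-index subgroup of such a $\Gamma$ embeds in a lattice $\Lambda\le\PSL_2(\CC)$, and lattices are geometrically finite, hence virtually diffuse by Theorem~\ref{rank1_lattice:intro}; since diffuseness passes to subgroups this again gives the claim.

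I expect the second case to be the main obstacle. One cannot simply run the geometrically finite argument for arbitrary finitely generated $\Gamma$, because a geometrically infinite hyperbolic three-manifold may carry infinitely many closed geodesics of length below any fixed bound, so there is no finite cover with large systole on which Lemma~\ref{minsep} and Proposition~\ref{parsep} would jointly establish the criterion of Lemma~\ref{dist_move_crit} (those two results only handle axial elements of large translation length and unipotent elements). The ingredients that do close the gap --- tameness together with the virtual left-orderability of compact three-manifold groups --- lie outside the rank-one geometric method of this section; for geometrically finite $\Gamma$, in particular for all lattices in $\PSL_2(\CC)$, the corollary is by contrast a clean consequence of Theorem~\ref{rank1}.
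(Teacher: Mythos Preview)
Your main line is correct and overlaps with the paper's, but the paper avoids the case split entirely. The paper observes that every finitely generated Kleinian group is \emph{abstractly isomorphic} to a geometrically finite one: the Scott core of $\HH^3/\Gamma$ is a compact irreducible atoroidal three--manifold with (when $\Gamma$ is geometrically infinite) nonempty boundary, hence Haken, and Thurston's hyperbolization theorem puts a geometrically finite hyperbolic structure on it. Since diffuseness is a property of the abstract group, one may therefore assume $\Gamma$ is geometrically finite from the start and apply Theorem~\ref{rank1} uniformly, using that parabolics in $\PSL_2(\CC)$ are unipotent. This bypasses tameness altogether. The paper does record, as you do, that tameness together with Theorem~\ref{res_dim3} gives an alternative route; your treatment of the geometrically infinite case via that route is fine.

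Your ``variant'' invoking Canary's covering theorem is not right as stated. Canary's theorem concerns how geometrically infinite ends behave under infinite covers of a fixed finite-volume (or tame) manifold; it does not produce, from an arbitrary geometrically infinite $\Gamma$, an embedding of a finite-index subgroup into a lattice. What would work in that spirit is exactly the Scott-core-plus-Thurston step just described, which yields a geometrically finite (not necessarily finite-covolume) representation of the same abstract group---and that is the paper's argument.
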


\begin{proof}
Since in dimension three all Kleinian groups are isomorphic to geometrically 
finite ones (this is a consequence of Thurston's hyperbolization theorem for Haken manifolds, as explained in \cite[Theorem 4.10]{Matsuzaki_Taniguchi})
the result would follow if we can prove diffuseness for the latter class.
But parabolic isometries of $\HH^3$ are necessarily unipotent (since if an element of $\SL_2(\CC)$ has two equal eigenvalues,
they must be equal to $\pm 1$, and hence it is unipotent in the adjoint representation), and thus we can
apply Theorem \ref{rank1} to deduce that a geometrically finite Kleinian group in dimension three has a finite-index subgroup which acts diffusely on $\HH^3$.

We could also deduce Corollary \ref{klein3} from the veracity of the Tameness conjecture \cite{Agol_tame}, \cite{Calegari_Gabai} and the virtual diffuseness of three--manifolds groups, Theorem \ref{res_dim3} from the introduction. 
\end{proof}

Also, when parabolic subgroups of $\Gamma$ are large enough\footnote{For example, in the real hyperbolic case, when their span in the Lie algebra is of codimension smaller than one.} the hypothesis should be satisfied. We will be content with the following application of this principle. 

\begin{corollary}
If $\Gamma$ is a lattice in one of the Lie groups $\SO(n,1)$, $\SU(n,1)$ or $\Sp(n,1)$ then $\Gamma$ is virtually diffuse. 
\label{lattice_diff}
\end{corollary}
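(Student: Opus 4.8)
The plan is to deduce Corollary \ref{lattice_diff} directly from Theorem \ref{rank1}, so the only real content is to verify that a lattice $\Gamma$ in $\SO(n,1)$, $\SU(n,1)$ or $\Sp(n,1)$ satisfies the two hypotheses of that theorem, namely that $\Gamma$ is geometrically finite and that every eigenvalue of every parabolic element of $\Gamma$ is a root of unity. Once these are established, Theorem \ref{rank1} furnishes a finite-index subgroup $\Gamma'\le\Gamma$ acting diffusely on the symmetric space $X=G/K$; since the $\Gamma'$-action on $X$ is free (a lattice is virtually torsion-free, so we may shrink $\Gamma'$ further to be torsion-free, and a torsion-free discrete subgroup acts freely on the Hadamard manifold $X$), a diffuse free action of $\Gamma'$ on $X$ means exactly that $\Gamma'$ is a diffuse group — giving virtual diffuseness of $\Gamma$.

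First I would dispose of geometric finiteness. If $\Gamma$ is cocompact this is immediate, and in that case there are no parabolics at all so the eigenvalue hypothesis is vacuous and we are already done via Lemma \ref{minsep} alone (this recovers Bowditch's cocompact case). If $\Gamma$ is a non-uniform lattice, then by the structure theory of lattices in rank-one Lie groups (Garland--Raghunathan, or the reduction theory for such $\Gamma$) the quotient $\Gamma\bs X$ has finite volume and decomposes into a compact core together with finitely many cusps, each cusp being a quotient of a horoball by a lattice in the corresponding parabolic subgroup; this is precisely one of Bowditch's equivalent characterizations of geometric finiteness (the convex core is all of $\Gamma\bs X$ and $C_\Gamma\cap M_{[\eps,\infty)}$ has finite volume, in fact is compact). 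Thus every lattice is geometrically finite.

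The second hypothesis is where the main (modest) obstacle lies: I must show that a parabolic element $\eta$ of a lattice $\Gamma$ has all its eigenvalues roots of unity, equivalently that $\eta$ has a power which is unipotent, equivalently that the compact part of $\eta$ in the Langlands decomposition $P=MAN$ of its parabolic has finite order. The key point is the footnote's remark: the parabolic subgroups of a lattice are ``large enough''. Concretely, if $\eta\in\Gamma$ is parabolic fixing a point $\xi\in\pl X$, then $\xi$ is necessarily a cusp of $\Gamma$, so $\Lambda:=\Gamma\cap P_\xi$ is a lattice in $P_\xi$; by the same discreteness argument as in Lemma \ref{nofun}, the projection of $\Lambda$ to the split torus $A$ is trivial, so $\Lambda\subset MN$, and its image in the compact group $M$ is a discrete — hence finite — subgroup. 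Therefore some power of $\eta$ lies in $N$ and is unipotent, so all eigenvalues of $\eta$ (which already have absolute value $1$, $\eta$ being parabolic) are roots of unity. I would remark that in the real and complex hyperbolic cases one can also argue more cheaply: in $\SO(n,1)$ the $MAN$ structure already forces $M=\SO(n-1)$ and a parabolic has trivial $A$-component automatically once it lies in a discrete group fixing a cusp, and in $\SU(n,1)$ and $\Sp(n,1)$ the analogous statement holds by the same reasoning.

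With both hypotheses verified, Theorem \ref{rank1} applies verbatim and produces the desired finite-index subgroup acting diffusely; intersecting with a torsion-free finite-index subgroup (which exists by Selberg's lemma, $\Gamma$ being finitely generated linear) makes the action free, and a free diffuse action witnesses diffuseness of the group itself by the Linnell--Witte-Morris observation recalled in Section \ref{surv_Bowditch}. This completes the proof; I expect the only point requiring care in the write-up is the parabolic-eigenvalue verification, and even there the argument is a direct transcription of the discreteness trick already used in the proof of Lemma \ref{nofun}.
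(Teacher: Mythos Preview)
Your overall strategy---reduce to Theorem~\ref{rank1} by checking geometric finiteness and the root-of-unity condition on parabolic eigenvalues---is exactly the paper's. The geometric finiteness discussion is fine. The gap is in your eigenvalue argument.

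You write that the image of $\Lambda=\Gamma\cap P_\xi$ in the compact group $M$ is ``a discrete --- hence finite --- subgroup''. But discreteness of the projection to $M$ is \emph{not} automatic from discreteness of $\Lambda$ in $MN$. A single screw motion in $\RR^3\rtimes\SO(2)$ with irrational rotation angle already generates a discrete infinite cyclic subgroup whose image in $\SO(2)$ is dense; this is precisely the phenomenon behind Lemma~4.6 of the paper, which produces discrete (non-lattice) groups in $\SO(4,1)$ violating the eigenvalue hypothesis. What rules this out in the lattice case is that $\Lambda$ acts \emph{cocompactly} on the horosphere $N$, and then one needs a genuine theorem (Bieberbach's first theorem for $N$ abelian, Auslander's generalization for $N$ nilpotent) to conclude that the holonomy in $M$ is finite. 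You invoke neither. (A minor related slip: $\Lambda$ is not a lattice in $P_\xi=MAN$, since it projects trivially to $A$; it is a cocompact lattice in $MN$.)

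The paper closes this gap by a different route. Rather than projecting to $M$, it first observes that $\Gamma\cap N$ is a lattice in the nilpotent group $N$ (this is where being a lattice in $G$, not merely geometrically finite, is used). The $M$-component of any element of $\Lambda$ then normalizes this lattice, so in the adjoint representation on $\mathfrak n$ it lies in $\GL$ of a $\ZZ$-lattice and has an integral characteristic polynomial. Since its eigenvalues all have absolute value one, Kronecker's theorem forces them to be roots of unity. Faithfulness of the $M$-action on $N$ then transfers this to the eigenvalues of the parabolic element itself. Your approach can be salvaged by citing Auslander's theorem, but as written the key step is asserted rather than proved.
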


\begin{proof}
A lattice $\Gamma$ in a rank one Lie group $G$ is a geometrically finite group (cf.\ 5.4.2 in \cite{Bowditch_geom_fin}),
hence we need to prove that the parabolic isometries contained in $\Gamma$ have only roots of unity as eigenvalues. In the case that $\Gamma$ is arithmetic there is a quick argument: for any $\gamma\in\Gamma$, the eigenvalues of $\gamma$ are algebraic numbers. If in addition $\gamma$ is parabolic, then all its eigenvalues are of absolute value one as well as their conjugates (because the group defining $\Gamma$ is compact at other infinite places). A theorem of Kronecker \cite[Theorem 1.31]{Everest_Ward} shows that any algebraic integer in $\CC$ whose Galois conjugates are all of absolute value one must be a root of unity, and it follows that the eigenvalues of $\gamma$ are roots of unity. 

One can also use a more direct geometric argument to prove this in full generality. Let $P=MAN$ be a parabolic subgroup of $G$ 
which contains a parabolic element of $\Gamma$; then it is well-known that $\Gamma\cap P$ is contained in $MN$
(see the proof of Lemma \ref{nofun} above). Also $\Lambda = \Gamma\cap N$ is a lattice in $N$, in particular $\Lambda\bs N$ is compact
(this follows from the Margulis Lemma \cite[Proposition 3.5.1]{Bowditch_geom_fin},
which implies that horosphere quotients inject into $\Gamma\bs X$, and the finiteness
of the volume of $\Gamma\bs X$). Corollary \ref{lattice_diff} will then follow from the next lemma. 

\begin{lemma}
Let $N$ be a simply connected nilpotent Lie group containing a lattice $\Lambda$, 
and $Q\le{\rm Aut}(N)$ a subgroup which preserves $\Lambda$, 
all of whose elements have only eigenvalues of absolute value one 
(in the representation on the Lie algebra $\mathfrak{n}$). Then these eigenvalues are in fact roots of unity.
\end{lemma}

\begin{proof}
The exponential map $\exp: \mathfrak{n}\to N$ is a diffeomorphism. 
By \cite[Theorem 2.12]{Raghunathan1972}, there is a lattice $L$ in the vector space $\mathfrak{n}$ such that $\langle\exp(L)\rangle = \Lambda$.
It follows that the adjoint action of $Q$ preserves $L$, hence for any $q\in Q$ the characteristic polynomial of $\ad(q)$ has integer coefficients,
hence its eigenvalues are the conjugates of some finite set of algebraic integers. Since they are also all of absolute value one it follows from Kronecker's theorem that they must be roots of unity.
\end{proof}

It follows that, in the above setting, the image of $\Gamma\cap P$ in $M$ has a finite-order image in ${\rm Aut}(N)$ where $M$ acts by conjugation. This action is faithful (because an element of $M$ cannot act trivially of an horosphere associated to $N$, otherwise it would act trivially on the whole of $X$ since it preserves these horospheres) and it follows that the hypothesis on eigenvalues in Theorem \ref{rank1} is satisfied by $\Gamma$. 
\end{proof}


\subsubsection{Action on the boundary}

\begin{theorem}
Let $\Gamma,G$ be as in the statement of Theorem \ref{rank1}.
Then there is a finite-index $\Gamma'\subset\Gamma$ such 
that for any parabolic fixed point $\xi\in\pl X$ for $\Gamma'$ with stabilizer $\Lambda_\xi$ in $\Gamma'$ the action of $\Gamma'$ on $\Gamma'/\Lambda_\xi$ is diffuse.
\label{real_bd_diff}
\end{theorem}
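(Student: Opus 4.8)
The plan is to realise $Y:=\Gamma'/\Lambda_\xi$ as the orbit of $\xi$ in $\partial X$, equip it with a distance coming from a packing of horoballs, and run a diameter argument in the spirit of Lemma \ref{dist_move_crit} — but refined to account for the elements fixing $\xi$, which do not move the relevant distances and so must be controlled using the diffuseness of $\Lambda_\xi$ itself. The two geometric inputs are a horoball version of the bisector separation of Lemma \ref{minsep} for axial elements and of Proposition \ref{parsep} for unipotent ones.

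First I would take $\Gamma'$ to be the finite-index subgroup produced in the proof of Theorem \ref{rank1}, so that every semisimple $\gamma\in\Gamma'$ has $\min(\gamma)>C(\delta_X,1)$ and (by Lemma \ref{nofun}) every parabolic element of $\Gamma'$ is unipotent; in particular $\Lambda_\xi$ and all its $\Gamma'$-conjugates are finitely generated (by \cite[Proposition~4.1]{Bowditch_geom_fin}), torsion-free and nilpotent, hence strongly polycyclic, hence diffuse. By geometric finiteness I may further fix, for each cusp of $\Gamma'$, a precisely invariant horoball small enough that the resulting $\Gamma'$-equivariant family $\{H_a\}$ of horoballs at parabolic points is pairwise disjoint. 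Identifying $Y$ with the orbit $\Gamma'\xi$ (legitimate since $\stab_{\Gamma'}(\xi)=\Lambda_\xi$), hence with $\{H_a:a\in Y\}$, I set $d(a,b):=d_X(H_a,H_b)\in(0,\infty)$ for $a\neq b$; this is symmetric and $\Gamma'$-invariant.

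The technical heart is the following separation statement: for $g\in\Gamma'\setminus\{1\}$ and $a,b\in Y$ with $\bar a\neq\bar b$ and $g\bar a\neq\bar a$, the function $k\mapsto e^{d_X(g^kH_a,H_b)}$ is strictly convex on $\ZZ$. In the projective model one has $e^{d_X(g^kH_a,H_b)}=c_0\,|\langle u_a,g^{-k}u_b\rangle|_{A/\RR}$ for a positive constant $c_0$ (coming from the fixed normalisation of the equivariant family) and null vectors $u_a,u_b$ representing $\bar a,\bar b$. When $g$ is unipotent this is a polynomial in $k$; it is weakly convex because it is a pointwise limit of the strictly convex functions of Proposition \ref{parsep} (on null vectors the inequality of Lemma \ref{trace_neg} merely degenerates to $\le 0$), and it is non-constant precisely because $g\bar a\neq\bar a$, hence it is strictly convex; the complex and quaternionic cases follow the two-case analysis of Proposition \ref{parsep} verbatim. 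When $g$ is axial, the polynomial argument is unavailable, and instead one adapts the tree-approximation argument of Lemma \ref{minsep} — using $\min(g)>C(\delta_X,1)$ — to obtain the sharper $\max\bigl(d_X(gH_a,H_b),d_X(g^{-1}H_a,H_b)\bigr)\geq d_X(H_a,H_b)+1$. I expect the main obstacle to be exactly this: checking strict convexity at the level of null (boundary) vectors in the quaternionic case, and rewriting the proof of Lemma \ref{minsep} for horoballs rather than points. In all cases one deduces that if moreover $g\bar b\neq\bar b$ then $\max\bigl(d(ga,b),d(g^{-1}a,b)\bigr)>d(a,b)$.

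Granting this, diffuseness of the action on $Y$ is combinatorial. Let $A\subseteq Y$ be finite with $|A|\ge 2$, set $D=\operatorname{diam}_d(A)$, and for $b\in A$ let $A_b=\{a\in A: d(a,b)=D\}\subseteq A\setminus\{b\}$. The key claim is that if $a^*\in A_b$ and $g\in\Gamma'$ satisfies $g\bar a^*\neq\bar a^*$ and $ga^*,g^{-1}a^*\in A$, then $g\in\stab(\bar b)$: indeed otherwise $g$ is axial, or unipotent fixing a point $\neq\bar b$, and strict convexity of $k\mapsto e^{d_X(g^kH_{a^*},H_b)}$ — evaluated at $k=0$, or, in the degenerate case $g\bar a^*=\bar b$, at the unique integer where this function attains its minimum (there it equals $0$) — forces $\max\bigl(d(ga^*,b),d(g^{-1}a^*,b)\bigr)>D$, contradicting $ga^*,g^{-1}a^*\in A$. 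Since $\stab(\bar b)$ acts freely on $A_b$ (a nontrivial unipotent fixes exactly one boundary point), any $\stab(\bar b)$-orbit $O$ meeting $A_b$ yields, via the diffuseness of $\stab(\bar b)\cong O$, a point $a^*$ that is extremal in $A_b\cap O$ for the $\stab(\bar b)$-action, and such an $a^*$ is extremal in $A$: elements of $\Gamma'\setminus\stab(\bar b)$ not fixing $\bar a^*$ are excluded by the claim, while a $g\in\stab(\bar b)$ with $g\bar a^*\neq\bar a^*$ preserves $H_b$, so $ga^*\in A$ would give $ga^*\in A_b\cap O$ (and likewise $g^{-1}a^*$), whence $\stab(\bar b)$-extremality of $a^*$ finishes. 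Finally, to get two extremal points, take a diameter pair $p,q$: if $A_q$ meets two distinct $\stab(\bar q)$-orbits we get two distinct extremal points of $A$; if $A_q$ lies in one orbit with $|A_q|\ge 2$, diffuseness of $\stab(\bar q)$ gives two extremal points of $A_q$, both extremal in $A$; and if $A_q=\{p\}$, applying the same dichotomy to $A_p$ reduces to $A_p=\{q\}$, in which case $p$ and $q$ themselves are two distinct extremal points of $A$.
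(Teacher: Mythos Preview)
Your approach is sound but quite different from the paper's. The paper works with a single real-valued function on $\Gamma'/\Lambda_\xi$ rather than a distance: fixing a \emph{generic} interior point $x_0\in X$, it sets $f(\gamma\Lambda)=b_{\gamma\xi}(x_0)$ via Busemann functions and shows $\max\bigl(f(g\gamma\Lambda),f(g^{-1}\gamma\Lambda)\bigr)>f(\gamma\Lambda)$ whenever $g\gamma\Lambda\neq\gamma\Lambda$. The axial case follows by passing Lemma~\ref{minsep} to the limit along a ray to $\gamma\xi$; for unipotent $g$, Proposition~\ref{parsep} gives only $\ge$, and strictness is forced by choosing $x_0$ off the countable family of hyperplanes $E_{\zeta,\eta}=\{x:b_\zeta(\eta x)=b_\zeta(x)\}$. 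The maximum of $f$ on a finite set then gives one extremal point, and a second comes from a short trick: after translating so the first extremal point is the trivial coset, re-choose $x_0$ deep in the horoball at $\xi$ so that $f$ is uniquely minimised there, whence the maximum of $f$ is a distinct second extremal point. This is shorter than your route and, notably, never invokes diffuseness of $\Lambda_\xi$; you trade the genericity manoeuvre for a canonical horoball distance, paying with a more involved combinatorial endgame that leans on the (nilpotent, hence diffuse) stabilisers.

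One correction to your write-up: strict convexity of $k\mapsto|\langle u_a,g^{-k}u_b\rangle|_{A/\RR}$ for unipotent $g$ is \emph{false} under the hypothesis $g\bar a\neq\bar a$ alone --- if $g$ fixes $\bar b$ then $g^{-k}u_b=u_b$ and the polynomial is constant. In your application (the key claim) you are assuming $g\notin\stab(\bar b)$, so you do have $g\bar b\neq\bar b$; with that in hand, non-constancy together with $P\ge 0$ excludes the affine case and the rest of your convexity argument goes through. The gaps you flag (rewriting Lemma~\ref{minsep} for horoball-to-horoball distances, and the precise boundary formula $e^{d_X(H_a,H_b)}=c_0|\langle u_a,u_b\rangle|_{A/\RR}$) are genuine but standard.
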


\begin{proof}
We take a finite-index subgroup $\Gamma'\le\Gamma$ as in the proof of Theorem \ref{rank1} above. The key point is the following lemma. 

\begin{lemma}
There is a dense subset $S_{\Gamma'}\subset X$ such that for any $x_0\in S_{\Gamma'}$ and any parabolic fixed point $\xi$ of $\Gamma'$, if $b_\xi$ is a Busemann function at $\xi$ we have 
\begin{equation}
 \forall g\in\Gamma',\, g\not\in\Lambda_{\xi} : \max\left( b_\xi(g x_0), b_\xi(g^{-1} x_0) \right) > b_\xi(x_0). 
\label{busemann_move}
\end{equation}
\end{lemma}

\begin{proof}
Fix $\xi$ and $b_\xi$ as in the statement. By definition of a Busemann function there is a unit speed geodesic ray $\sigma: [0,\infty[ \to X$ running to $\xi$ in $X\cup\pl X$, such that for all $x\in X$ we have
$$
b_\xi(x) = \lim_{t\to+\infty} \left( d(x,\sigma(t)) - t \right).
$$
On the other hand, by construction of $\Gamma'$ (using Lemma \ref{minsep}) we know that for all axial isometries $g \in\Gamma'\setminus\{1\}$ we have 
$$
\forall t \geq 0\: \max\left(d(gx_0,\sigma(t)),d(g^{-1}x_0, \sigma(t))\right) \ge d(x_0, \sigma(t)) + 1 ; 
$$
passing to the limit we obtain \eqref{busemann_move} for all such $g$ and for any choice of $x_0$. 

Now we show that for certain generic $x_0$ the same is true for unipotent isometries. 
In any case, for any unipotent isometry $g$ of $X$, it follows from Proposition \ref{parsep} and the same argument as above that
\begin{equation}
\max(b_{\xi} (g^{-1} x_0), b_{\xi} (gx_0)) \ge b_{\xi}(x_0)
\label{unip_Bus0}
\end{equation}
for all $x_0$. We want to choose $x_0$ in order to be able to rule out equality if $g\in\Gamma'-\Lambda_\xi$.
For a given unipotent isometry $\eta$ and a $\zeta\in \pl X$ with $\eta\zeta\not= \zeta$ define
$$
E_{\zeta,\eta} = \{ x\in X \:|\: b_\zeta(\eta x) = b_\zeta(x) \}
$$
(note that this does not depend on the choice of the Busemann function $b_\zeta$). This is an embedded hyperplane in $X$, and hence (by Baire's theorem) the subset
$$
S_{\Gamma'} = X-\bigcup_{\zeta,\eta} E_{\zeta,\eta}
$$
where the union runs over all parabolic elements $\eta$ of $\Gamma'$ and all parabolic fixed points $\zeta$ of $\Gamma'$, is dense in $X$.
Moreover, by the definition of $S_{\Gamma'}$, for $x_0\in S_{\Gamma'}$ we never have $b_\xi(g x_0) = b_\xi(x_0)$ for
any unipotent $g \in\Gamma'$ with $g \xi\not=\xi$.
Thus \eqref{unip_Bus0} has to be a strict inequality.
\end{proof}

Let $\xi_0\in\pl X$ be a parabolic fixed point of $\Gamma'$ and $b_{\xi_0}$ a Busemann function at $\xi_0$. We write $\Lambda = \Lambda_{\xi_0}$.
The function $b_{\xi_0}$ is $\Lambda$-invariant and if we choose some $x_0\in X$ we may define a function $f = f_{x_0}$ on $\Gamma'/\Lambda$ by 
\begin{equation}
f(\gamma\Lambda) = b_{\xi_0}(\gamma^{-1}x_0) = b_{\gamma\xi_0}(x_0). 
\label{def_f}
\end{equation}
By the lemma this function satisfies
\begin{equation}
\forall \gamma\Lambda\in\Gamma'/\Lambda,\, \forall g\in\Gamma',\, g\not\in\gamma\Lambda\gamma^{-1} : \max\left( f(g\gamma\Lambda), f(g^{-1}\gamma\Lambda) \right) > f(\gamma\Lambda),
\label{htot}
\end{equation}
whenever $x_0 \in S_{\Gamma'}$.
Indeed, we have
$$
\max\left( f(g\gamma\Lambda), f(g^{-1}\gamma\Lambda) \right) = \max\left(b_{\gamma\xi_0}( gx_0), b_{\gamma\xi_0}( g^{-1}x_0)\right)
$$
and according to \eqref{busemann_move} the right-hand side is strictly larger than $b_{\gamma\xi_0}(x_0)=f(\gamma\Lambda)$.

The existence of a function $f$ satisfying \eqref{htot} implies that the action $\Gamma'$ on $\Gamma'/\Lambda$ is weakly diffuse, 
i.e.~every non-empty finite subset $A\subset\Gamma'/\Lambda$ has at least {\it one} extremal point. Indeed, any $a\in A$ such that $f(a)$ realizes the maximum of $f$ on $A$ is extremal in $A$. 

Using an additional trick we can actually deduce diffuseness.
Let $A\subset\Gamma'/\Lambda$ be finite with $|A|\ge 2$, and let $a$ be an extremal point. 
By shifting $A$ we can assume that $a=\Lambda$. Now let $\xi_0$ be the fixed point of $\Lambda$ and $b_{\xi_0}$ a Busemann function.
Choose
$x_0\in S_{\Gamma'}$ such that $x_0$ is (up to $\Lambda$) the only point realizing the minimum of $b_{\xi_0}$ on $\Gamma' x_0$
(this is possible by taking $x_0$ in a sufficiently small horoball at $\xi_0$, since $S_{\Gamma'}$ is dense) and define $f$ on $\Gamma'/\Lambda$ as in \eqref{def_f}. 
By construction $f$ takes it's minimal value at $a$. So let $b \in A$ be a point where $f$ takes a maximal value. By the given argument $b$ is extremal in $A$. 
On the other hand $f(b) > f(a)$ and so $b \neq a$. We conclude that $A$ has at least two extremal points.
\end{proof}


\section{Fundamental groups of three--manifolds}

\label{3_mfd}

In this section we prove Theorem \ref{thm_3manifolds}, whose statement we recall now :

\begin{theorem*}
Let $M$ be a compact three--manifold and $\Gamma = \pi_1(M)$ its fundamental group. Then there is a finite-index subgroup $\Gamma'\le\Gamma$ which is diffuse. 
\end{theorem*}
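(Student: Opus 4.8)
The plan is to use the geometric decomposition of compact $3$--manifolds and a ``virtual gluing'' principle to reduce to cases that are either already diffuse or handled by the results of the previous section. First I would reduce to the case of an orientable, irreducible (or aspherical) manifold $M$ with non-empty or empty boundary: indeed, passing to a finite cover we may assume $M$ is orientable, and by the sphere decomposition $\pi_1(M)$ is a free product of fundamental groups of irreducible summands and copies of $\ZZ$ (from $S^2\times S^1$ factors); by item (iii) of Bowditch's list (free products of diffuse groups are diffuse) and the fact that $\ZZ$ is diffuse, it suffices to produce finite-index diffuse subgroups in each irreducible piece and then use residual finiteness of $3$--manifold groups to realize a common finite cover. (Here one must be slightly careful: finite-index subgroups of the free factors must be assembled into a finite-index subgroup of the free product; this is exactly the kind of statement the announced ``virtual gluing lemma'' should provide, using residual finiteness to find a finite quotient of $\Gamma$ in which the chosen finite-index subgroups of the factors all appear.)

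Next, for $M$ irreducible and aspherical I would invoke the geometrization theorem together with the JSJ (torus) decomposition. If $M$ is a Seifert-fibered or $\sol$-manifold, its fundamental group is virtually (a central or infinite-cyclic extension of) a surface group or a polycyclic group, and virtual diffuseness follows from the diffuseness of strongly polycyclic groups and of surface groups of non-positive Euler characteristic (items (i) and (ii) of the survey section), after passing to a finite cover to kill torsion and to make the base orbifold a surface. If $M$ is hyperbolic (finite volume, possibly with cusps), then $\pi_1(M)$ is a lattice in $\PSL_2(\CC)$ and Corollary \ref{klein3} (or Corollary \ref{lattice_diff}) directly gives a finite-index diffuse subgroup; the case of a compact hyperbolic $M$ with geodesic boundary reduces to a geometrically finite Kleinian group, again covered by Corollary \ref{klein3}.

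The remaining, and genuinely new, case is when $M$ has a non-trivial JSJ decomposition along tori: $M$ is glued from Seifert pieces and hyperbolic (cusped) pieces along incompressible tori. Here the strategy is to choose, compatibly, finite covers of the pieces whose fundamental groups are diffuse \emph{and} such that the edge groups (the $\ZZ^2$'s along the gluing tori) lift, so that the covers glue to a finite cover $\wdt M$ of $M$; then $\pi_1(\wdt M)$ is built from diffuse vertex groups amalgamated over the diffuse (in fact $\ZZ^2$) edge groups, i.e.\ it is the fundamental group of a graph of diffuse groups with diffuse edge groups, and one argues it is diffuse. The key input for the vertex groups is that hyperbolic cusped pieces can be covered by manifolds with arbitrarily large systole (so Bowditch's criterion, item (iv), applies) \emph{while still controlling the behaviour on the boundary tori} --- this is precisely where Theorem \ref{real_bd_diff} (the diffuseness of the action on $\Gamma'/\Lambda_\xi$, i.e.\ on the set of cusp cross-sections) enters, and it is what lets one run the gluing argument.

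\textbf{Main obstacle.} The hard part will be the virtual gluing step for graphs of groups: producing a single finite cover in which \emph{all} pieces simultaneously become diffuse and the edge tori lift coherently, and then proving that the resulting amalgam/HNN construction over $\ZZ^2$-edge groups is diffuse. Bowditch's extension result gives diffuseness of extensions, not of amalgams, so one needs a genuinely new lemma --- presumably built from the boundary-action diffuseness of Theorem \ref{real_bd_diff} combined with an action on the Bass--Serre tree --- to handle the amalgamation. This ``virtual gluing lemma'' is the crux of Section \ref{3_mfd}.
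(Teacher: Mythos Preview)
Your overall strategy---reduce via Kneser--Milnor to irreducible pieces using a free-product lemma, then invoke Geometrization and the JSJ decomposition, and handle the non-trivial JSJ case by a virtual gluing over $\ZZ^2$ edge groups with Theorem \ref{real_bd_diff} and the Bass--Serre tree as the key inputs---matches the paper's, and you correctly locate the crux.

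The execution of the gluing step differs in one notable respect. You propose to build an explicit finite cover $\wdt M$ by gluing covers of the pieces (matching boundary tori), and then prove $\pi_1(\wdt M)$ is diffuse. The paper instead proves an abstract \emph{Gluing Lemma} (Lemma \ref{glue}) that works directly on the original graph of groups: given diffuse edge groups and a finite-index \emph{normal subcollection} $(N,Y)$ such that each $N_{t(e)}$ acts diffusely on $G_{t(e)}/i_e(G_e)$, the fundamental group is virtually diffuse. The proof passes to the quotient graph of groups $(G/N,Y)$, which is a graph of \emph{finite} groups and hence virtually free; the kernel of the quotient map then acts on the Bass--Serre tree with diffuse vertex stabilizers and diffuse edge-sets at each vertex, so it is diffuse by Bowditch's Proposition~2.2, and one concludes by the extension result. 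The normal subcollection is produced by Lemma \ref{geom_vd}, which gives for each piece and for almost every prime $p$ a normal finite-index subgroup that is diffuse, acts diffusely on $\Gamma/\Lambda$, and hits every peripheral $\Lambda\cong\ZZ^2$ in its characteristic subgroup of index $p^2$; choosing one $p$ good for all pieces makes the subcollection compatible. This sidesteps the explicit construction of $\wdt M$ and the bookkeeping of its (larger) graph of groups that your approach would require.

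Two minor points: the paper disposes of non-empty boundary at the outset by doubling $M$, so the ``geodesic boundary'' case you single out does not arise; and for the free-product reduction the paper uses a short direct argument (Lemma \ref{free_prod}) rather than residual finiteness.
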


The proof is a rather typical application of Geometrization. We begin with an algebraic result on graph products, afterwards
we use it to construct a suitable covering (cf.~\cite{Hempel}).


\subsection{Algebraic preliminaries: a gluing lemma}

Bowditch \cite{Bowditch_diff} showed that if $\Gamma$ is the fundamental group of a graph of groups such that
for any vertex group $\Gamma_i$ and adjacent edge group $\Lambda_i$, 
both the group $\Lambda_i$ and the action of $\Gamma_i$ on $\Gamma_i/\Lambda_i$ are diffuse, then $\Gamma$ is diffuse. 
In order to glue manifolds it is necessary to understand graph products of virtually diffuse groups.
For free products there is a very simple argument.

\begin{lemma}
 The free product $G= G_1 * G_2$ of two virtually diffuse groups $G_1$ and $G_2$ is again virtually diffuse.
\label{free_prod}
\end{lemma}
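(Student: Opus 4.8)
The plan is to find a common finite-index subgroup structure that lets us apply Bowditch's graph-of-groups criterion (the one recalled just above: a group acting on a graph with diffuse edge stabilizers and diffuse actions on vertex cosets is diffuse), or more simply the fact that a free product of diffuse groups is diffuse (item (iii) in the survey of Section \ref{surv_Bowditch}). First I would pick finite-index diffuse subgroups $H_i \le G_i$ for $i=1,2$. Replacing each $H_i$ by a further finite-index subgroup, I may assume $H_i$ is \emph{normal} in $G_i$ (using that $G_i/H_i$ is finite, replace $H_i$ by the core $\bigcap_{g\in G_i} gH_ig^{-1}$, which is still finite-index and still diffuse, being a subgroup of a diffuse group). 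Set $Q_i = G_i/H_i$, a finite group.

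Next I would produce a finite-index \emph{normal} subgroup $N \normal G = G_1 * G_2$ that is a free product of conjugates of the $H_i$ together with a free group, hence diffuse. The clean way: the surjection $G_1 * G_2 \to Q_1 \times Q_2$ (composing each $G_i \to Q_i$ with the inclusion into the direct product) has finite-index kernel $N$. By the Kurosh subgroup theorem, $N$, being a subgroup of the free product $G_1 * G_2$, decomposes as a free product of a free group with subgroups of conjugates of $G_1$ and of $G_2$; concretely each such intersection has the form $N \cap gG_ig^{-1} = g(G_i \cap g^{-1}Ng)g^{-1}$, and $G_i \cap g^{-1}Ng$ is the kernel of $G_i \to Q_1\times Q_2$ restricted appropriately, which lands in $H_i$ since $N$ maps to the identity in $Q_i$. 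Thus every free factor of $N$ is either infinite cyclic or isomorphic to a subgroup of some $H_i$, hence diffuse. By item (iii) of Section \ref{surv_Bowditch} (free products of diffuse groups are diffuse, extended to arbitrary — possibly infinite — free products, or iterated), $N$ is diffuse, and $N$ has finite index in $G$.

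The main obstacle, and the one point needing a little care, is the passage from ``free product of two diffuse groups is diffuse'' to ``free product of an arbitrary family (including free factors) is diffuse'': Bowditch's result as quoted is for two factors, but it is standard that it extends to any free product via the action on the associated Bass--Serre tree, whose edge stabilizers are trivial (hence diffuse) and whose vertex stabilizers are the free factors, with the action on cosets being a free action on a set and therefore diffuse. Alternatively one avoids Kurosh entirely: let $N$ act on the Bass--Serre tree $T$ of the splitting $G = G_1 * G_2$; the stabilizers in $N$ of vertices are the groups $N \cap gG_ig^{-1}$, which as noted lie inside conjugates of the diffuse groups $H_i$ and are therefore diffuse, edge stabilizers are trivial, and $N$ acts on each coset space $N/(N\cap gG_ig^{-1})$ freely; Bowditch's graph-of-groups theorem then yields directly that $N$ is diffuse. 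Either route completes the proof, with the Bass--Serre/graph-of-groups formulation being the more self-contained since it only invokes results already cited in the excerpt.
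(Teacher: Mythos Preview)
Your proof is correct but follows a different route from the paper's. The paper uses the natural map $\phi\colon G_1*G_2 \to G_1\times G_2$, whose kernel $K$ is free (this is the Cartesian subgroup, cf.\ I.~Prop.~4 in \cite{Serre-Trees}). The preimage $H = \phi^{-1}(H_1\times H_2)$ has finite index in $G$, and the short exact sequence $1 \to K\cap H \to H \to H_1\times H_2 \to 1$ exhibits $H$ as an extension of the diffuse group $H_1\times H_2$ by a free (hence diffuse) group; Bowditch's result that extensions of diffuse groups are diffuse then finishes immediately. Your approach instead maps onto the finite group $Q_1\times Q_2$ and analyses the finite-index kernel via Kurosh (or the Bass--Serre tree), showing it is a free product of conjugates of the $H_i$ together with a free group. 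Both arguments work; the paper's is shorter and bypasses Kurosh entirely, relying only on the extension property, while yours makes the free-product structure of the diffuse subgroup explicit and fits the graph-of-groups viewpoint used later in the paper. One small slip in your Bass--Serre alternative: the relevant condition is not that $N$ acts freely on $N/(N\cap gG_ig^{-1})$ (it does not---the vertex stabilizer fixes its own coset), but rather that each vertex stabilizer acts diffusely on the cosets of the adjacent (here trivial) edge stabilizers, i.e.\ on itself by left translation, which holds since $gH_ig^{-1}$ is diffuse.
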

\begin{proof}
    Let $H_i \leq_f G_i$ be a finite index diffuse subgroup.
    Consider the homomorphism $\phi: G \to G_1 \times G_2$. 
    The kernel $K$ of $\phi$ is a free group (cf.~I. Prop.~4 in \cite{Serre-Trees}).
    Let $H$ denote the inverse image of $H_1 \times H_2$ under $\phi$.
    The subgroup $H$ has finite index in $G$ and $H \cap K$ is a free group.
    We get a short exact sequence
    \begin{equation*}
        1 \longrightarrow K\cap H  \longrightarrow H \to H_1\times H_2 \longrightarrow 1.
    \end{equation*}
     From Theorem 1.2 in \cite{Bowditch_diff} we see that $H$ is diffuse.
\end{proof}
Note that the same argument shows that the free product of diffuse groups is diffuse.
In order to understand amalgamated products and HNN extensions of virtually diffuse groups one needs to 
argue more carefully.

We will use the Bass-Serre theory of graph products of groups. We shall use the notation of \cite{Serre-Trees}.
Recall that a graph of groups $(G,Y)$ is a \emph{finite} graph $Y$ with vertices $V(Y)$ and edges $E(Y)$.
Every edge $e$ has an origin $o(e)\in V(Y)$ and a terminus $t(e)\in V(Y)$.  Moreover for every edge there is an opposite edge~$\bar{e}$.
To every vertex $P \in V(Y)$ and every edge $e \in E(Y)$ there are attached groups $G_P$ and $G_e=G_{\bar{e}}$. Moreover, for every edge $e$ there is 
a monomorphism $i_e: G_e \to G_{t(e)}$ usually denoted by $a \mapsto a^e$.
To a graph of groups one attaches a fundamental group $\pi_1(G,Y)$ -- the graph product.

Let $(G,Y)$ be a graph of groups. A \emph{normal subcollection} $(N,Y)$ consists of two families $(N_P\normal G_P)_{P\in V(Y)}$ and $(N_e\normal G_e)_{e\in E(Y)}$ of normal subgroups in the
vertex and edge groups
which are \emph{compatible} in the sense that
\begin{equation*}
     i_e(N_e) = i_e(G_e) \cap N_{t(e)} \quad \text{ and } \quad N_e = N_{\bar{e}}
\end{equation*}
 for every edge $e\in E(Y)$.
We say that $(N,Y)$ is of finite index, if for every vertex $P$ the index of $N_P$ in $G_P$ is finite.

\begin{lemma}\label{lem:virtuallyFree}
 Let $(G,Y)$ be a graph of finite groups. The fundamental group $\Gamma = \pi_1(G,Y)$ is residually finite and virtually free.
\end{lemma}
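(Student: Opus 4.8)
The plan is to prove that $\Gamma = \pi_1(G,Y)$ is residually finite and virtually free by a single construction: exhibiting a finite-index normal subcollection $(N,Y)$ of $(G,Y)$ all of whose vertex and edge groups are trivial. Once we have such a subcollection, the associated subgroup $\Gamma_N \le \Gamma$ (the fundamental group of the induced graph of groups $(G/N, Y)$, which has trivial vertex and edge groups) will be the fundamental group of a graph of trivial groups, hence a free group; and it has finite index in $\Gamma$ because the index of $\Gamma_N$ in $\Gamma$ is the product $\prod_{P} [G_P : N_P]$ over the (finitely many) vertices, which is finite. Virtual freeness follows immediately. For residual finiteness, one observes that $\Gamma$ acts on the Bass--Serre tree with finite vertex stabilizers; since $\Gamma_N$ is free and of finite index it is residually finite, and a group containing a residually finite subgroup of finite index which is moreover virtually free is itself residually finite — alternatively one notes directly that a virtually free group is residually finite (indeed a finitely generated one is, being linear; and in general a free-by-finite group embeds in its profinite completion because free groups do and the property passes to finite extensions).

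First I would make precise what a normal subcollection $(N,Y)$ of finite index gives us. The compatibility conditions $i_e(N_e) = i_e(G_e) \cap N_{t(e)}$ and $N_e = N_{\bar e}$ are exactly what is needed for the quotients $G_P/N_P$ and $G_e/N_e$ to again form a graph of groups $(\bar G, Y)$ with the induced edge monomorphisms, and for the natural map $\pi_1(G,Y) \to \pi_1(\bar G, Y)$ to be a surjection whose kernel $\Gamma_N$ meets each vertex group $G_P$ (sitting inside $\Gamma$ after choosing a maximal tree) in exactly $N_P$. Standard Bass--Serre covering space theory (see \cite{Serre-Trees}) then identifies $\Gamma_N$ with the fundamental group of a graph of groups whose vertex groups are the various $N_P$ and whose edge groups are the $N_e$. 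Taking every $N_P = \{1\}$ forces every $N_e = \{1\}$ automatically (the compatibility condition is then vacuous), and $\Gamma_N$ is free on a set in bijection with $E(Y)/\sim$ minus the edges of a maximal tree.

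The real content, therefore, is to produce a finite-index normal subcollection with $N_P = \{1\}$ for all $P$; equivalently, a system of homomorphisms $\theta_P : G_P \to F_P$ to finite groups, injective on each $G_P$, which are compatible along the edge inclusions in the sense that $\theta_{t(e)} \circ i_e$ and $\theta_{o(\bar e)} \circ i_{\bar e}$ factor through a common finite quotient of $G_e$ with the matching-up condition. A clean way to arrange this: since $Y$ is finite and all the $G_P$ are finite, let $n$ be a common multiple of all the $|G_P|$ and consider, for each edge $e$, the (finite) set of cosets; more concretely, embed the whole finite data into a single large finite group. Here is the cleanest route: let $G_P$ act on itself by left translation and on each adjacent coset space $G_P / i_e(G_e)$; take $F$ to be the finite symmetric group on the disjoint union, over all vertices and cosets, of these finite sets, and map each $G_P$ into $F$. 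One must check injectivity (clear, from the regular action) and compatibility (the edge group $G_e$ sits inside both $G_{o(e)}$ and $G_{t(e)}$, and one identifies the two images inside $F$ via the corresponding permutation of cosets). This is the step I expect to be the main obstacle: getting the permutation representations to glue consistently across all edges simultaneously, rather than edge by edge.

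If the global symmetric-group construction proves fiddly, the fallback — which I would actually prefer to write up — is an induction on the number of edges of $Y$ not in a chosen maximal tree $T$, combined with the known cases of amalgamated free products and HNN extensions. For a graph of groups over a tree, $\Gamma$ is built from the finite $G_P$ by iterated amalgamated products over finite subgroups, and a finite amalgam $A *_C B$ with $C$ finite is virtually free and residually finite by the classical Karrass--Solitar/Serre results; each HNN step (adding a non-tree edge) likewise preserves virtual freeness and residual finiteness when the edge group is finite. Assembling these gives the lemma, though it is less self-contained than the direct subcollection argument. In either approach, once $\Gamma$ is known to be virtually free, residual finiteness is automatic, so the whole proof reduces to the virtual-freeness claim, i.e.\ to producing the finite-index free subgroup, i.e.\ to the finite-index normal subcollection with trivial vertex groups.
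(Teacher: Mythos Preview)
Your main construction has the quotient direction backwards. With $N_P = \{1\}$ for every vertex $P$, the quotient graph of groups $(G/N, Y)$ has vertex groups $G_P/\{1\} = G_P$, \emph{not} trivial groups; the quotient map $q \colon \pi_1(G,Y) \to \pi_1(G/N, Y)$ is therefore the identity and its kernel is $\{1\}$, not a finite-index free subgroup. Your index formula $[\Gamma : \Gamma_N] = \prod_P [G_P : N_P]$ is likewise false in general: take $\Gamma = \ZZ/2\ZZ * \ZZ/2\ZZ$, which is infinite, while the product over its two vertex groups is $4$. More fundamentally, a normal subcollection only records \emph{local} data---a normal subgroup of each $G_P$---and does not by itself manufacture a normal subgroup of $\Gamma$ with those prescribed intersections with the $G_P$; the existence of such a global subgroup is exactly what has to be proved.

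What you actually want is a finite-index normal subgroup $\Gamma' \normal \Gamma$ with $\Gamma' \cap G_P = \{1\}$ for every $P$, so that $\Gamma'$ acts freely on the Bass--Serre tree and is therefore free. The paper obtains this in the opposite order from yours: it first deduces residual finiteness of $\Gamma$ from Hempel's criterion \cite{Hempel}, whose hypothesis asks for arbitrarily small finite-index normal subcollections. Since the $G_P$ are finite, the trivial subcollection $N_P = \{1\}$ already has finite index and is as small as possible, so Hempel applies---that is the only role the trivial subcollection plays. \emph{Then}, using residual finiteness, one separates the finitely many non-identity elements of $\bigcup_P G_P$ from $1$ in a single finite quotient; the kernel is the desired finite-index free normal subgroup.

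Your permutation-representation paragraph is groping toward a direct construction of such a finite quotient, bypassing Hempel, but you correctly identify the global compatibility across edges as the obstacle and do not resolve it. The Karrass--Solitar fallback would also succeed, but it is a different argument from the paper's and you do not carry it out either.
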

\begin{proof}
 The residual finiteness follows from Theorem 3.1 of Hempel \cite{Hempel}. 
 To apply his result we need to specify sufficiently small normal subcollections $(H,Y)$ in $(G,Y)$ such for every $P\in V(Y)$ the group $H_P$ has finite index in $G_P$.
 Since we are dealing with finite groups it is easy to check that we can simply choose $H_P =\{1\}$ and $H_e=\{1\}$ for every vertex $P$ and edge $e$.

 Using that $\Gamma$ is residually finite, we can find a finite index subgroup $N \normal \Gamma$ which intersects the embedded vertex group $G_P$ trivially for any of the finitely many vertices
 $P\in V(Y)$. Therefore, the subgroup $N$ acts freely (without edge inversion) on the Bass-Serre tree associated with the graph $(G,Y)$. We deduce that $N$ is a free group \cite[I.~Thm.~4]{Serre-Trees}.
\end{proof}

Let $(G,Y)$ be a graph of groups and let $(N,Y)$ be a normal subcollection. To such a data we can associate a quotient graph of groups $(H,Y)$ where
$H_P = G_P/N_P$ and $H_e = G_e/N_e$ for all verticies $P$ and edges $e$.
There is a natural surjective quotient morphism $q: \pi_1(G,Y) \to \pi_1(H,Y)$. 
We are now able to state and prove the (algebraic) gluing Lemma.

\begin{lemma}[Gluing Lemma]
 Let $(G,Y)$ be a graph of groups such that 
   \begin{enumerate}[(i)]
    \item\label{assump:edgeDiff}   every edge group $G_e$ is diffuse
    \item\label{assump:virtDiff}  there is a normal subcollection $(N,Y)$ of finite index such that for every edge
           $e \in E(Y)$ the group $N_{t(e)}$ acts diffusely on $G_{t(e)}/i_e(G_e)$.
   \end{enumerate}
In this case the fundamental group $\Gamma = \pi_1(G,Y)$ is virtually diffuse.
\label{glue}
\end{lemma}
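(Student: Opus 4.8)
The plan is to combine the two hypotheses with Bass--Serre theory in order to build a finite-index subgroup of $\Gamma = \pi_1(G,Y)$ which is the fundamental group of a graph of groups satisfying Bowditch's criterion (the one recalled at the start of Section~\ref{3_mfd}): every edge group is diffuse and every vertex group acts diffusely on the quotient by each adjacent edge group. The normal subcollection $(N,Y)$ gives a quotient graph of groups $(H,Y)$ with $H_P = G_P/N_P$ finite (by the finite-index assumption) and $H_e = G_e/N_e$ finite, together with the quotient morphism $q\colon \pi_1(G,Y)\to\pi_1(H,Y)$. By Lemma~\ref{lem:virtuallyFree} the group $\pi_1(H,Y)$ is residually finite and virtually free; pick a finite-index normal subgroup $\Delta\normal\pi_1(H,Y)$ which is free, and set $\Gamma' = q^{-1}(\Delta)$, a finite-index subgroup of $\Gamma$.

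The heart of the argument is to understand $\Gamma'$ as a graph of groups. The subgroup $\Gamma'$ acts on the Bass--Serre tree $T$ of $(G,Y)$, so $\Gamma'\bs T$ is again a graph and $\Gamma'$ is the fundamental group of the induced graph of groups, whose vertex groups are the $\Gamma'$-stabilizers of vertices of $T$ and whose edge groups are the $\Gamma'$-stabilizers of edges. A vertex stabilizer in $\Gamma$ is a conjugate $gG_Pg^{-1}$, and since $q$ restricted to $G_P$ is the projection onto $H_P$ with kernel $N_P$, the intersection $\Gamma'\cap gG_Pg^{-1}$ is, up to conjugacy, the preimage in $G_P$ of $\Delta\cap H_P$; because $\Delta$ is free it meets the finite group $H_P$ trivially, so this intersection is exactly $N_P$ (up to conjugation). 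Similarly the edge stabilizers are conjugates of the $N_e$. Thus $\Gamma'$ is the fundamental group of a graph of groups whose vertex groups are conjugates of the $N_P$ and edge groups are conjugates of the $N_e$, and the inclusion of an edge group into an incident vertex group is (conjugate to) the original inclusion $i_e$ restricted to $N_e$, whose image is $i_e(G_e)\cap N_{t(e)} = i_e(N_e)$ by the compatibility condition.

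It remains to check Bowditch's two conditions for this graph of groups. First, each edge group is a conjugate of $N_e$; since $N_e$ is a subgroup of the diffuse group $G_e$ (hypothesis~\eqref{assump:edgeDiff}), it is diffuse. Second, for an edge $e$ with terminus $P = t(e)$, the relevant vertex group is (a conjugate of) $N_P = N_{t(e)}$, and we must see that it acts diffusely on the coset space $N_{t(e)}/i_e(N_e)$. But the map $N_{t(e)}/i_e(N_e)\to G_{t(e)}/i_e(G_e)$ is an injection of $N_{t(e)}$-sets (two elements of $N_{t(e)}$ lie in the same $i_e(G_e)$-coset iff they lie in the same $i_e(N_e)$-coset, again by $i_e(N_e) = i_e(G_e)\cap N_{t(e)}$), and $N_{t(e)}$ acts diffusely on $G_{t(e)}/i_e(G_e)$ by hypothesis~\eqref{assump:virtDiff}; a subaction on an invariant subset of a diffuse action is diffuse, since extremal points of the ambient finite set which happen to lie in the subset remain extremal there — more precisely one restricts a finite $A\subset N_{t(e)}/i_e(N_e)$, views it inside $G_{t(e)}/i_e(G_e)$, and takes two of its extremal points. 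Hence Bowditch's criterion applies and $\Gamma'$ is diffuse, so $\Gamma$ is virtually diffuse. The main obstacle I expect is the bookkeeping in the second paragraph: correctly identifying the vertex and edge groups of the induced graph of groups on $\Gamma'\bs T$ and verifying that the edge inclusions are the expected restrictions of the $i_e$, which is where the compatibility condition $i_e(N_e) = i_e(G_e)\cap N_{t(e)}$ is essential.
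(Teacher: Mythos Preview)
Your argument is correct, but it is packaged differently from the paper's. The paper does not pass to a finite-index subgroup and re-express it as a graph of groups. Instead it shows directly that the kernel $\frN=\ker q$ is diffuse: the $\frN$-stabilizer of a vertex $v$ of the Bass--Serre tree $T$ lying over $P$ is $N_P$, and its action on the adjacent edge set is the $N_P$-action on $G_P/i_e(G_e)$, which is diffuse by hypothesis~\eqref{assump:virtDiff}; by Bowditch's Proposition~2.2 this makes $E(T)$ a diffuse $\frN$-set, and since edge stabilizers are diffuse (being subgroups of the $G_e$) the group $\frN$ itself is diffuse. Then the short exact sequence $1\to\frN\to\Gamma\to\pi_1(H,Y)\to 1$ together with virtual freeness of $\pi_1(H,Y)$ finishes the proof. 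Your route trades this direct tree argument for the bookkeeping of the induced graph-of-groups structure on $\Gamma'\backslash T$; this is heavier (as you note, one has to track that the various edges over $e$ correspond to distinct $N_P$-orbits on $G_P/i_e(G_e)$, with edge groups $g\,i_e(N_e)\,g^{-1}$ for representatives $g\in G_P$), but it yields the same conclusion and has the virtue of producing an explicit finite graph of groups for the diffuse subgroup. The paper's approach is shorter because it uses hypothesis~\eqref{assump:virtDiff} exactly as stated, on $G_{t(e)}/i_e(G_e)$, without ever needing the reduction to $N_{t(e)}/i_e(N_e)$.
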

\begin{proof}
   Consider the associated quotient morphism $q: \Gamma \to \pi_1(H,Y)$.
   The kernel $\frN$ of $q$ is the normal subgroup generated by the groups $(N_P)_{P\in V(Y)}$.
   Let $\Gamma$ and $\frN$ act on the Bass-Serre tree $T$ associated with $(G,Y)$. The stabilizer in $\Gamma$ (resp.~$\frN$) of a vertex $v \in V(T)$ above $P \in V(Y)$ is isomorphic to $G_P$ (resp.~$N_P$).
   It acts on the set of adjacent edges $E(v) \subset E(T)$. 
   Pick an edge $e \in E(Y)$ with $t(e)=P$. As a set with $G_P$ action $E(v)$ is isomorphic to $G_P/i_e(G_e)$.
   By assumption \eqref{assump:virtDiff} the action of $N_P$ on $G_P/i_e(G_e)$ is diffuse. By a result of Bowditch \cite[Prop.~2.2]{Bowditch_diff} we deduce that $E(T)$ is a diffuse $\frN$
   set. Since the edge groups are assumed to be diffuse, we see that $\frN$ is diffuse.

   The quotient $(H,Y)$ is a graph of finite groups, we know from Lemma \ref{lem:virtuallyFree} that it is virtually free.
   Since free groups are diffuse, the short exact sequence
    \begin{equation*}
        1 \longrightarrow \frN \longrightarrow \Gamma \longrightarrow \pi_1(H,Y) \longrightarrow 1
    \end{equation*}
    implies the assertion by Thm.~1.2 (2) of \cite{Bowditch_diff}.
\end{proof}


\subsection{Geometrization and the proof of Theorem \ref{thm_3manifolds}}

\subsubsection{Definitions}
\label{geom_conj}

We recall here the definitions which allow to state the Geometrization Theorem
which was conjectured by W.~Thurston (\cite{Thurston_geom}, see also \cite{Scott}) and proven by
G.~Perelman \cite{Perelman1, Perelman2} (see also \cite{KL_P} for a complete account of Perelman's proof).

In the following we consider (without loss of generality) only orientable manifolds.
A three--manifold $M$ is called \emph{irreducible} if all embedded 2-spheres in $M$ bound a ball.
A manifold is \emph{prime} if it is irreducible or homeomorphic to $S^1 \times S^2$.  
According to the Kneser--Milnor decomposition every closed three--manifold 
is a finite connected sum of prime manifolds.
A closed irreducible manifold $M$ has a further topological decomposition, called the Jaco--Shalen--Johansson
decomposition, which consists in a canonical collection of embedded, essentially disjoint 2-tori
in $M$ (see \cite{Jaco_book}).
The Geometrization Theorem states that every connected component of the complement in $M$
of this collection of tori is either a finite volume hyperbolic manifold or Seifert fibered.


\subsubsection{Virtual diffuseness}
The following lemma treats the pieces of the Geometrization Theorem. It is the key ingredient for Theorem \ref{res_dim3}.
\begin{lemma}
Let $M$ be a compact three--manifold with incompressible toric boundary. 
If $M$ is either hyperbolic of finite volume or Seifert fibered, then $\Gamma=\pi_1(M)$ contains a diffuse subgroup $\Gamma'$ of finite index.
Moreover, if $M$ has non-empty boundary, then for almost all primes $p$ the group $\Gamma'$ can be chosen so that
for any peripheral subgroup $\Lambda$ of $\Gamma$ 
 \begin{enumerate}[ (a)]
  \item the $\Gamma'$-action on $\Gamma/\Lambda$ is diffuse and
  \item  $\Gamma'\cap\Lambda$ is the characteristic subgroup of index $p^2$.
 \end{enumerate}
\label{geom_vd}
\end{lemma}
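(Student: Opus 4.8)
The plan is to treat the two geometric cases separately, in each case producing a finite-index subgroup $\Gamma'$ that is diffuse and, when $\partial M\neq\emptyset$, controlling its intersection with the peripheral tori. For the \emph{hyperbolic} case, $\Gamma=\pi_1(M)$ is a non-uniform (if $\partial M\neq\emptyset$) or uniform lattice in $\SL_2(\CC)$, so Theorem~\ref{rank1} and Theorem~\ref{real_bd_diff} apply directly: the parabolic elements are unipotent in $\mathrm{PSL}_2(\CC)$ (as noted in the proof of Corollary~\ref{klein3}), so there is a finite-index $\Gamma'$ acting diffusely on $X=\HH^3$ and also acting diffusely on each $\Gamma'/\Lambda_\xi$ for $\xi$ a parabolic fixed point. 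The peripheral subgroups $\Lambda$ of $\Gamma$ are exactly the stabilizers of the cusps, i.e.\ (conjugates of) the $\Lambda_\xi$; so condition (a) is Theorem~\ref{real_bd_diff} applied to $\Gamma$ rather than $\Gamma'$, which one gets by restricting the action and noting $\Gamma/\Lambda$ contains $\Gamma'/\Lambda'$ with the requisite properties, or more cleanly by re-running the argument with $\Gamma'$ chosen deeper. For condition (b): each peripheral $\Lambda\cong\ZZ^2$, its characteristic subgroup of index $p^2$ is $p\Lambda$, and for all but finitely many $p$ one can arrange (by passing to a further congruence-type finite-index subgroup, using residual finiteness and that there are finitely many cusps) that $\Gamma'\cap\Lambda=p\Lambda$ while keeping diffuseness—since shrinking $\Gamma'$ further only helps with the metric criterion of Lemma~\ref{dist_move_crit}.

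For the \emph{Seifert fibered} case, $M$ carries a Seifert fibration with base a 2-orbifold $\mathcal{O}$; after passing to a finite-index subgroup we may assume $\mathcal O$ is a surface (no cone points) and the fibration is a circle bundle, so $\Gamma'$ fits in a central extension $1\to\ZZ\to\Gamma'\to\pi_1(\Sigma)\to1$ with $\Sigma$ a compact surface of nonpositive Euler characteristic (the spherical and Nil cases being excluded or handled by hand since $\partial M$ is toric and incompressible, which forces $\widetilde{\mathrm{SL}_2}$, $\HH^2\times\RR$, or Euclidean geometry on the non-exceptional pieces). The surface group $\pi_1(\Sigma)$ is diffuse by item (i) of Bowditch's list in \S\ref{surv_Bowditch}, and an extension of diffuse groups is diffuse, so $\Gamma'$ is diffuse. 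When $\partial M\neq\emptyset$ the peripheral torus subgroups $\Lambda\cong\ZZ^2$ are generated by the fiber class and a boundary curve of $\Sigma$; here I would verify (a) and (b) directly: $\Gamma'/\Lambda$ is, as a $\Gamma'$-set, built from $\pi_1(\Sigma)/\pi_1(\partial\Sigma)$ (a $\pi_1(\Sigma)$-set which is diffuse by item (ii), free actions on $\RR$-trees, applied to the action on the Bass–Serre / universal-cover tree of $\Sigma$ relative to a boundary curve) together with the central $\ZZ$, and Bowditch's extension result for diffuse actions (Prop.~2.2 in \cite{Bowditch_diff}) gives diffuseness of the action; for (b) one again passes to a deeper finite-index subgroup realizing $\Gamma'\cap\Lambda=p\Lambda$, which is possible for almost all $p$ since $\Gamma'$ is residually finite and there are finitely many boundary components.

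The main obstacle I anticipate is condition (b): arranging simultaneously, for \emph{all} peripheral subgroups $\Lambda$, that $\Gamma'\cap\Lambda$ is precisely the characteristic index-$p^2$ subgroup $p\Lambda$, while not destroying diffuseness of $\Gamma'$ or of the actions on the coset spaces. The point is that one cannot just intersect with the kernel of $\Gamma\to H_1(M;\ZZ/p)$ (this need not cut each $\Lambda$ down to exactly $p\Lambda$), so one needs a more careful choice—for instance, first pass to a diffuse $\Gamma_0\le\Gamma$, then inside $\Gamma_0$ take the kernel of a suitable map to a finite $p$-group that restricts to reduction mod $p$ on each peripheral $\ZZ^2$; this is possible for all primes $p$ avoiding the finite set of ``bad'' primes dividing relevant torsion/indices, using that peripheral subgroups are separable in $3$-manifold groups. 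One also must check that passing to this deeper subgroup preserves the diffuse action on coset spaces: this is where one uses that the criteria (Lemma~\ref{dist_move_crit}, Theorem~\ref{real_bd_diff}, and the $\RR$-tree/extension arguments) are all inherited by finite-index subgroups. The remaining work—reducing the Seifert case to a circle bundle over a good surface, and unwinding that toric incompressible boundary rules out the problematic geometries—is routine $3$-manifold topology.
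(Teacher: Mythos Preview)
Your outline is correct and follows essentially the same route as the paper. The specific tool you are reaching for in the hyperbolic-with-boundary case is Lemma~4.1 of \cite{Hempel}, which for almost all primes $p$ produces in one step a finite-index normal subgroup avoiding any prescribed finite set of elements (here the finitely many axial elements of translation length $<2\log(1+\sqrt 2)$, via Lemma~\ref{fin_geod}) while meeting each peripheral $\ZZ^2$ in exactly its characteristic index-$p^2$ subgroup; in the Seifert case the paper does not first pass to a manifold base but works with the orbifold quotient $G=\pi_1^{\mathrm{orb}}(B)$ directly, constructing a free normal $G_p\le G$ with $G_p\cap\langle d_i\rangle=\langle d_i^p\rangle$ by gluing $p$-cone discs into $\partial B$ and taking a manifold cover, then setting $\Gamma'_p=p\ZZ\times G_p$---this avoids the bookkeeping you would otherwise need to ensure that passing to a surface base does not already shrink the peripheral subgroups.
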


\begin{proof}
Assume first that $M$ is closed. If $M$ is Seifert fibered, then $\pi_1(M)$ is a an extension of a group which is virtually a surface group by a cyclic group $C$ (cf.\ Lemma 3.2 in \cite{Scott}).
If $C$ is infinite, such a group is virtually diffuse by the results of Bowditch \cite{Bowditch_diff}.
Otherwise $M$ is covered by $S^3$ and the fundamental group is finite.
If $M$ is hyperbolic, then the virtual diffuseness follows from Theorem \ref{rank1_lattice:intro}.

Now we turn to the case where $M$ has non-empty boundary.
Assume first that $M$ is hyperbolic. In $\pi_1(M)$ there are only finitely many hyperbolic elements $h_1, \dots, h_m$
with translation length less than $2\log(1+\sqrt{2})$ (cf.\ Lemma~\ref{fin_geod}). 
By Lemma 4.1 of \cite{Hempel} we can find, for almost all primes $p$, a normal subgroup of finite index $\Gamma'_p \leq \pi_1(M)$ which does not contain $h_1,\dots,h_m$ and
which intersects the peripheral subgroup in the characteristic subgroup of index~$p^2$.
Using Theorem \ref{real_bd_diff} such a group $\Gamma'_p$ is diffuse and acts diffusely on $\Gamma/\Lambda$ for any peripheral $\Lambda$.

Finally assume that $M$ is Seifert fibered.
There is a short exact sequence
\begin{equation*}
  1 \longrightarrow \ZZ \longrightarrow \pi_1(M) \stackrel{q}{\longrightarrow} G \longrightarrow 1
\end{equation*}
where $\ZZ$ is generated by the regular fibers and $G$ is the fundamental group of a two dimensional orbifold $B$ with non-empty boundary.
Taking the finite index subgroup of elements commuting with the regular fibres (which contains the peripheral subgroups), we can assume that the extension is central.
Since the boundary of $M$ is incompressible, the simple closed boundary curves $d_1, \dots d_k$ of $B$ have infinite order in $G$.
For almost all primes $p$ there is a free normal subgroup $G_p \subset G$ of finite index such that $G_p \cap \langle d_i \rangle = \langle d_i^p \rangle$.
One way to see this is to argue using the presentation of $G$ as given in \cite[12.1]{Hempel_Book}.
Geometrically this can be seen as follows: Glue a disc with a $p$-cone point into every boundary curve of $B$. For almost all $p$ the resulting orbifold $B_p$ is good and has hence
a finite sheeted regular cover $\widetilde{B_p}$ which is a manifold. 
Removing the inverse images of the glued discs we obtain a finite covering space $S_p$ of $B$ which is a compact surface so that 
the boundary components are $p$-fold covers of the boundary components of $B$. Since a compact surface with non-empty boundary has a free fundamental group the claim follows.

The finite sheeted cover $\widetilde{M}_p$ corresponding to $q^{-1}(G_p)$ has fundamental group isomorphic to $\ZZ \times G_p$. Finally the group $\Gamma'_p = p\ZZ \times G_p$ is diffuse and
intersects the peripheral subgroups in their characteristic subgroups of order $p^2$.
It remains to verify that 
that the action of $\Gamma'_p$ on $\Gamma/\Lambda$ is diffuse.
This action factors through the group $G_p$ and so the assertion follows
from Theorem \ref{real_bd_diff} if we embed $G_p$ as a discrete subgroup into $\SL_2(\RR)$.

There is another argument for the diffuseness of this action: We can assume that the surface $S_p$
has more than one boundary component, and so the boundary curves can be choosen to be part of a free generating set.
Let $F$ be a free group and $f \in F$ an element of a free generating set, then by Prop.~2.2 in \cite{Bowditch_diff} 
the action of $F$ on $F/\langle f \rangle$ is diffuse.
\end{proof}

\subsubsection{Proof of Theorem \ref{thm_3manifolds}}
Let $M$ be a compact three--manifold; by doubling it (and since virtual diffuseness passes to subgroups)
we may assume that it is in fact closed.
By Lemma \ref{free_prod} and the Kneser--Milnor decomposition we may assume that $M$ is irreducible.
An irreducible manifold admits a geometric decomposition (see \ref{geom_conj}), which yields a decomposition of $\pi_1(M)$
as a graph of groups whose vertex groups are fundamental groups of Seifert fibered or hyperbolic manifolds and the edge groups are peripheral subgroups.
Choosing a prime number $p$ which is admissible for all the occuring pieces, it follows from Lemma \ref{geom_vd} that
this graph of groups has a normal subcollection which satisfies the hypotheses of Lemma \ref{glue}. 


\subsection{Three--dimensional infra-solvmanifolds}
\label{3d_infrasol}

A three--dimensional {\it solvmanifold} is a (left) quotient of the solvable Lie group 
$$
\sol = \RR^2 \rtimes \RR ;\quad t\cdot x = \begin{pmatrix} e^t & \\ & e^{-t}\end{pmatrix}\cdot x
$$ 
by a discrete subgroup; an {\it infra-solvmanifold} is a quotient of such by a finite group acting freely.
Any left-invariant Riemannian metric on $\sol$ induces a complete Riemannian metric on an infra-solvmanifold.
A compact solvmanifold is finitely covered
by a torus bundle (see for example \cite[Theorem 5.3 (i)]{Scott}),
hence its fundamental group contains a subgroup of finite index which is an extension of $\ZZ^2$ by $\ZZ$.
More precisely, this group will be isomorphic to some
$$
\Gamma_A = \langle \ZZ^2,t \:|\: \forall v\in\ZZ^2, tvt^{-1} = Av \rangle
$$  
where $A\in\SL_2(\ZZ)$ is not unipotent. Such a group is diffuse by \cite[Thm 1.2]{Bowditch_diff}.
On the other hand we will now explain how to construct infra-solvmanifolds (so-called `torus semi-bundles')
of dimension three with zero first Betti number (by gluing I-bundles over Klein bottles, see \cite{Hatcher_3D}),
which are then not locally indicable and hence not diffuse. 

The following result is a special case of Proposition \ref{prop_infrasolcom}. We shall give another geometric argument
(see also \cite[Corollary 8.3]{HaLee} for a complete description of the groups of isometries acting properly discontinously,
freely and cocompactly on $\sol$ from which it follows easily). 

\begin{prop}
In every commensurability class of compact three--dimensional infra-solvmanifolds there is a manifold with non-diffuse fundamental group.
\label{Sol}
\end{prop}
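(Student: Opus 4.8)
The plan is to construct, inside an arbitrary commensurability class of compact three-dimensional infra-solvmanifolds, a concrete quotient whose fundamental group has vanishing first rational Betti number; by Proposition \ref{prop_diffusePolycyclic} (or Theorem \ref{thm_LinnellWitteMorris}) such a group is not diffuse. The starting point is the structure recalled just above: after passing to a finite cover every such manifold is a torus bundle with monodromy some hyperbolic (non-unipotent) $A\in\SL_2(\ZZ)$, so the commensurability class is determined by the group $\Gamma_A = \ZZ^2\rtimes_A\ZZ$, and it suffices to exhibit \emph{one} infra-solvmanifold in this class with the desired property. Equivalently, I want a torsion-free subgroup $\Lambda\subset\aff(\sol)$ containing a finite-index copy of (a power of) $\Gamma_A$, with $b_1(\Lambda)=0$.

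First I would write down the relevant automorphism. The group $\sol=\RR^2\rtimes\RR$ carries the order-two automorphism $\tau(x,t)=(W_0 x,-t)$ where $W_0=\bigl(\begin{smallmatrix}0&1\\1&0\end{smallmatrix}\bigr)$ swaps the two eigendirections of $\beta(t)=\diag(e^t,e^{-t})$; this is precisely the $n=1$ case of the computation of $\aut(G)$ carried out before Proposition \ref{prop_infrasolcom}, and indeed Proposition \ref{Sol} is stated there as a special case of Proposition \ref{prop_infrasolcom}(b). The geometric argument then runs parallel to the proof of that proposition: choose a lattice $\Gamma_0=P\ZZ^2$ in $\RR^2$ adapted to the eigenbasis of $A$ and the fixed/anti-fixed lines of $W_0$, pass to a finite-index sublattice $L$ which has a $W_0$-eigenbasis, pick $v\in L$ with $W_0 v=v$, then replace the $\ZZ$-direction by $r\ZZ$ with $r$ large enough that $A^r\equiv 1$ modulo the relevant index so that $\Gamma'=L\rtimes r\ZZ$ is $\tau$-stable, and finally let $\Lambda$ be generated by $\Gamma'$ and the affine element $(\tfrac12 v,0)\tau$. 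One checks that $\Lambda$ is torsion-free (the point being that $(\tfrac12 v,0)\tau$ has infinite order because its square is translation by the nonzero vector $v$), that $\Gamma'$ has index $2$ in $\Lambda$ and is a finite-index subgroup of a conjugate of $\Gamma_{A^r}$, so $\sol/\Lambda$ is a compact infra-solvmanifold in the given commensurability class.

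The remaining point, which is really the crux of the statement, is that $b_1(\Lambda)=0$. For this I would compute the abelianization directly, or equivalently the space of $\Lambda$-coinvariants in $\RR^3=\operatorname{Lie}(\sol)$. The translations $L\subset\RR^2$ together with the conjugation action of $r\ZZ$ (acting by the hyperbolic matrix $A^r$, which has no eigenvalue $1$) already kill the $\RR^2$-part: $(A^r-1)$ is invertible on $\RR^2$, so the image of $L$ in the abelianization is finite. The element $t$ contributes the $\RR$-direction, but the new generator $(\tfrac12 v,0)\tau$ acts on this $\RR$-direction by $-1$ (that is the "$-t$" in $\tau$), so after abelianizing, twice the class of $t$ is killed as well. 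Hence $\Lambda^{\mathrm{ab}}$ is finite and $b_1(\Lambda;\QQ)=0$. By Proposition \ref{prop_diffusePolycyclic} the group $\Lambda$, being a torsion-free virtually polycyclic group that is not locally indicable, fails to be strongly polycyclic and therefore is not diffuse; thus $\sol/\Lambda$ is a compact three-dimensional infra-solvmanifold with non-diffuse fundamental group in the prescribed commensurability class.

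The main obstacle I anticipate is purely bookkeeping rather than conceptual: arranging simultaneously that $L$ admits a $W_0$-eigenbasis, that $v\in L$ (not merely in $\RR^2$), that $r$ is chosen so the group $\Lambda$ is genuinely torsion-free, and that the resulting $\Lambda$ still lies in the original commensurability class of $\Gamma_A$ — all of this is exactly the content of the delicate congruence choices ($A^r\equiv 1\bmod 4q$ etc.) in the proof of Proposition \ref{prop_infrasolcom}(b), specialized to $n=1$. Since the statement is explicitly flagged as that special case, the honest approach is to invoke Proposition \ref{prop_infrasolcom} and then sketch the more hands-on geometric construction (the $I$-bundle-over-Klein-bottle / torus semi-bundle picture of \cite{Hatcher_3D}) as the promised "another geometric argument", verifying the Betti number vanishing on the explicit presentation.
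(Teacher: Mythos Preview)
Your algebraic argument---specializing the construction in the proof of Proposition~\ref{prop_infrasolcom}(b) to $n=1$ and then checking $b_1(\Lambda)=0$ directly---is correct, and you are right that the paper flags Proposition~\ref{Sol} as the $n=1$ case of that result. However, the paper does \emph{not} revisit the algebraic/Lie-theoretic construction at all; the promised ``another geometric argument'' is carried out entirely on the topological side, and it is rather different from what you wrote.

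Concretely, the paper takes $N$ to be the twisted $I$-bundle over the Klein bottle (so $\partial N=\TT^2$) and forms the torus semi-bundle $M=N\cup_\phi N$ for a gluing map $\phi$ with mapping class $B\in\SL_2(\ZZ)$. Such an $M$ has $b_1(M)=0$ (or is Seifert), and in the sol case it is doubly covered by the torus bundle with monodromy $A=SB^{-1}SB$, where $S(x,y)=(-x,y)$. A direct computation shows that the matrices arising this way are exactly those of the form $\bigl(\begin{smallmatrix} b & 2a \\ 2c & b\end{smallmatrix}\bigr)$. The remaining point---that every commensurability class is reached---is handled by an $\SL_2(\ZZ)$ argument rather than by the congruence conditions you set up: every hyperbolic conjugacy class contains a matrix with equal diagonal entries (conjugate so that a geodesic orthogonal to the axis goes to $(0,\infty)$), and squaring or cubing makes the off-diagonal entries even.

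So both routes work. Yours inherits the bookkeeping of Proposition~\ref{prop_infrasolcom} (eigenbases for $W_0$, the index $q$, the congruence $A^r\equiv 1\bmod 4q$) and then verifies $b_1=0$ by hand; the paper's route trades all of that for a one-line $3$-manifold construction plus a short $\SL_2(\ZZ)$ computation, which is more transparent in dimension three but does not generalize to the higher-$n$ setting of Proposition~\ref{prop_infrasolcom}.
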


\begin{proof}

We give a topological construction (see \cite{Hatcher_3D}):
let $N$ be the non-trivial I-bundle over the Klein bottle,
so that $\pl N=\TT^2$. Then for any mapping class $B\in\SL_2(\ZZ)$ of $\TT^2$ the gluing $M=N\cup_\phi N$ has $b_1(M)=0$ or is Seifert; in the former case it is a sol-manifold and is doubly covered by the torus bundle with holonomy $A=SB^{-1}SB$ where $S$ is the symmetry $(x,y)\mapsto (-x,y)$. In this way we get all $A$s of the form
$$
A = \begin{pmatrix} b & 2a \\ 2c & b \end{pmatrix}
$$
where $a,b,c\in\ZZ$: this follows from a direct computation. On the other hand it is easy to see that every hyperbolic conjugacy class in $\SL_2(\ZZ)$ contains a matrix with both diagonal coefficients equal (send a geodesic line orthogonal to its axis to $(0,\infty)$) and then we can take the square or cube to ensure that the off-diagonal coefficients are even. 
\end{proof}



\section{Computational aspects}

\subsection{Finding ravels}
\label{computation_ravel}
Given a group $\Gamma$ it is a substantial problem to decide whether or not the group is diffuse. 
To a certain degree this problem is vulnerable to a computational approach which will be explained in this section.

For all the following algorithms we suppose that we have a way of solving the word problem in a given group $\Gamma$;
in practice we used computations with matrices to do this. We will not make reference to the group $\Gamma$ in the algorithms. 

The first algorithm determines, given a finite subset $A$ of $\Gamma$ and an element $a\in A$, whether $a$ is extremal in $A$ or not.

\begin{algorithm}[H]
\caption{Given $a\in A \subset \Gamma$, determines if $a$ is extremal in $A$}
\begin{algorithmic}[1] 
\Function{IsExtremal}{$a,A$}
  \State $B = A\setminus\{a\}$
  \ForAll{$b\in B$}
    \If{$ab^{-1}a\in A$}
      \Return False 
                   \Comment{If $b=ga$ and $g^{-1}a = aba^{-1}\in A$ then $a$ is not extremal.}
    \EndIf
  \EndFor
  \State \Return True 
\EndFunction
\end{algorithmic}
\end{algorithm}

The following algorithm returns the largest ravel contained in $A$ by succesively removing extremal points. If $A$ contains no ravel, then it returns the empty set. Of course, the algorithm is not able to decide if a ravel exists at all (hence is of no use to prove that a group is diffuse). 

\begin{algorithm}[H]
\label{algo}
\caption{Given $A \subset \Gamma$, finds the largest ravel contained in $A$}
\begin{algorithmic}[1] 
\Function{FindRavel}{$A$}
    \ForAll{$a \in A$} 
      \If{IsExtremal($a,A$) = True}
        \Return FindRavel($A \setminus \{a\}$)
      \EndIf
    \EndFor
  \State \Return A  
                 \Comment{No extremal point was found in $A$, so $A$ is a ravel or empty}
\EndFunction
\end{algorithmic}
\end{algorithm}

Finally, it may be of interest to determine mininal ravels; the following algorithm, starting from a ravel $A$, finds a minimal one contained in $A$ (note that the result may depend on the order on which the elements of $A$ are looped over). 

\begin{algorithm}[H]
\caption{Given a ravel $A \subset \Gamma$, finds a minimal ravel contained in $A$}
\begin{algorithmic}[1]
\Function{MinRavel}{$A$}
  \ForAll{$a\in A$}
    \State B = FindRavel($A\setminus\{a\}$)
    \If{$B\not=\emptyset$}
      \Return{MinRavel($B$)}
    \EndIf
  \EndFor
\State \Return $A$
\EndFunction
\end{algorithmic}
\end{algorithm}

To prove Proposition \ref{Weeks_nd} we ran (with two different implementations  in Magma \cite{Magma}  and in Sage/Python \cite{sage}) the algorithms to test diffuseness on the group with presentation 
$$
\langle a,b| a^2b^2a^2b^{-1}ab^{-1},\, a^2b^2a^{-1}ba^{-1}b^2\rangle,
$$ 
which is the fundamental group of the Weeks manifold, the hyperbolic three--manifold of smallest volume. We actually used the representation to $\SL_2(\CC)$ given in the proof of Proposition 3.2 in \cite{CFJR}:
$$
a = \begin{pmatrix} x & 1 \\ 0 & x^{-1} \end{pmatrix},\quad b = \begin{pmatrix} x & 0 \\ 2 - (x + x^{-1}) & x^{-1} \end{pmatrix}
$$
where 
$$
x^6 + 2x^4 - x^3 + 2x^2 + 1 = 0. 
$$
It turns out that the word metric ball of radius four in the generators $a,b$ contains a ravel of cardinality 141
(further computation showed that the latter contains a minimal ravel of cardinality 23). 

\subsection{Implementation}
\subsubsection{SAGE}

The Sage implementation of the algorithm (for linear groups) can be found as the file \texttt{algo\char`_ diffuse.py} in \cite{ancillary}. 
It has to be run in a Sage environment, and the main function is \texttt{max\char`_ diff}, which takes as input a pair $(S,{\rm M})$ where $\rm M$ is a Sage MatrixSpace object, and $S$ a collection of invertible matrices in $\rm M$. Its output is the (possibly empty) maximal ravel contained in $S$. The file also contains the function \texttt{ball}, which inputs a triple $(r,{\rm gens},{\rm M})$ which computes the ball of radius $r$ in the group generated by the set ${\rm gens}$ of invertible matrices in ${\rm M}$ (in the word metric associated to ${\rm gens}$). The file \texttt{Weeks.sage} in \cite{ancillary} can be run directly in a Sage environment and outputs a ravel of cardinality 141 in the Weeks manifold group. 

\subsubsection{MAGMA}
An implementation for the MAGMA computer algebra system can be found as \texttt{diffuse\char`_ MAGMA} in \cite{ancillary}. 
It includes functions \texttt{findRavel}, \texttt{findMinRavel} and a procedure \texttt{BallWeeks} to generate a ball of given radius in the Weeks manifold group.
To compute a ravel in the Weeks manifold group run the following lines
\begin{verbatim}
   |> B := BallWeeks(4);
   |> findRavel(B);
\end{verbatim}


\appendix

\section{A diffuse group which is not left-orderable\\ by Nathan M. Dunfield}
\label{appendix}
\markleft{{\small{\sc{Nathan Dunfield}}}}
\noindent
This appendix is devoted to the proof of

\begin{theorem}\label{thm:hypmain}
  Let $N$ be the closed orientable hyperbolic \3-manifold defined
  below.  Then $\pi_1(N)$ is diffuse but not left-orderable.
\end{theorem}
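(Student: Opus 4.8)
The plan is to describe $N$ explicitly and then establish the two assertions by finite, computer-assisted verifications: the failure of left-orderability via an exhaustive constraint search, and diffuseness via the refined metric criterion of Proposition \ref{bowditch_mieux}. For the description, I will present $N$ as a specific finite cover occurring in one of the towers of cyclic covers from \cite[\S 6]{CalegariDunfield2006}, recording a triangulation, the homology $H_1(N;\ZZ)$, a finite generating set for $\pi_1(N)$, and a discrete faithful representation $\rho\colon\pi_1(N)\to\SL_2(\CC)$ (defined over the invariant trace field). That $N$ is a closed orientable hyperbolic $3$-manifold will be certified by a rigorous (interval-arithmetic) solution of the gluing equations for the triangulation; Mostow rigidity then identifies $\pi_1(N)$ with $\rho(\pi_1(N))$ and supplies an effective solution of the word problem.

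\textbf{Not left-orderable.} A group $G$ is left-orderable precisely when it admits a \emph{positive cone}: a sub-semigroup $P$ with $G=P\sqcup\{1\}\sqcup P^{-1}$, so that for nontrivial $g$ one has $g\in P\iff g^{-1}\notin P$. I will run the search of \cite{CalegariDunfield2006}: fix a large symmetric ball $S\subset\pi_1(N)$ in the chosen generators, and attempt to extend, by backtracking, a partial assignment of each nontrivial $g\in S$ to "$g\in P$'' or "$g\in P^{-1}$'', propagating the constraints imposed by products that fall back inside $S$ and by the relation $g\in P\iff g^{-1}\notin P$. Each branch either survives or collapses (some $g$ is forced into $P\cap P^{-1}$, or $1\in P$), where membership of a word in $S$ is decided through $\rho$. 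If the program verifies that, for a sufficiently large $S$, every branch collapses, then $\pi_1(N)$ admits no positive cone and hence is not left-orderable; the program's output is the required finite certificate.

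\textbf{Diffuse.} Since $N$ is closed and hyperbolic, $\pi_1(N)$ acts freely on $\HH^3_\RR$ with every nontrivial element axial. By the Corollary to Proposition \ref{bowditch_mieux} it suffices to check that every $\gamma\in\pi_1(N)\setminus\{1\}$ satisfies $\min(\gamma)\ge\arcosh(1+r_\gamma)$; writing the eigenvalue of $\rho(\gamma)$ as $e^{\ell/2}e^{i\theta/2}$, this is the inequality $\cosh(\ell)\ge 1+\sqrt{2-2\cos\theta}$ of the remark following Proposition \ref{bowditch_mieux}. As $r_\gamma\le 2$ and $\arcosh(3)=2\log(1+\sqrt{2})$, the inequality is automatic whenever $\min(\gamma)\ge 2\log(1+\sqrt{2})$, so by Lemma \ref{fin_geod} only the finitely many conjugacy classes with translation length below $2\log(1+\sqrt{2})$ need to be examined. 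I will enumerate these short geodesics (a verified length-spectrum computation), read off $\ell$ and $\theta$ from $\tr\rho(\gamma)$, and check the displayed inequality for each; if all pass, the Corollary gives that $\pi_1(N)$ is diffuse, which (combined with the previous step) proves the theorem.

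\textbf{Main obstacle.} The argument is conceptually light; its weight lies in the non-left-orderability search, whose branching can be enormous, so that $S$ must be chosen judiciously and one must invoke precisely the correctness statement of \cite{CalegariDunfield2006}—that exhaustion of the branches for a single fixed finite $S$ already proves non-left-orderability. The diffuseness half is comparatively routine once the short part of the length spectrum is in hand; the only care needed there is to make the numerical steps (hyperbolic structure, word problem, short geodesics, traces) rigorous.
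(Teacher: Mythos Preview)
Your plan is sound and matches the paper's strategy for non--left-orderability: the paper also produces a finite certificate (an explicit case tree in seven elements, found by the search of \cite{CaDu}) showing that no positive cone can exist.

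For diffuseness the paper takes a different route. Rather than a verified numerical length-spectrum computation together with Proposition~\ref{bowditch_mieux}, the paper exploits the \emph{arithmetic} description of $N$: since $\Gamma$ lies in a congruence subgroup of level $\pi^3$ in a quaternion algebra over the cubic field $K$ of discriminant $-31$, one knows $\tr(\gamma)-2\equiv 0\bmod\pi^3$ and $|\tau_\RR(\tr\gamma)|\le 2$ for all $\gamma\in\Gamma$. A short Minkowski--geometry-of-numbers enumeration in $\mathcal O_K$ then lists all possible traces of elements with translation length below a given bound and shows outright that the systole of $N$ is $\approx 1.802 > 2\log(1+\sqrt 2)$. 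Thus Bowditch's original criterion applies and Proposition~\ref{bowditch_mieux} is never invoked. Your approach would reach the same conclusion---the enumeration of geodesics below $2\log(1+\sqrt 2)$ would simply be empty---but the arithmetic argument gives an exact-integer certificate independent of interval arithmetic on the hyperbolic structure, which is what the arithmetic description of $N$ buys. Conversely, your method has the advantage of not requiring $N$ to be arithmetic.

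One small aside: your parenthetical that $b_1=0$ is ``necessary for $\pi_1$ to be non--left-orderable'' is not correct in general; it is merely a natural place to look, and does not enter the proof.
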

\noindent
This example was found by searching through the towers of finite
covers of hyperbolic \3-manifolds studied in \cite[\S
6]{CalegariDunfield2006}.  There, each manifold has $b_1 = 0$ (which
is necessary for $\pi_1$ to be non--left-orderable) and the length
of the systole goes to infinity (so that we can apply Bowditch's
criterion for diffuseness).  We begin by giving two 
descriptions of $N$, one purely arithmetic and the other purely
topological.

\subsection{Arithmetic description}  \label{sec:arithmetic}

Throughout this section, a good reference for arithmetic hyperbolic
\3-manifolds is \cite{MaclachlanReid2003}. Let $K = \Q(\a)$ be the
number field where $\a^3 + \a - 1 = 0$; this is the unique cubic field
with discriminant $-31$.  It has one real embedding and one pair of
complex embeddings; our convention is that the complex place
corresponds to $\a \approx -0.3411639 + 1.1615414 i$.  Its integer
ring $\OK$ has unique factorization, so we will not distinguish
between prime elements and prime ideals of $\OK$.  The unique prime of
norm 3 in $\OK$ is $\pi = \a + 1$, and let $D$ be the quaternion
algebra over $K$ ramified at exactly $\pi$ and the real place of $K$.
Concretely, we can take $D$ to be generated by $i$ and $j$ where
$i^2 = -1$, $j^2 = -3$ and $k = i j = -j i$.  The manifold $N$ will be
the congruence arithmetic hyperbolic \3-manifold associated to $D$ and
the level $\pi^3$, whose detailed construction we now give.

Let $\OD$ be a maximal order in $D$; this is unique up to conjugation
by \cite[Example 6.7.9(3)]{MaclachlanReid2003}.  Let $\OD^1$ denote
the elements of $\OD$ of (reduced) norm 1.  At the complex place of
$K$, the algebra $\C \otimes_K D$ is just the matrix algebra
$M_2(\C)$.  Let $\Lambda$ be the subgroup of
$\PSL_2(\C) \cong \Isom^+{\H^3}$ which is the image of $\OD^1$ under
the induced map $D^1 \to \SL_2(\C) \to \PSL_2(\C)$.  Since $D$ is a
division algebra, $\Lambda$ is a cocompact lattice.  Let $K_\pi$ be
the $\pi$-adic completion of $K$, which is isomorphic to $\Q_3$.  Let
$D_\pi = K_\pi \otimes_K D$, which is the unique quaternion division
algebra over $K_\pi$ \cite[\S 2.6]{MaclachlanReid2003}.  Define
$w \maps D_\pi \to \Z$ by $w = \nu \circ n$ where
$\nu \maps K_\pi \to \Z$ is the (logarithmic) valuation and
$n \maps D_\pi \to K_{\pi}$ is the norm function.  Then
$\Opi = \setdef{u \in D_\pi}{w(u) \geq 0}$ is the valuation ring of
$D_\pi$ and $\cQ = \setdef{u \in D_\pi}{w(u) \geq 1}$ is the maximal
two-sided ideal in $\Opi$ (compare \cite[\S 6.4]{MaclachlanReid2003}).
Define $\Gamma$ to be the image of $\OD^1 \cap \left(1 + \cQ^3\right)$
in $\PSL_2(\C)$, and let $N$ be the associated hyperbolic orbifold
$\hypquo{\Gamma}$. 

We claim that $\Gamma$ is torsion-free and hence $N$ is a
manifold.  First note that
$\cQ^n = \setdef{u \in D_\pi}{w(u) \geq n}$.  Now for
$\gamma \in \Gamma$, we have $\gamma = 1 + q$ for $q \in \cQ^3$; from
$n(\gamma) = 1$ we get that $\tr(\gamma) - 2 = -n(q)$ and thus
$\tr(\gamma) - 2 \in \pi^3$ since $w(q) \geq 3$.  If $\gamma$ has
finite order, then $\tr(\gamma) = \xi + \xi^{-1}$ where $\xi$ is a
root of unity.  Since $\tr(\gamma) \in \OK$, it would have to be one
of $\{-1, 0, 1\}$ and none of those are $2 \bmod
\pi^3$.  So $N$ is a manifold.

\subsection{Topological description} Let $M$ be the hyperbolic
\3-manifold $m007(3,2)$ from the Hodgson-Weeks census
\cite{HodgsonWeeks1994}; alternatively, $M$ is the $(-9/2, -3/2)$ Dehn
surgery on the Whitehead link $L$, where $+1$ surgery on $L$ yields
the figure-8 knot rather than the trefoil.  Then $\vol(M) \approx
1.58316666$ and $H_1(M; \Z) = \Z/3\Z \oplus \Z/9\Z$.  Let $N'$ be
the regular cover of $M$ corresponding to any epimorphism
$\pi_1(M) \to (\Z/3\Z)^2$; thus $\vol(N') \approx 14.24849994$.   We
will show:

\begin{prop}\label{prop:sameN}
  The hyperbolic manifolds $N$ and $N'$ are isometric.
\end{prop}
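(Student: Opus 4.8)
The tool is Mostow--Prasad rigidity: since $N$ and $N'$ are both closed hyperbolic $3$-manifolds, it suffices to show that $\pi_1(N)$ and $\pi_1(N')$ are conjugate as subgroups of $\PSL_2(\CC)$, or even merely abstractly isomorphic. The plan is to use arithmeticity to place $\pi_1(M)$ into the commensurator of the lattice $\Lambda$ of Section~\ref{sec:arithmetic}, and then to identify the degree-$9$ cover $N'\to M$ with the congruence subgroup $\Gamma$ by an explicit computation with matrices.

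First I would pin down the commensurability class of $M=m007(3,2)$. Running the standard algorithms for recognising arithmetic Kleinian groups (as implemented in Snap) one checks that $M$ is arithmetic, with invariant trace field $K=\QQ(\a)$ where $\a^3+\a-1=0$, and with invariant quaternion algebra ramified at exactly the real place of $K$ and the prime $\pi=\a+1$; this is the algebra $D$ of Section~\ref{sec:arithmetic}. Consequently $\pi_1(M)$, and hence the subgroup $\pi_1(N')$, is commensurable with $\Lambda$.

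Next comes the heart of the matter: matching the cover with the congruence level. Using the explicit matrix generators of $\pi_1(N')$ supplied by SnapPea, one checks directly that $\pi_1(N')$ is conjugate into $\OD^1$ and that the conjugate reduces to the identity modulo $\cQ^3$; thus $\pi_1(N')$ is conjugate to a subgroup of $\Gamma$. A covolume comparison then yields equality: on the topological side $\vol(N')=9\,\vol(m007(3,2))$, while $\vol(\hypquo{\Gamma})$ equals $[\Lambda:\Gamma]$ times the Borel covolume of $\hypquo{\Lambda}$, with $[\Lambda:\Gamma]$ governed by the local structure of $D_\pi$ over $K_\pi\cong\QQ_3$ (whose valuation ring has residue field $\FF_9$); these two numbers agree, so $[\Gamma:\pi_1(N')]=1$. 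One also needs the cover $N'$ to be unambiguous: since $H_1(M;\ZZ)=\ZZ/3\ZZ\oplus\ZZ/9\ZZ$ has a \emph{unique} quotient isomorphic to $(\ZZ/3\ZZ)^2$ --- namely the one whose kernel is the order-$3$ subgroup $0\oplus 3\ZZ/9\ZZ$ --- every epimorphism $\pi_1(M)\to(\ZZ/3\ZZ)^2$ defines the same cover. Making this arithmetic-to-topology dictionary completely explicit is the step I expect to be the main obstacle, and in practice it is carried out by machine.

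Putting this together, $\pi_1(N')$ is conjugate to $\Gamma$ in $\PSL_2(\CC)$, so $N=\hypquo{\Gamma}$ and $N'$ are isometric; the numerical coincidences $\vol(N)\approx\vol(N')\approx 14.24849994$ and the agreement of the first homology groups serve as consistency checks. As an alternative route that bypasses the level computation altogether, one could run the comparison through certified numerics: realise $N$ from the arithmetic data as a triangulated hyperbolic manifold, realise $N'$ as the degree-$9$ cover of $m007(3,2)$ in SnapPea, and verify rigorously that the two triangulations have the same isometry signature --- this folds the appeal to Mostow rigidity into the computation.
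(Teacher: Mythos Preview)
Your approach is essentially the one the paper takes: identify $\pi_1(M)$ arithmetically (invariant trace field $K$, quaternion algebra $D$), exhibit $\pi_1(N')$ inside $\Gamma$ by an explicit congruence check, and conclude equality by comparing indices/volumes. The paper's implementation differs from your sketch only in that it writes down explicit quaternions $\abar,\bbar\in\OD^1$ for generators of $\pi_1(M)$ (verified by matching traces against Snap's rigorous data), so that generators of $\pi_1(N')$ are already exact elements of $\OD^1$ and one simply computes $w(g-1)=3$; this sidesteps the awkward step you flag of conjugating floating-point SnapPea matrices into a maximal order. One small slip: the valuation ring of $K_\pi\cong\QQ_3$ has residue field $\FF_3$, not $\FF_9$; it is $\Opi/\cQ$ in $D_\pi$ that is $\FF_9$.
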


\begin{proof}
  We give a detailed outline, but many steps are best checked by
  rigorous computation; complete Sage \cite{sage} source code for this
  is available at \cite{ancillary}.  From a triangulation for the
  alternate topological description of $M$ as $m036(3, -1)$, Snap
  \cite{Snap, CoulsonGoodmanHodgsonNeumann2000} gives the group
  presentation
  \begin{equation}\label{eq:pi1M}
    \pi_1(M) = \spandef{a, b \vphantom{\big|}}{\rel{aaBaabbAbb} = 1, \ \rel{abbAbAAbAbb} = 1}
  \end{equation}
  where $A = a^{-1}$ and $B = b^{-1}$.  Moreover, Snap rigorously
  checks that $M$ is hyperbolic and that the holonomy representation
  $\pi_1(M) \to \PSL_2(\C)$ lifts to
  $\rho \maps \pi_1(M) \to \SL_2(\C)$ which is characterized (up to
  conjugacy) by
  $\tr\left(\rho(a)\right) = \tr\left(\rho(b) \right) = \a^2 + 1$ and
  $\tr\left(\rho(ab)\right) = \a$. 

  An $\OK$ basis for $\OD$ can be taken to be $\{1, i, x, y\}$ where
  $x = (i + j)/2$ and $y = (3\pi + 3\pi^2 i + \pi^2 j + \pi k)/6$.  If
  we define
  \begin{equation}\label{eq:quat}
  \abar = 1 + \a i + \a x +  (\a-1) y  \mtext{and} \bbar = -i \cdot
  \abar \cdot i
  \end{equation}
  then computing the norms and traces of
  $\big\{ \abar, \, \bbar, \, \abar \cdot \bbar \, \big\}$ and evaluating the relations
  in (\ref{eq:pi1M}) shows that $a \mapsto \abar$ and
  $b \mapsto \bbar$ gives a homomorphism
  $\pi_1(M) \to \OD^1 \leq \SL_2(\C)$ which is a conjugate of $\rho$.
  Henceforth, we identify $\pi_1(M)$ with the subgroup of $\OD^1$
  generated by $\big\{ \abar, \bbar \, \big\}$.

  Now, GAP or Magma \cite{GAP, Magma} easily checks that $\pi_1\left(N'\right)$
  is generated by
  \[
  \left\{\defgen{c}{a^3}, \defgen{d}{b^3}, \defgen{e}{baBA},
  \defgen{f}{bABa}\right\}
  \]
  with defining relators:
  \begin{align*}
    &\rel{DefDeceFdFcFe}  &\rel{DeceDecDCEfCEfCfDf} \\ 
    &\rel{ECEdFcDfDeceDeccFec} &\rel{fCfDecdcFecfDeceDec}
  \end{align*}
  To see that $\pi_1\left(N'\right) \leq \Gamma$, one just checks that
  $w(g - 1) = 3$ for $g$ in $\{c, d, e, f\}$ to confirm that each is
  in $1 + \cQ^3$.  By the volume formula
  \cite[Thm~11.1.3]{MaclachlanReid2003},
  $\vol(\hypquo{\Lambda}) \approx 0.26386111$ and hence
  $\left[\Lambda : \pi_1\left(N'\right)\right] = 54$.  On the other
  hand, one can calculate $[\Lambda : \Gamma]$ exactly as in the proof
  of Theorem~1.4 of \cite{CalegariDunfield2006}; while the number
  field in that example is $\Q(\sqrt{-2})$, in both examples
  $K_\pi \cong \Q_3$ and hence have isomorphic $D_\pi$.  Since it
  turns out that $[\Lambda : \Gamma]$ is also $54$, we have
  $\pi_1\left(N'\right) = \Gamma$ as claimed.
\end{proof}

Theorem~\ref{thm:hypmain} follows immediately from the following two
lemmas, whose proofs are independent of one another. 
\begin{lemma}\label{lem:systole}
  Let $N$ be the closed hyperbolic \3-manifold defined above.  Then
  $N$ has systole $\approx 1.80203613 > 2 \log\left(1 + \sqrt 2\right)$.  In
  particular, $\pi_1(N)$ is diffuse. 
\end{lemma}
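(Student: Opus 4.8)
The plan is to bound the systole of $N$ from below by combining the arithmetic description of $\Gamma$ with an elementary trace computation, and then to invoke Bowditch's criterion. Since $N$ is a closed hyperbolic $3$-manifold its injectivity radius equals half its systole, so it is enough to show that every closed geodesic of $N$ has length strictly greater than $2\log(1+\sqrt2)$; diffuseness of $\pi_1(N)=\Gamma$ then follows from fact (iv) in Section~\ref{surv_Bowditch} (or, equivalently, from the Corollary following Proposition~\ref{bowditch_mieux}).

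First I would record the standard dictionary between geodesic lengths and traces. If $\gamma$ acts on $\HH^3$ as a loxodromic isometry with translation length $\ell_\gamma$ and rotation $\theta_\gamma$, and $t=\tr(\gamma)=2\cosh\bigl((\ell_\gamma+i\theta_\gamma)/2\bigr)$ for a lift to $\SL_2(\CC)$, then a direct computation gives
\[\cosh(\ell_\gamma)=\tfrac14\bigl(|t|^2+|t^2-4|\bigr).\]
Since $\cosh\bigl(2\log(1+\sqrt2)\bigr)=3$, the closed geodesic of $\gamma$ has length at most $2\log(1+\sqrt2)$ precisely when $|t|^2+|t^2-4|\le 12$ (and this condition is independent of the choice of lift). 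Thus the lemma reduces to the assertion that $|\tr(\gamma)|^2+|\tr(\gamma)^2-4|>12$ for every $\gamma\in\Gamma\setminus\{1\}$.

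Next I would bring in the arithmetic of Section~\ref{sec:arithmetic}. The reduced trace $\tr(\gamma)$ lies in $\OK$; its image under the complex place of $K$ is the quantity $t$ above, while its image under the real place lies in $[-2,2]$ because $D$ is ramified at the real place, so that $\gamma$ maps into the compact group of norm-one Hamilton quaternions. The level condition defining $\Gamma$ forces $\tr(\gamma)\equiv 2\pmod{\pi^3}$ (exactly as in the argument that $N$ is a manifold), and $\tr(\gamma)=2$ would make $\gamma$ parabolic or trivial, hence trivial since $\Gamma$ is cocompact and torsion-free. Now the set of $x\in\OK$ whose real conjugate lies in $[-2,2]$ and with $|x|^2+|x^2-4|\le 12$ at the complex place is the set of points of the lattice $\OK$ inside a bounded region of $\RR\times\CC$, hence finite and explicitly enumerable; imposing $x\equiv 2\pmod{\pi^3}$ cuts this down to a short list.

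The main obstacle is the last step: the crude estimate coming only from $|N_{K/\QQ}(\tr(\gamma)-2)|\ge N(\pi)^3=27$ together with the bound $4$ at the real place is not by itself sharp enough to clear the threshold $12$, so one must verify directly that none of the finitely many candidate traces $x\neq 2$ on the list is actually realized by an element of $\Gamma$. For this I would use the explicit generating set $\{c,d,e,f\}$ of $\Gamma=\pi_1(N)$ from Proposition~\ref{prop:sameN}: a closed geodesic of length $\le 2\log(1+\sqrt2)$ is represented by a word of bounded length in these generators, so a finite enumeration --- or, equivalently, the rigorous length-spectrum computation of Snap, which works exactly in the trace field $K$ --- disposes of the remaining candidates and identifies the systole as the algebraic number $\approx 1.80203613$. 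Since $1.80203613>1.76275\ldots=2\log(1+\sqrt2)$, the injectivity radius of $N$ exceeds $\log(1+\sqrt2)$, and both assertions of the lemma follow.
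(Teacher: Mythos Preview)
Your outline agrees with the paper through the key reduction: translation length depends only on the trace, $\tr(\gamma)\in\OK$ lies in $[-2,2]$ at the real place because $D$ ramifies there, and $\tr(\gamma)\equiv 2\pmod{\pi^3}$ by the level condition. Up to this point your argument and the paper's are the same.

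The gap is in what comes next. You correctly note that the crude norm estimate $|N_{K/\QQ}(\tr\gamma-2)|\ge 27$ is not by itself enough, but then you pivot to a realizability check: enumerate candidate traces, then rule out each by searching words of bounded length in the generators (or by invoking Snap). That step is not justified as written --- you give no effective bound on the word length representing a geodesic of length $\le 2\log(1+\sqrt2)$, and producing one would require control of the diameter of a fundamental domain, which you do not establish. Citing Snap defers the whole computation to a black box.

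More to the point, the detour is unnecessary. The paper simply \emph{carries out} the lattice enumeration in Minkowski space $K_\RR=\RR\times\CC$. With the generous bound $|\tau_\CC(t)|\le 4$ (which holds whenever $\min(\gamma)\le 2.5$), the set
\[
\cT=\bigl\{\,t\in\OK:\;|\tau_\RR(t)|\le 2,\ |\tau_\CC(t)|\le 4,\ t\equiv 2\pmod{\pi^3}\,\bigr\}
\]
sits inside a ball of radius $6$ in $K_\RR$, and an elementary Gram-matrix estimate reduces the search to $925$ integer triples. The result is $\cT=\{2,\ \alpha^2-\alpha,\ -2\alpha^2+\alpha-1\}$, and the two nontrivial traces give translation lengths $\approx 1.8020$ and $\approx 2.3325$, both already above $2\log(1+\sqrt2)\approx 1.7627$. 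Exhibiting a single element ($ec=\mathit{baBaa}$) with trace $\alpha^2-\alpha$ pins down the systole exactly. So the finite enumeration of traces is itself conclusive; there is nothing to ``rule out'' on the short side, and your proposed realizability step --- besides lacking a justified word-length bound --- is aimed at the wrong target.
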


\begin{lemma}\label{lem:nonorder}
  Let $N$ be the closed hyperbolic \3-manifold defined above.  Then
  $\pi_1(N)$ is not left-orderable. 
\end{lemma}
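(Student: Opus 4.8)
The plan is to produce an explicit finite obstruction to the existence of a left-invariant order on $\Gamma = \pi_1(N)$, following the certificate-search method of Calegari and Dunfield (see \cite{CaDu}, \cite{CalegariDunfield2006}); this is the algorithm alluded to in the introduction. Recall that $\Gamma$ is left-orderable if and only if it possesses a \emph{positive cone}, i.e.\ a subset $P\subseteq\Gamma$ with $P\cdot P\subseteq P$ and $\Gamma = P\sqcup\{1\}\sqcup P^{-1}$; specifying such a $P$ amounts to assigning a sign to every nontrivial element in a way compatible with the semigroup law. I would work with a fixed finite presentation of $\Gamma$ — for instance the four-generator presentation in $c,d,e,f$ obtained in the proof of Proposition~\ref{prop:sameN} — and use that the word problem in $\Gamma$ is solvable, $\Gamma$ being both word-hyperbolic and explicitly embedded in $\PSL_2(\CC)$ via $\cO_D^1$. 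Note that left-orderability does not pass to or from subgroups or quotients in any way that would simplify matters, so the search has to be run on $\Gamma$ itself; what makes it feasible is that $b_1(N)=0$, so that a hypothetical positive cone cannot arise from a homomorphism to $\RR$ and the obstruction has a chance of being purely combinatorial.

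First I would build the case tree. Reversing an order and inverting a generator are both harmless, so we may declare one fixed generator to be positive; for each remaining generator we branch over its two possible signs. Along any branch we maintain a finite set $P_0$ of words known to be positive, close it up under products (reducing words via the relators) out to some bounded length, and test for an inconsistency — a word $w$ with both $w$ and $w^{-1}$ forced into the closure of $P_0$, the case $w=1$ being the typical one. If every branch reaches such a contradiction at bounded depth, then no positive cone can exist and $\Gamma$ is not left-orderable. The outcome of the search is a finite decorated tree whose nodes record assumptions of the form ``this element lies in $P$'' and whose leaves exhibit a product of assumed-positive elements that equals $1$ in $\Gamma$; I would display this tree explicitly. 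Once written down, it is checked by a purely mechanical finite computation: form the indicated products and verify, using the word problem in $\Gamma$, that the claimed relations hold.

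The main obstacle is that there is no a priori bound on the depth or width of this search: for a randomly chosen closed hyperbolic $3$-manifold with $b_1=0$ the fundamental group is typically either left-orderable or resistant to producing a small certificate, and there is no algorithm guaranteed to terminate. The point of the construction in Section~\ref{sec:arithmetic} is precisely that $N$ is located by running this search through the tower of covers of \cite[\S6]{CalegariDunfield2006} and happens to admit a terminating case tree of manageable size. So the genuinely hard part is not a conceptual step but the empirical fact that the search halts for this $\Gamma$; once it does, the lemma follows by inspecting the resulting finite certificate.
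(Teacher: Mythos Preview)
Your approach is exactly the paper's: the proof there is a computer-found certificate of non--left-orderability in the style of \cite[\S 8]{CaDu}, presented as an explicit case tree over the signs of seven auxiliary elements $g,h,n,d,c,m,v$ (and occasionally $f$), with each leaf exhibiting a specific word in the assumed-positive elements that equals $1$ in $\Gamma$.

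The gap in your proposal is that you describe the method but do not actually produce the certificate. As you yourself note, the content of the proof \emph{is} the finite decorated tree together with the list of words verified to be trivial in $\Gamma$; without it, what remains is a plan rather than a proof. There is no a~priori reason the search terminates for this particular $\Gamma$, so the lemma does not follow from the description of the algorithm alone---one must exhibit the actual branching and the 23 relations that kill every branch. The paper does exactly this (introducing the specific elements $g=\mathit{aBABB}$, $h=\mathit{abbAb}$, etc., and listing for each leaf the word whose triviality is then checked via the explicit quaternion representation \eqref{eq:quat}). To complete your argument you would need to run the search and record the resulting tree; at that point the verification is, as you say, a mechanical computation in $\cO_D^1$.
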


\begin{proof}[Proof of Lemma~\ref{lem:systole}]
  We will show that the shortest geodesics in $N$ correspond to
  elements $\gamma \in \Gamma = \pi_1(N)$ with $\tr(\gamma) = \a^2
  - \a$; one such element is $ec = \mathit{baBaa}$.  Since the
  translation length of $\gamma$ is given by
  \begin{equation}
    \min(\gamma) =  T\left(\tr(\gamma)\right) \mtext{where $T(z) =\real\big( 2 \arcosh(z/2)\big)$}
  \end{equation}
  the systole will thus have length $\approx 1.8020361$.  The
  conclusion that $\Gamma$ is diffuse follows immediately from
  Bowditch's criterion (iv) quoted above in Section 2.1.

  We will use the Minkowski geometry of numbers picture (see
  e.g.~\cite[\S I.5]{Neukirch1999}) to determine the possible traces
  of elements of $\Gamma$ with short translation lengths.  Let
  $\tauC \maps K \to \C$ be the preferred complex embedding and
  $\tauR \maps K \to \R$ be the real embedding.  We have the usual
  embedding from $K$ into the Minkowski space $K_\R = \R \times \C$
  given by $\iota = \tauR \times \tauC$, and the key fact is that
  $\iota( \OK )$ is a lattice in $K_\R$.   Thus the following set is finite:
  \[
  \cT = \setdef{t \in \OK \vphantom{\big|} }{
    \mtext{$\abs{\tauR(t)}\leq 2$, 
      $\abs{\tauC(t)} \leq 4$, and $t - 2 \equiv 0 \bmod \pi^3$}}
  \]
  We next show that $\cT$ contains $\tr(\gamma)$ for any
  $\gamma \in \Gamma$ with $\min(\gamma) \leq 2.5$.  That
  $\abs{\tauR(\tr \gamma)} \leq 2$ follows since $\Gamma$ is
  arithmetic: the quaternion algebra $D$ ramifies at the real place
  and so $D^1$ becomes $\mathrm{SU}_2$ there.  To see that
  $\min(\gamma) \leq 2.5$ implies $\abs{\tauC(t)} \leq 4$, note that
  $T(z)$ is minimized for fixed $\abs{z}$ on the real axis and that
  $T(4) < 2.6339$.

  To complete the proof of the lemma, we will show that
  $\cT = \{ 2, \a^2 - \a, \ -2\a^2 + \a - 1 \}$, which suffices since
  $T(-2\a^2 + \a - 1) \approx 2.33248166$.  The natural inner product
  on $K_\R$ is such that
  $\abs{ \iota(k)}^2 = \abs{\tauR(k)}^2 + 2 \abs{\tauC(k)}^2$ for all
  $k \in K$.  Hence any element of $\cT$ has norm $\leq 6$, and our
  strategy is to enumerate all elements of $\iota(\OK)$ to that norm
  and check which are in $\cT$.  A $\Z$-basis for $\OK$ is
  $\{1, \a, \a^2\}$, and the Gram matrix in that basis for the inner
  product on $K_\R$ has smallest eigenvalue $\approx 1.534033$.
  Regarding $\Z^3$ as having the standard norm from $\R^3$, this says
  that the natural map $\Z^3 \to \iota(\OK)$ is distance
  nondecreasing.  Hence every element of $\cT$ has the form $c_0 +
  c_1 \a + c_2 a^2$ where $c_i \in \Z$ with $c_1^2 + c_2^2 + c_3^2
  \leq 36$.  Computing $\cT$ is now a simple enumeration of the 
  925 such triples $(c_1, c_2, c_3)$.  See \cite{ancillary} for a short program
  which does this.  
\end{proof}

Turning to the proof of Lemma~\ref{lem:nonorder}, you will quickly see
that it was discovered by computer, using the method of \cite[\S
8]{CaDu}.  Verifying its correctness is a matter of
checking that 23 different elements in $\Gamma$ are the identity,
which can be easily done using the explicit quaternions given in
(\ref{eq:quat}); sample code for this is provided with \cite{ancillary}.

\begin{proof}[Proof of Lemma~\ref{lem:nonorder}]

  Assume $\Gamma$ is left-orderable and consider the positive cone
  $P = \setdef{\gamma \in \Gamma}{\gamma > 1}$.  We define some
  additional elements of $\Gamma$ by 
  \[
  \defgen{g}{aBABB} \quad
  \defgen{h}{abbAb} \quad
  \defgen{n}{aBBAB} \quad
  \defgen{m}{aBaab} \quad
  \defgen{v}{ABAAb}
  \]
  By symmetry, we can assume $g \in P$. We now try all the
  possibilities for whether the elements $\{g,h,n,d,c,m,v\}$ are in
  $P$ or not, in each case leading to the contradiction that $1 \in P$.
    
\pfassume{1}{h}
\pfassume{2}{n}
\pfassume{3}{d}
\pfassume{4}{c}
\pfassumefinal{5}{m}
\pfcontradiction{6}{\rel{cgndhmgmcmdhmchm}}
\pfassumefinal{5}{M}
\pfcontradiction{6}{\rel{MgndhdMgndhdMgnMhdMndMdMgndhdMgnMhdMgn}}
\pfassumefinal{4}{C}
\pfcontradiction{5}{\rel{hCggnhCgChCgd}}
\pfassume{3}{D}
\pfassume{4}{c}
\pfassumefinal{5}{m}
\pfcontradiction{6}{\rel{mcDnDmcDnDmcmnDmhDmDmcDnDmcmnDmc}}
\pfassumefinal{5}{M}
\pfcontradiction{6}{\rel{gnMDnMgnnMgncDnMg}}
\pfassumefinal{4}{C}
\pfcontradiction{5}{\rel{ChDnhCgDnhCggnnhCggnhCg}}
\pfassume{2}{N}
\pfassumefinal{3}{d}
\pfcontradiction{4}{\rel{NdhNgdNdhhNgdNdhdNdhNgdNgdNdhdNdhNgdN}}
\pfassume{3}{D}
\pfassume{4}{c}
\pfassumefinal{5}{m}
\pfcontradiction{6}{\rel{DmchNgmgmDNm}}
\pfassume{5}{M}
\pfassumefinal{6}{f}
\pfcontradiction{7}{\rel{hNcNfhMhNcNfhNhNfDf}}
\pfassumefinal{6}{F}
\pfcontradiction{7}{\rel{NhNcFMhNhNcFMhNcMhNcFcF}}
\pfassume{4}{C}
\pfassume{5}{v}
\pfassumefinal{6}{f}
\pfcontradiction{7}{\rel{ChCgChNfhf}}
\pfassumefinal{6}{F}
\pfcontradiction{7}{\rel{CvFhCgvhCgvFhCgCh}}
\pfassumefinal{5}{V}
\pfcontradiction{6}{\rel{ChCgChNhCgChVChChNhCgChhChNhCgChVChVhhChNhCgChVChV}}
\pfassume{1}{H}
\pfassume{2}{n}
\pfassumefinal{3}{d}
\pfcontradiction{4}{\rel{nHggdHnHgnnHggnnHggdHnHgndgnnHggnnHggdHnHgnn}}
\pfassume{3}{D}
\pfassumefinal{4}{c}
\pfcontradiction{5}{\rel{DnHcHDHgnnHcHDHDnnHcHDHDnDH}}
\pfassumefinal{4}{C}
\pfcontradiction{5}{\rel{DHCnHgnnHggnnHgnHCnHgnnCnHCnHgnn}}
\pfassume{2}{N}
\pfassume{3}{d}
\pfassumefinal{4}{c}
\pfcontradiction{5}{\rel{NNgdHcNgdHggdHgdNNgdHcNgdHggNgdHcNgdNNgdHcd}}
\pfassumefinal{4}{C}
\pfcontradiction{5}{\rel{NgdHggdCgdNNgdCgNgdHggdCgdHggdCgdNd}}
\pfassume{3}{D}
\pfassume{4}{c}
\pfassumefinal{5}{m}
\pfcontradiction{6}{\rel{NcHcHDNmDHDcHcHDNmHcHDNcHccHD}}
\pfassume{5}{M}
\pfassumefinal{6}{v}
\pfcontradiction{7}{\rel{HcHDMvHcHDNcHHcHDNcHcv}}
\pfassumefinal{6}{V}
\pfcontradiction{7}{\rel{DcVcHDNcHcHDcVcHDHVcHDH}}
\pfassume{4}{C}
\pfassumefinal{5}{m}
\pfcontradiction{6}{\rel{DmgmDNmDNgmgmDNmDHDDmgmDNmDHHgmDNm}}
\pfassumefinal{5}{M}
\pfcontradiction{6}{\rel{HDHCMHCMHDMCDHCMHDMHDMCMHCMHDM}}
\end{proof}


\bibliography{bib}{}
\bibliographystyle{amsplain}

\end{document}